\tikzstyle{block} = [rectangle,draw,text width=10em,text centered,rounded corners,minimum height=4em]
\tikzstyle{line} = [draw, -latex']
\DeclareFontFamily{OMX}{MnSymbolE}{}
\DeclareSymbolFont{MnLargeSymbols}{OMX}{MnSymbolE}{m}{n}
\DeclareFontShape{OMX}{MnSymbolE}{m}{n}{
    <-6>  MnSymbolE5
   <6-7>  MnSymbolE6
   <7-8>  MnSymbolE7
   <8-9>  MnSymbolE8
   <9-10> MnSymbolE9
  <10-12> MnSymbolE10
  <12->   MnSymbolE12
}{}
\DeclareFontShape{OMX}{MnSymbolE}{b}{n}{
    <-6>  MnSymbolE-Bold5
   <6-7>  MnSymbolE-Bold6
   <7-8>  MnSymbolE-Bold7
   <8-9>  MnSymbolE-Bold8
   <9-10> MnSymbolE-Bold9
  <10-12> MnSymbolE-Bold10
  <12->   MnSymbolE-Bold12
}{}
\let\llangle\@undefined
\let\rrangle\@undefined
\DeclareMathDelimiter{\llangle}{\mathopen}%
                     {MnLargeSymbols}{'164}{MnLargeSymbols}{'164}
\DeclareMathDelimiter{\rrangle}{\mathclose}%
                     {MnLargeSymbols}{'171}{MnLargeSymbols}{'171}
\newtheorem{defn}{Definition}[section]
\newtheorem{thm}[defn]{Theorem}
\newtheorem{cor}[defn]{Corollary}
\newtheorem{remark}[defn]{Remark}
\newtheorem{prop}[defn]{Proposition}
\def\wh{\widehat}
\def\wt{\widetilde}
\def\bw{\overline{w}}
\def\bW{\overline{W}}
\renewcommand{\ast}[1]{#1^*}
\renewcommand{\b}[1]{#1^*}
\renewcommand{\overline}[1]{#1^*}
\def\leq{\leqslant}
\def\geq{\geqslant}
\def\i{{\sf i}}
\def\ie{{\it i.e.}\/}
\def\cf{{\it cf.}\/}
\def\re{\color{red}}
\colorlet{lgray}{white!70!black}
\colorlet{lred}{white!85!red}
\def\hl{spin Hall--Littlewood\ }
\def\Hl{Spin Hall--Littlewood\ }
\def\wh{spin $q$--Whittaker\ }
\def\Wh{Spin $q$--Whittaker\ }
\def\F{{\sf F}}
\def\G{{\sf G}}
\def\Gc{\G^{*}}
\def\c{{\sf c}}
\newcommand{\bra}[1]{\langle #1|}
\newcommand{\bbra}[1]{\llangle #1|}
\newcommand{\ket}[1]{|#1\rangle}
\newcommand{\kett}[1]{|#1\rrangle}
\renewcommand{\vert}[5]{
\begin{gathered}
\begin{tikzpicture}[scale=#5,baseline=(current bounding box.center)]
\draw[lgray,thick] (-1,0) -- (1,0);
\draw[lgray,line width=5pt] (0,-1) -- (0,1);
\node[left] at (-0.8,0) {\tiny $#2$};\node[right] at (0.8,0) {\tiny $#4$};
\node[below] at (0,-0.8) {\tiny $#1$};\node[above] at (0,0.8) {\tiny $#3$};
\end{tikzpicture}
\end{gathered}
}
\newcommand{\fvert}[5]{
\begin{gathered}
\begin{tikzpicture}[scale=#5,baseline=(current bounding box.center)]
\draw[lgray,line width=5pt] (-1,0) -- (1,0);
\draw[lgray,line width=5pt] (0,-1) -- (0,1);
\node[left] at (-0.8,0) {\tiny $#2$};\node[right] at (0.8,0) {\tiny $#4$};
\node[below] at (0,-0.8) {\tiny $#1$};\node[above] at (0,0.8) {\tiny $#3$};
\end{tikzpicture}
\end{gathered}
}
\tikzset{->-/.style={decoration={markings,
mark=at position #1 with {\arrow{>}}},postaction={decorate}}}
\title[Spin $q$--Whittaker polynomials]{Spin $q$--Whittaker polynomials}
\author{Alexei Borodin}
\address[Alexei Borodin]{ Department of Mathematics, MIT, Cambridge, USA, and
Institute for Information Transmission Problems, Moscow, Russia. E-mail: borodin@math.mit.edu }
\author{Michael Wheeler}
\address[Michael Wheeler]{ School of Mathematics and Statistics, The University of Melbourne, Parkville,
Victoria 3010, Australia. E-mail: wheelerm@unimelb.edu.au}
\begin{document}

\begin{abstract} We introduce and study a one-parameter generalization of the 
$q$--Whittaker symmetric functions. This is a family of multivariate symmetric 
polynomials, whose construction may be viewed as an application of the 
procedure of fusion from integrable lattice models to a vertex model 
interpretation of a one-parameter generalization of Hall--Littlewood polynomials 
from \cite{Borodin, BorodinP1, BorodinP2}.

We prove branching and Pieri rules, standard and dual (skew) Cauchy
summation identities, and an integral representation for the new polynomials.
\end{abstract}

\maketitle

\setcounter{tocdepth}{1}
\tableofcontents

\section{Introduction}

\subsection{Background}

The $q$-deformed class one $\mathfrak{gl}_{n}$ Whittaker functions, or {\it 
$q$--Whittaker functions} for short, were defined in \cite{GLO1} as a special 
class of joint eigenfuctions of the $q$-deformed Toda chain Hamiltonians 
\cite{Etingof,Ruijsenaars} with the support in the positive Weyl chamber. In 
the limit $q \rightarrow 1$ they reduce to classical $\mathfrak{gl}_{n}$ 
Whittaker functions \cite{GLO2}, while for general $q$ they are themselves the 
limiting case of a more general family of symmetric functions, the Macdonald 
polynomials \cite{Macdonald}. In the last decade they have come to play a 
prominent role in integrable probability via the $q$--Whittaker processes 
\cite{BorodinC, BPlectures}. These are a rather general class of stochastic 
processes, which not only degenerate at $q \rightarrow 1$ to the Whittaker 
processes of O'Connell \cite{O'Connell} used to describe random directed 
polymers but, in a different direction, to the continuous time $q$-deformed 
totally asymmetric simple exclusion process ($q$--TASEP). For a survey 
of these properties, and their connection with KPZ (Kardar--Parisi--Zhang) 
universality, we refer the reader to \cite[Chapters 3--5]{BorodinC}, 
\cite{BPlectures}.

A somewhat perpendicular approach to KPZ-type observables has recently been 
proposed in \cite{BorodinP1,BorodinP2}, in the setting of a higher spin version 
of the six-vertex model from statistical mechanics \cite{Baxter}. The chief 
object of study in these works has been the {\it height function} of the vertex 
model, a random (non-negative, integer-valued) variable which lives on the 
vertices of the lattice. Once again, the theory of symmetric functions turns out 
to be indispensable to the calculations: a family of symmetric, rational 
functions and their associated orthogonality relations play a pivotal role in 
writing exact integral expressions for the $q$-moments of the height function. 
These rational functions were originally introduced in \cite{Borodin} as a 
one-parameter deformation of the {\it Hall--Littlewood functions} 
\cite{Macdonald} (which are yet another family encompassed by the more general 
Macdonald polynomials). In this work we refer to the former rational functions 
as {\it spin Hall--Littlewood functions,} in reference to the spin parameter 
$s$ which they carry.

Even more recently, some direct equivalences between expectations of 
observables in Macdonald processes (and their degenerations) and those in higher 
spin vertex models have been remarked \cite{Borodin2}, see also 
\cite{OrrPetrov}. While these equivalences (once guessed) can be readily proved 
by comparing integral formulae, their origin remains fairly mysterious. A more 
conceptual version of the equivalence \cite{Borodin2}, albeit in the limit to 
Hall--Littlewood processes, has now been exposed \cite{BorodinBW}. A natural 
angle towards better understanding these equivalences is to search for a family 
of symmetric functions which unifies all of the classes listed above, and to 
study in detail the properties of such a family. This remains beyond the scope 
of the present work, although we mention that promising steps in this direction 
have been made in \cite{GarbaliGW}, where a mutual generalization of Macdonald 
polynomials and \hl functions was explicitly constructed.

In fact, rather than unifying the known families, in this paper we will add one more family to the list: these we term {\it spin $q$--Whittaker polynomials.} The name is chosen to reflect the fact that they are a one-parameter, $s$-deformation of $q$--Whittaker polynomials; they degenerate to the latter at $s=0$. This means that the \wh polynomials have a similar footing in the general theory as do the \hl functions, but now on the $t=0$ side of the Macdonald ``coin'' (whereas the \hl functions lie on the $q=0$ side). We refer the reader to Figure \ref{fig:landscape} for a survey of the symmetric function landscape.

\begin{figure}
\resizebox{14cm}{6cm}{
\begin{tikzpicture}[node distance = 4.5cm,auto,>=stealth]
\node [block] (M) {{\bf Macdonald} \\ $(q,t)$};
\node [block,below left of=M] (H) {{\bf Hall--Littlewood} \\ $(t)$};
\node [block,below right of=M] (W) {{\bf $q$--Whittaker} \\ $(q)$};
\node[block,above left of=H] (sH) {{\bf Spin} \\ {\bf Hall--Littlewood} \\ $(s,t)$};
\node[block,above right of=W] (sW) {{\bf Spin} \\ {\bf $q$--Whittaker} \\ $(s,q)$};
\path [line] (M) -- node[left] {$q=0$} (H);
\path [line] (M) -- node[right] {\ $t=0$} (W);
\path [line] (sH) -- node[left] {$s=0$} (H);
\path [line] (sW) -- node[right] {$s=0$} (W);
\draw [bend left,->] (sH) to node[above] {fusion + $(t \mapsto q)$} (sW);
\end{tikzpicture}
}
\caption{A table of symmetric functions, indicating their parameter-dependence.}
\label{fig:landscape}
\end{figure}
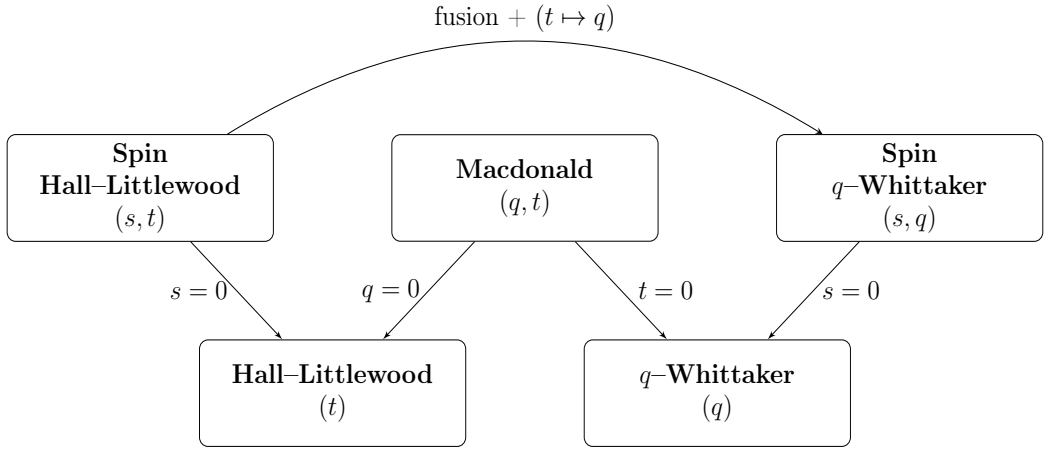

This analogy between the \hl functions and \wh polynomials is not accidental: one can construct the latter directly from the former, using the technique of {\it fusion} in integrable lattice models \cite{KulishRS,KirillovR}. To see how this can transpire, consider the right hand side of the Cauchy identity for Hall--Littlewood polynomials (here we use a parameter $q$, rather than the conventional $t$),
\begin{align*}
\Pi(u_1,u_2,\dots,u_m;v_1,v_2,\dots,v_n) := \prod_{i=1}^{m} \prod_{j=1}^{n}  
\frac{1-q u_i v_j}{1-u_i v_j}\,,
\end{align*}
and specialize one set of these variables to a geometric progression with ratio $q$, letting $(u_1,u_2,\dots,u_m) = 
(u,qu,\dots,q^{m-1}u)$. We obtain
\begin{align*}
\Pi(u,qu,\dots,q^{m-1}u;v_1,v_2,\dots,v_n) = \prod_{j=1}^{n}  \frac{1-q^m 
u v_j}{1-u v_j}\,.
\end{align*}
After analytic continuation in $q^m$ effected by sending $q^m u \mapsto -x$, and setting $u=0$, the product turns into 
$\prod_{j=1}^{n} (1+x v_j)$. This we recognize as the right hand side of the dual Cauchy identity between a Hall--Littlewood polynomial and a $q$--Whittaker polynomial (in one variable). On the other hand, such geometric specializations are known to be a key ingredient in fusion. It is therefore reasonable to suggest that $q$--Whittaker polynomials might be recovered by applying the fusion technique to the lattice model construction of Hall--Littlewood polynomials. This indeed turns out to be the case; surprisingly, we were unable to find a precise statement of this fact in the literature. 

In light of this observation, one can play the same game taking the \hl 
functions (and their lattice model construction, developed in 
\cite{Borodin,BorodinP1,BorodinP2}) as the starting point: doing so ultimately 
leads us to our definition of the \wh polynomials.

As is common in the theory of symmetric functions \cite{Macdonald}, we define 
skew \wh polynomials $\mathbb{F}_{\lambda/\nu}(x_1,\dots,x_m)$ parametrized by 
pairs of partitions $\lambda,\nu$, as well as non-skew ones 
$\mathbb{F}_{\lambda}(x_1,\dots,x_m)$ parametrized by single partitions 
$\lambda$; the latter correspond to taking $\nu=\varnothing$ in the former. 
Let us list a few of their properties that we prove below. 
\begin{itemize}[leftmargin=*]
\item $\mathbb{F}_{\lambda/\nu}(x_1,\dots,x_m)$ is a symmetric, {\it 
inhomogeneous} polynomial in $(x_1,\dots,x_m)$.

\medskip

\item For any pair of partitions $\lambda,\nu$ the following stability relation holds:
\begin{align*}
\mathbb{F}_{\lambda/\nu}(x_1,\dots,x_{m-1},-s)
=
\mathbb{F}_{\lambda/\nu}(x_1,\dots,x_{m-1}).
\end{align*}
At $s=0$ this recovers the usual stability property of $q$--Whittaker polynomials.

\medskip

\item 
The \wh polynomials satisfy a standard branching rule
$$
\mathbb{F}_{\lambda / \mu}(x_1,\dots,x_{m+n})
=
\sum_{\nu}
\mathbb{F}_{\nu / \mu}(x_1,\dots,x_m)
\mathbb{F}_{\lambda / \nu}(x_{m+1},\dots,x_{m+n}).
$$

\item The one-variable skew \wh polynomial is given by
\begin{align*}
\mathbb{F}_{\lambda/\nu}(x)
=
\left\{
\begin{array}{ll}
x^{|\lambda|-|\nu|}
\displaystyle{
\prod_{i \geq 1}
\frac{(-s/x;q)_{\lambda_i-\nu_i} (-sx;q)_{\nu_i-\lambda_{i+1}} (q;q)_{\lambda_i-\lambda_{i+1}}}
{(q;q)_{\lambda_i-\nu_i} (q;q)_{\nu_i-\lambda_{i+1}} (s^2;q)_{\lambda_i-\lambda_{i+1}}}
},
\qquad
&
\lambda \succ \nu,
\\ \\
0,
\qquad
&
{\rm otherwise.}
\end{array}
\right.
\end{align*}
Together with the branching rule, this yields a simple ``interlacing'' 
construction of the \wh polynomials (or equivalently, one in terms of 
Gelfand--Tsetlin patterns).

\medskip

\item Define the \emph{dual} \wh polynomials by 
$$
\mathbb{F}_{\lambda/\nu}^*(x_1,\dots,x_m)=\prod_{i\ge 1} 
\frac{(s^2;q)_{\lambda_i-\lambda_{i+1}}}{(q;q)_{\lambda_i-\lambda_{i+1}}}\frac{
(q;q)_{\nu_i-\nu_{i+1}}}{(s^2;q)_{\nu_i-\nu_{i+1}}}\cdot\mathbb{F}_{\lambda/\nu}
(x_1,\dots,x_m).
$$
Then the following skew Cauchy-type summation identities hold: For arbitrary 
partitions $\mu$ and $\nu$, one has
\begin{multline*}
\sum_{\lambda}
\mathbb{F}_{\lambda/\mu}(x_1,\dots,x_m)
\mathbb{F}^{*}_{\lambda/\nu}(y_1,\dots,y_n)
\\
=\prod_{i=1}^{m}
\prod_{j=1}^{n}
\frac{(-sx_i;q)_{\infty} (-sy_j;q)_{\infty}}
{(s^2;q)_{\infty} (x_i y_j;q)_{\infty}}
\sum_{\kappa}
\mathbb{F}_{\nu/\kappa}(x_1,\dots,x_m)
\mathbb{F}^{*}_{\mu/\kappa}(y_1,\dots,y_n).
\end{multline*}
For $\mu=\nu=\varnothing$, this turns into a Cauchy-type identity
\begin{align*}
\sum_{\lambda}
\mathbb{F}_{\lambda}(x_1,\dots,x_m)
\mathbb{F}_{\lambda}(y_1,\dots,y_n)
\prod_{i \geq 1}
\frac{(s^2;q)_{\lambda_i-\lambda_{i+1}}}
{(q;q)_{\lambda_i-\lambda_{i+1}}}
=
\prod_{i=1}^{m}
\prod_{j=1}^{n}
\frac{(-sx_i;q)_{\infty} (-sy_j;q)_{\infty}}
{(s^2;q)_{\infty} (x_i y_j;q)_{\infty}},
\end{align*}
which is a multivariate generalization of the $q$--Gauss summation theorem.

There are also dual (skew) Cauchy identities that combine \wh polynomials and 
stable \hl functions, see Section \ref{sec:dual-cauchy} below. 

\medskip 

\item The following integral representation holds:
\begin{multline*}
\mathbb{F}_{\lambda}(x_1,\dots,x_m)
\\=
\oint_{\mathcal{C}} \frac{du_1}{2\pi\i u_1}
\cdots
\oint_{\mathcal{C}} \frac{du_L}{2\pi\i u_L}
\prod_{1 \leq i<j \leq L}
\left(
\frac{u_i-u_j}{u_i-q u_j}
\right)
\prod_{i=1}^{L}
\left( 
\frac{1-su_i}{u_i-s}
\right)^{\lambda'_i}
\left(
\frac{\prod_{j=1}^{m}(1+u_i x_j)}{(1-s u_i)^{m+1}}
\right),
\end{multline*}
where $L=\lambda_1$ denotes the largest part of $\lambda$ and the contour 
$\mathcal{C}$ is as in Figure \ref{fig:contour} below. 
\end{itemize}
 
The vertex model interpretation of the \hl functions makes it natural to add 
countably many inhomogeneity parameters to their definition; most of their 
properties remain intact \cite{BorodinP1}. One can do the same for the \wh 
polynomials; the branching rule remains the same, and in the formula for the 
one-variable specialization above one needs to add index $i$ to the spin
parameter $s$. This preserves the (skew) dual Cauchy identities (that need to 
involve stable \hl functions with the same inhomogeneities), but appears to 
destroy the ordinary (skew) Cauchy identities. There is also an integral 
representation that generalizes the one above. We decided to leave the 
inhomogeneous case out of this work in order not to cloud the arguments
with more involved notation.

\subsection{Layout of paper}

In Section \ref{sec:vertex_model} we recall the basic features of the 
integrable higher spin vertex model studied in 
\cite{Borodin,BorodinP1,BorodinP2}, and then in Section \ref{sec:spin_hl_funct} 
we use it to define the \hl rational functions as lattice model partition 
functions. In Section 
\ref{sec:fusion} we gather a number of results on the fusion procedure as 
applied to the vertex model of Section \ref{sec:vertex_model}, leading to the 
construction of integrable Boltzmann weights in equation \eqref{fused_wt} that 
form the foundation of the remainder of the paper. We remark that, at $s=0$, 
these weights degenerate into those used by Korff in Sections 3 and 6 of \cite{Korff}. 
This is expected, since \cite{Korff} contains an integrable lattice construction 
of the $q$--Whittaker polynomials which is exactly the $s=0$ specialization of 
our results here. 

In Section \ref{sec:wh_poly} we apply the fusion procedure to the \hl functions, and use the resulting partition functions to define \wh polynomials. Along the way, we also define a {\it stable} version of the \hl functions, which are later paired with the \wh polynomials in dual Cauchy summation identities. We formulate the \wh polynomials algebraically using certain monodromy matrix operators in the model \eqref{fused_wt}, and derive key commutation relations between these operators. In Section \ref{sec:combin} we use the lattice construction of the \wh polynomials to derive their branching rules, and show that they reduce to (ordinary) $q$--Whittaker polynomials by specializing $s=0$. Section \ref{sec:cauchy_pieri} contains a list of various Cauchy, dual Cauchy and Pieri identities which the \wh polynomials and the stable \hl functions satisfy; these are all proven by means of the commutation relations derived in Section \ref{sec:wh_poly}. We conclude, in Section \ref{sec:integral}, with a multiple 
integral expression for the the \wh polynomials.

\subsection{Notation}

Throughout the paper we use a number of partition-related terminologies, which we summarize below. These are all standard in the combinatorics literature, with one exception: we include parts of size zero in our partitions, rather than the usual practice of truncating a partition after its last positive part.

A {\it partition} $\lambda$ is a finite non-increasing sequence of non-negative integers $\lambda_1 \geq \cdots \geq \lambda_{\ell} \geq 0$, where $\lambda_i$ is called a {\it part} of $\lambda$ for all $1 \leq i \leq \ell$. The empty partition $\varnothing$ is the trivial sequence consisting of no parts. The {\it length} of a partition $\lambda$ is the number of parts which comprise it, and denoted by $\ell(\lambda)$. We let ${\rm Part}_{\ell}$ denote the set $\{\lambda_1 \geq \cdots \geq \lambda_{\ell} \geq 0\}$ of all partitions of length $\ell$. Similarly, ${\rm Part}^{+}_{\ell}$ will denote the set $\{\lambda_1 \geq \cdots \geq \lambda_m \geq 1\}_{0 \leq m \leq \ell}$ of all partitions with purely positive parts, whose length is bounded by $\ell$. It is sometimes convenient to write a partition in terms of its {\it part-multiplicities} $m_i(\lambda) = \#\{j:\lambda_j = i\}$, \ie\ by writing $\lambda = 0^{m_0} 1^{m_1} 2^{m_2} \dots$ where $m_i \equiv m_i(\lambda)$. The {\it conjugate} of a partition $\lambda$, denoted $\lambda'$, is the partition with parts $\lambda'_i = \#\{j:\lambda_j \geq i\}$. 

For two positive partitions $\lambda, \mu$ we write $\lambda \supset \mu$ if $\ell(\lambda) \geq \ell(\mu)$ and the inequality $\lambda_i \geq \mu_i$ holds for all $1 \leq i \leq \ell(\mu)$. Similarly, we write $\lambda \succ \mu$ and say that $\lambda$ {\it interlaces} $\mu$ if $0 \leq \ell(\lambda)-\ell(\mu) \leq 1$ and $\lambda_i \geq \mu_i \geq \lambda_{i+1}$ for all $1 \leq i \leq \ell(\mu)$ (where the final inequality $\mu_{\ell(\mu)} \geq \lambda_{\ell(\mu)+1}$ is omitted in the case $\ell(\lambda) = \ell(\mu)$).

We make frequent use of $q$--Pochhammer symbols, which are defined as follows:
\begin{align*}
(a;q)_m
=
\left\{
\begin{array}{ll}
\prod_{1 \leq i \leq m}
(1-aq^{i-1}),
& \quad
m > 0,
\\
\\
1,
& \quad
m = 0,
\\
\\
\prod_{1 \leq i \leq -m}
(1-aq^{-i})^{-1},
& \quad
m < 0.
\end{array}
\right.
\end{align*}
We will also tacitly assume that $|q|<1$ so that, in particular, $(a;q)_{\infty}$ makes sense. Most of our equations depend on another parameter $s$; we will also assume that $|s|<1$, since this prevents $s$ from taking values which would lead to divergences because of vanishing denominators.

\subsection{Acknowledgments}

A.~B. is supported by the National Science Foundation grant DMS-1607901 and by 
Fellowships of the Radcliffe Institute for Advanced Study and the Simons 
Foundation.  
M.~W. is supported by the Australian Research Council grant DE160100958.

\section{Higher spin vertex model}
\label{sec:vertex_model}

\subsection{Vertex weights and Yang--Baxter equation}

Following Section 2 of \cite{BorodinP2}, we define a vertex model consisting of SW $\rightarrow$ NE oriented paths on a square grid. Horizontal edges of the grid can be occupied by at most one lattice path, but no restriction is placed on the number of paths which traverse a vertical edge. Every intersection of horizontal and vertical gridlines constitutes a vertex, and each vertex is assigned a Boltzmann weight that depends on the local configuration of lattice paths about that intersection. Assuming conservation of lattice paths through a vertex, four types of vertex are possible. We indicate these vertices and their explicit weights below:
\begin{align}
\label{vertices}
\begin{array}{cccc}
\begin{tikzpicture}[scale=0.6,>=stealth]
\draw[lgray,thick] (-1,0) node[left,black] {$0$} -- (1,0) node[right,black] {$0$};
\draw[lgray,line width=5pt] (0,-1) -- (0,1);
\node[below] at (0,-1) {$g$};
\draw[thick,->] (-0.075,-1) -- (-0.075,1);
\draw[thick,->] (0.075,-1) -- (0.075,1);
\node[above] at (0,1) {$g$};
\end{tikzpicture}
\quad\quad\quad
&
\begin{tikzpicture}[scale=0.6,>=stealth]
\draw[lgray,thick] (-1,0) node[left,black] {$0$} -- (1,0) node[right,black] {$1$};
\draw[lgray,line width=5pt] (0,-1) -- (0,1);
\node[below] at (0,-1) {$g+1$};
\draw[thick,->] (-0.15,-1) -- (-0.15,1);
\draw[thick,->] (0,-1) -- (0,1);
\draw[thick,->] (0.15,-1) -- (0.15,0) -- (1,0);
\node[above] at (0,1) {$g$};
\end{tikzpicture}
\quad\quad\quad
&
\begin{tikzpicture}[scale=0.6,>=stealth]
\draw[lgray,thick] (-1,0) node[left,black] {$1$} -- (1,0) node[right,black] {$0$};
\draw[lgray,line width=5pt] (0,-1) -- (0,1);
\node[below] at (0,-1) {$g$};
\draw[thick,->] (-1,0) -- (-0.15,0) -- (-0.15,1);
\draw[thick,->] (0,-1) -- (0,1);
\draw[thick,->] (0.15,-1) -- (0.15,1);
\node[above] at (0,1) {$g+1$};
\end{tikzpicture}
\quad\quad\quad
&
\begin{tikzpicture}[scale=0.6,>=stealth]
\draw[lgray,thick] (-1,0) node[left,black] {$1$} -- (1,0) node[right,black] {$1$};
\draw[lgray,line width=5pt] (0,-1) -- (0,1);
\node[below] at (0,-1) {$g$};
\draw[thick,->] (-1,0) -- (-0.15,0) -- (-0.15,1);
\draw[thick,->] (0,-1) -- (0,1);
\draw[thick,->] (0.15,-1) -- (0.15,0) -- (1,0);
\node[above] at (0,1) {$g$};
\end{tikzpicture}
\\
w_u(g,0;g,0)
\quad\quad\quad
&
w_u(g+1,0;g,1)
\quad\quad\quad
&
w_u(g,1;g+1,0)
\quad\quad\quad
&
w_u(g,1;g,1)
\\ \\
\dfrac{1-s q^g u}{1-su}
\quad\quad\quad
&
\dfrac{(1-s^2 q^g) u}{1-su}
\quad\quad\quad
&
\dfrac{1-q^{g+1}}{1-su}
\quad\quad\quad
&
\dfrac{u-sq^g}{1-su}
\end{array}
\end{align}
where $g$ is any non-negative integer (representing the number of paths which sit at a vertical edge), $s$ and $q$ are fixed global parameters of the model, and $u$ is a local variable called the {\it spectral parameter.} We denote the Boltzmann weight of a vertex in two equivalent ways, and interchange between the two according to convenience:
\begin{align}
\label{boltz}
w_u
\left(
\begin{gathered}
\begin{tikzpicture}[scale=0.4,baseline=(current bounding box.center)]
\draw[lgray,thick] (-1,0) -- (1,0);
\draw[lgray,line width=5pt] (0,-1) -- (0,1);
\node[left] at (-0.8,0) {\tiny $j$};\node[right] at (0.8,0) {\tiny $\ell$};
\node[below] at (0,-0.8) {\tiny $i$};\node[above] at (0,0.8) {\tiny $k$};
\end{tikzpicture}
\end{gathered}
\right)
\equiv
w_u(i,j;k,\ell),
\quad
i,k \in \mathbb{Z}_{\geq 0},
\quad
0\leq j,\ell \leq 1.
\end{align}
It is conventional to relax the constraint of lattice path conservation through each vertex, which can be done by extending the Boltzmann weights \eqref{boltz} to all values of $i,j,k,\ell$ and assuming that $w_u(i,j;k,\ell) = 0$ for all $i+j \not= k + \ell$. The common factor $1-su$ in the denominator of each vertex \eqref{vertices} is to ensure that the empty vertex has weight $1$, \ie\ $w_u\left(0,0;0,0\right)=1$.  

Define an $n$-vertex by concatenating $n$ vertices vertically, with summation assumed over all internal vertical edges:
\begin{align}
\label{n-vert}
w_{\{u_1,\dots,u_n\}}
\left(
\begin{gathered}
\begin{tikzpicture}[scale=0.6,baseline=(current bounding box.center)]
\draw[lgray,line width=5pt] (0,-1) -- (0,3);
\foreach\y in {0,...,2}{
\draw[lgray,thick] (-1,\y) -- (1,\y);
}
\node[left] at (-1,0) {$j_1$};
\node[left] at (-1.3,1.2) {$\vdots$};
\node[left] at (-1,2) {$j_n$};
\node[right] at (1,0) {$\ell_1$};
\node[right] at (1.3,1.2) {$\vdots$};
\node[right] at (1,2) {$\ell_n$};
\node[below] at (0,-1) {$i$};
\node[above] at (0,3) {$k$};
\end{tikzpicture}
\end{gathered}
\right)
\equiv
w_{\{u_1,\dots,u_n\}}
\Big(i,\{j_1,\dots,j_n\} ; k,\{\ell_1,\dots,\ell_n\}\Big)
\end{align}
It is easily seen that the weight of an $n$-vertex \eqref{n-vert} is always factorized into weights of individual vertices \eqref{vertices}: knowing three of the edge states surrounding a vertex determines the fourth by lattice path conservation, and this constrains each of the internal edges in \eqref{n-vert} to assume a unique value (or it causes the whole weight to vanish if $i + j_1 + \cdots + j_n \not= k + \ell_1 + \cdots + \ell_n$, when conservation is impossible).

\begin{defn}{\rm
Let $W = {\rm Span}\{\ket{j}\}_{0 \leq j \leq 1} \cong \mathbb{C}^2$ be a two-dimensional vector space, and for all $1 \leq i \leq n$ let $W_i$ denote a copy of $W$. The {\it $n$-vertex operator} $\mathcal{W}_{\{u_1,\dots,u_n\}}(i;k)$ acts linearly on $W_1\otimes \cdots \otimes W_n$ as follows:
\begin{multline*}
\mathcal{W}_{\{u_1,\dots,u_n\}}(i;k)
:
\ket{\ell_1}_1 \otimes \cdots \otimes \ket{\ell_n}_n
\\
\mapsto
\sum_{0 \leq j_1,\dots,j_n \leq 1}
w_{\{u_1,\dots,u_n\}}
\Big(i,\{j_1,\dots,j_n\};k,\{\ell_1,\dots,\ell_n\}\Big)
\ket{j_1}_1 \otimes \cdots \otimes \ket{j_n}_n.
\end{multline*}
This in fact defines an infinite family of operators, since $i$ and $k$ can be any 
non-negative integers.
}
\end{defn}

\begin{prop}{\rm
Let $\mathcal{W}_{\{u_1,u_2\}}(i;k) \in {\rm End}(W_1 \otimes W_2)$ be a $2$-vertex operator as defined above, with $i,k \in \mathbb{Z}_{\geq0}$. The {\it Yang--Baxter equation} holds:
\begin{align}
\label{yb-eqn}
\mathcal{P} \circ
\mathcal{R}(u_2/u_1) 
\circ
\mathcal{W}_{\{u_1,u_2\}}(i;k)
=
\mathcal{W}_{\{u_2,u_1\}}(i;k) \circ \mathcal{P} \circ
\mathcal{R}(u_2/u_1),
\end{align}
where the $\mathcal{R}$-matrix is given by
\begin{align}
\label{R-mat}
\mathcal{R}(u)
=
\begin{pmatrix}
1-qu & 0 & 0 & 0
\\
0 & q(1-u) & 1-q & 0
\\
0 & (1-q)u & 1-u & 0
\\
0 & 0 & 0 & 1-qu
\end{pmatrix}
\in 
{\rm End}(W_1 \otimes W_2),
\end{align}
and $\mathcal{P} \in {\rm End}(W_1 \otimes W_2)$ is the permutation operator, with action $\mathcal{P}: \ket{a}_1 \otimes \ket{b}_2 \mapsto \ket{b}_1 \otimes \ket{a}_2$ for all vectors $\ket{a},\ket{b} \in W$.
}
\end{prop}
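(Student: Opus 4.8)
The plan is to verify the Yang--Baxter relation \eqref{yb-eqn} componentwise, reducing it to a finite list of scalar identities. Both sides are linear operators on the four-dimensional space $W_1\otimes W_2$, and their matrix entries are rational functions of $u_1,u_2$ (with parameters $q,s$). Fix the four horizontal boundary states and the integers $i,k\in\mathbb{Z}_{\geq 0}$. Because $w_u(a,b;c,d)=0$ unless $a+b=c+d$, and $\mathcal{R}(u)$ in \eqref{R-mat} enjoys the same conservation law, path conservation pins every internal edge of either composite diagram to an admissible value or else kills the term; hence each side collapses to a sum of a bounded number of products of weights from \eqref{vertices} and \eqref{R-mat}, and it suffices to compare these explicit rational functions.

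I would organize the cases by the horizontal boundary data. When conservation forces $\mathcal{R}$ to act on the diagonal subspace $\mathrm{Span}\{\ket{0}\ket{0},\ket{1}\ket{1}\}$, the relevant entry of $\mathcal{R}(u)$ is the scalar $1-qu$, $\mathcal{P}$ merely relabels $W_1\leftrightarrow W_2$, and \eqref{yb-eqn} reduces to the statement that a product of two vertex weights sharing an internal vertical edge is invariant under simultaneously swapping the two spectral parameters and the two horizontal spaces --- immediate from \eqref{vertices} once the forced internal states are solved for. The substantive cases are those in which the central block $\left(\begin{smallmatrix} q(1-u) & 1-q \\ (1-q)u & 1-u\end{smallmatrix}\right)$ of $\mathcal{R}(u)$ contributes: then each side of \eqref{yb-eqn} is a genuine two-term sum, and the required equalities amount to a short list of rational identities in $u_1/u_2$ (with coefficients in $q$, $s$ and the single monomial $q^{g}$, $g$ being the occupation of the internal vertical edge). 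Each is checked by clearing the denominators $1-su_1$, $1-su_2$, $1-qu_2/u_1$ and matching polynomials; the dependence on the vertical state enters only through $q^{g}$, so the identities hold uniformly in $g\geq 0$.

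Within this scheme the only real difficulty is the substantive cases just described, and there the obstacle is organizational rather than conceptual: correctly tracking which of the two vertices composing $\mathcal{W}_{\{u_1,u_2\}}$ carries $u_1$ and which carries $u_2$, the placement of $\mathcal{P}$ (this is what turns $\mathcal{W}_{\{u_1,u_2\}}$ into $\mathcal{W}_{\{u_2,u_1\}}$ on the right-hand side), and the directions of the forced internal paths, so that the two composite diagrams are compared under consistent conventions. Once the bookkeeping is fixed, the mixed identities are a bounded computation. Two shortcuts are worth recording. Taking the matrix element $\bra{k}\cdot\ket{i}$ in the vertical space shows \eqref{yb-eqn} is equivalent to the fundamental $RLL$ relation for the single-row $L$-operator built from \eqref{vertices}, which is exactly the integrability statement established for these weights in \cite{BorodinP2}, whose conventions we follow; equivalently, $\mathcal{R}(u)$ is, up to normalization and change of spectral variable, the trigonometric $U_q(\widehat{\mathfrak{sl}}_2)$ $R$-matrix on $\mathbb{C}^2\otimes\mathbb{C}^2$ and $w_u$ is an associated highest-weight $L$-operator, so \eqref{yb-eqn} is an instance of the standard Yang--Baxter equation. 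We use the direct verification above to keep the discussion self-contained.
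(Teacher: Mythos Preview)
Your proposal is correct and matches the paper's approach: the paper's proof simply says ``By direct computation, using the vertex weights \eqref{vertices} and the explicit realization of the permutation operator,'' writes down the $4\times 4$ matrix for $\mathcal{P}$, and then points to the $U_q(\widehat{\mathfrak{sl}_2})$ derivation in \cite{Borodin} as an alternative. You have supplied exactly the organizational detail (conservation pinning internal edges, the diagonal vs.\ middle-block case split, the uniform dependence on $q^g$) that the paper omits, and you also record the same two shortcuts --- the $RLL$ reading and the quantum-group origin --- that the paper alludes to.
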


\begin{proof}
By direct computation, using the vertex weights \eqref{vertices} and the explicit realization of the permutation operator,
\begin{align*}
\mathcal{P}
=
\begin{pmatrix}
1 & 0 & 0 & 0
\\
0 & 0 & 1 & 0
\\
0 & 1 & 0 & 0
\\
0 & 0 & 0 & 1
\end{pmatrix}
\in 
{\rm End}(W_1 \otimes W_2).
\end{align*}
See also Section 2 of \cite{Borodin} for the derivation of \eqref{yb-eqn} from the $U_q(\widehat{\mathfrak{sl}_2})$ Yang--Baxter equation in the tensor product $W_1 \otimes W_2 \otimes V_{I}$, where $V_I$ denotes a highest weight representation with weight $I$.
 
\end{proof}

\subsection{Dual vertex weights}
\label{sec:dual_wt}

We will at times adopt an alternative convention for the vertex weights \eqref{vertices}. The change in convention is brought about inverting the spectral parameter $u$ in \eqref{vertices}, then multiplying all vertices by $(u-s)/(1-s u)$. Since the Yang--Baxter equation \eqref{yb-eqn} is preserved under this transformation (up to inversion of $u_1$ and $u_2$), this does not damage the integrability of the model, although it will be essential to ensure that certain infinite partition functions which we study are well-defined. The alternative weights are as shown below:
\begin{align}
\label{bar_vertices}
\begin{array}{cccc}
\begin{tikzpicture}[scale=0.6,>=stealth]
\draw[lred,thick] (-1,0) node[left,black] {$0$} -- (1,0) node[right,black] {$0$};
\draw[lred,line width=5pt] (0,-1) -- (0,1);
\node[below] at (0,-1) {$g$};
\draw[thick,->] (-0.075,-1) -- (-0.075,1);
\draw[thick,->] (0.075,-1) -- (0.075,1);
\node[above] at (0,1) {$g$};
\end{tikzpicture}
\quad\quad\quad
&
\begin{tikzpicture}[scale=0.6,>=stealth]
\draw[lred,thick] (-1,0) node[left,black] {$0$} -- (1,0) node[right,black] {$1$};
\draw[lred,line width=5pt] (0,-1) -- (0,1);
\node[below] at (0,-1) {$g+1$};
\draw[thick,->] (-0.15,-1) -- (-0.15,1);
\draw[thick,->] (0,-1) -- (0,1);
\draw[thick,->] (0.15,-1) -- (0.15,0) -- (1,0);
\node[above] at (0,1) {$g$};
\end{tikzpicture}
\quad\quad\quad
&
\begin{tikzpicture}[scale=0.6,>=stealth]
\draw[lred,thick] (-1,0) node[left,black] {$1$} -- (1,0) node[right,black] {$0$};
\draw[lred,line width=5pt] (0,-1) -- (0,1);
\node[below] at (0,-1) {$g$};
\draw[thick,->] (-1,0) -- (-0.15,0) -- (-0.15,1);
\draw[thick,->] (0,-1) -- (0,1);
\draw[thick,->] (0.15,-1) -- (0.15,1);
\node[above] at (0,1) {$g+1$};
\end{tikzpicture}
\quad\quad\quad
&
\begin{tikzpicture}[scale=0.6,>=stealth]
\draw[lred,thick] (-1,0) node[left,black] {$1$} -- (1,0) node[right,black] {$1$};
\draw[lred,line width=5pt] (0,-1) -- (0,1);
\node[below] at (0,-1) {$g$};
\draw[thick,->] (-1,0) -- (-0.15,0) -- (-0.15,1);
\draw[thick,->] (0,-1) -- (0,1);
\draw[thick,->] (0.15,-1) -- (0.15,0) -- (1,0);
\node[above] at (0,1) {$g$};
\end{tikzpicture}
\\
\dfrac{u-s q^g}{1-su}
\quad\quad\quad
&
\dfrac{1-s^2 q^{g}}{1-su}
\quad\quad\quad
&
\dfrac{(1-q^{g+1})u}{1-su}
\quad\quad\quad
&
\dfrac{1-sq^g u}{1-su}
\end{array}
\end{align}
Graphically, we distinguish such vertices from their counterparts \eqref{vertices} by using a coloured background. 
If we complement the states that live on horizontal edges (\ie\ draw a path if the edge is unoccupied, delete a path if the edge is occupied), we find that the vertices \eqref{bar_vertices} are converted to 
\begin{align}
\label{red_vertices}
\begin{array}{cccc}
\begin{tikzpicture}[scale=0.6,>=stealth]
\draw[lred,thick] (-1,0) node[left,red] {$1$} -- (1,0) node[right,red] {$1$};
\draw[lred,line width=5pt] (0,-1) -- (0,1);
\node[below] at (0,-1) {$g$};
\draw[thick,->,red] (-1,0) -- (-0.15,0) -- (-0.15,-1);
\draw[thick,<-,red] (0,-1) -- (0,1);
\draw[thick,->,red] (0.15,1) -- (0.15,0) -- (1,0);
\node[above] at (0,1) {$g$};
\end{tikzpicture}
\quad\quad\quad
&
\begin{tikzpicture}[scale=0.6,>=stealth]
\draw[lred,thick] (-1,0) node[left,red] {$1$} -- (1,0) node[right,red] {$0$};
\draw[lred,line width=5pt] (0,-1) -- (0,1);
\node[below] at (0,-1) {$g+1$};
\draw[thick,<-,red] (0.15,-1) -- (0.15,1);
\draw[thick,<-,red] (0,-1) -- (0,1);
\draw[thick,<-,red] (-0.15,-1) -- (-0.15,0) -- (-1,0);
\node[above] at (0,1) {$g$};
\end{tikzpicture}
\quad\quad\quad
&
\begin{tikzpicture}[scale=0.6,>=stealth]
\draw[lred,thick] (-1,0) node[left,red] {$0$} -- (1,0) node[right,red] {$1$};
\draw[lred,line width=5pt] (0,-1) -- (0,1);
\node[below] at (0,-1) {$g$};
\draw[thick,<-,red] (1,0) -- (0.15,0) -- (0.15,1);
\draw[thick,<-,red] (0,-1) -- (0,1);
\draw[thick,<-,red] (-0.15,-1) -- (-0.15,1);
\node[above] at (0,1) {$g+1$};
\end{tikzpicture}
\quad\quad\quad
&
\begin{tikzpicture}[scale=0.6,>=stealth]
\draw[lred,thick] (-1,0) node[left,red] {$0$} -- (1,0) node[right,red] {$0$};
\draw[lred,line width=5pt] (0,-1) -- (0,1);
\node[below] at (0,-1) {$g$};
\draw[thick,<-,red] (-0.075,-1) -- (-0.075,1);
\draw[thick,<-,red] (0.075,-1) -- (0.075,1);
\node[above] at (0,1) {$g$};
\end{tikzpicture}
\\
\bw_{u}(g,1;g,1)
\quad\quad\quad
&
\bw_{u}(g+1,1;g,0)
\quad\quad\quad
&
\bw_{u}(g,0;g+1,1)
\quad\quad\quad
&
\bw_{u}(g,0;g,0)
\\ \\
\dfrac{u-s q^g}{1-su}
\quad\quad\quad
&
\dfrac{1-s^2 q^{g}}{1-su}
\quad\quad\quad
&
\dfrac{(1-q^{g+1})u}{1-su}
\quad\quad\quad
&
\dfrac{1-sq^g u}{1-su}
\end{array}
\end{align}
where we have also reversed the orientation of all paths, so that they propagate NW $\rightarrow$ SE. In this way we define a set of {\it dual vertex weights} $w_u^{*}(i,j;k,\ell)$, whose non-zero values are indicated in \eqref{red_vertices}. There is a strong similarity between the vertices \eqref{red_vertices} and those of the starting vertex model \eqref{vertices}. Indeed, by reflecting the vertices \eqref{red_vertices} about their central horizontal axis, we almost recover those of \eqref{vertices} -- the only difference is that the weights of the two middle vertices have changed slightly. It turns out that this discrepancy can be cured by a simple gauge transformation of the weights. More precisely, find that
\begin{align}
\label{wt_transform}
w_u(i,j;k,\ell)
=
\frac{(q;q)_k}{(s^2;q)_k}
\bw_{u}(k,j;i,\ell)
\frac{(s^2;q)_i}{(q;q)_i},
\quad
i,k \in \mathbb{Z}_{\geq 0},
\quad
0 \leq j,\ell \leq 1.
\end{align}

\subsection{Partition states}

Let $V = {\rm Span}\left\{\ket{m} \right\}_{m \geq 0}$ be an infinite-dimensional vector space, and for all $i \geq 0$ let $V_i$ denote a copy of $V$. Further, let $\mathbb{V} = \otimes_{i \geq 0} V_i$, the global vector space obtained by tensoring the local spaces $V_i$ for all $i \geq 0$. It can be viewed as the span of pure tensors $\otimes_{i \geq 0} \ket{m_i}_i$ with $m_i \in \mathbb{Z}_{\geq 0}$, all but finitely many of which are $0$.

Consider a partition $\lambda = 0^{m_0} 1^{m_1} 2^{m_2} \dots $ expressed in terms of its of part multiplicities $m_i(\lambda)$. The part multiplicities can be obtained as the difference between adjacent parts in the conjugate partition $\lambda'$, as follows:
\begin{align*}
m_i(\lambda)
=
\lambda'_i-\lambda'_{i+1},
\end{align*}
since $\lambda'_i = \sum_{j \geq i} m_j(\lambda)$ for all $i \geq 0$. To every partition $\lambda$ we associate a unique state in $\mathbb{V}$:
\begin{align}
\label{partition}
\ket{\lambda}
=
\bigotimes_{i \geq 0}
\ket{m_i}_i
=
\bigotimes_{i \geq 0}
\ket{\lambda'_i-\lambda'_{i+1}}_i
\in 
\mathbb{V}.
\end{align}
Similarly, one can define dual partition states $\bra{\lambda} = \otimes_{i \geq 0} \bra{m_i}_i \in \mathbb{V}^{*}$, with the orthonormal action $\langle \lambda | \mu \rangle = \delta_{\lambda,\mu}$ for all partitions $\lambda,\mu$.

In the coming sections, we will study the vertex model \eqref{vertices} on a square lattice with infinitely many columns, labelled from left to right by non-negative integers. In that situation, we shall identify the vector space $V_i$ with the $i^{\rm th}$ column of the lattice, with $\ket{m}_i$ encoding a vertical edge in that column which is occupied by $m$ paths. 

In Section \ref{sec:wh_poly} we will usually require infinitely many paths on the vertical edges of the $0^{\rm th}$ column. In this setting, the natural objects are partitions $\lambda$ with strictly positive parts (\ie\ the traditional notion of a partition), since the number of zero parts in $\lambda$ becomes irrelevant. This leads us to define, for all (positive) partitions $\lambda = 1^{m_1} 2^{m_2} \dots$, the state
\begin{align}
\label{pos_partition}
\kett{\lambda}
=
\bigotimes_{i \geq 1}
\ket{m_i}_i
\in 
\wt{\mathbb{V}},
\quad
\text{where}\ \
\wt{\mathbb{V}} = \bigotimes_{i \geq 1} V_i,
\end{align}
and dual state $\bbra{\lambda} = \otimes_{i \geq 1} \bra{m_i}_i \in \wt{\mathbb{V}}^{*}$.

\section{\Hl functions}
\label{sec:spin_hl_funct}

\subsection{Setup of the lattice for $\F_{\lambda}(u_1,\dots,u_\ell)$}
\label{sec:hl-F}

Following \cite{BorodinP2}, we study the vertex model \eqref{vertices} on the quadrant $\mathbb{Z}_{\geq 0} \times \mathbb{Z}_{\geq 1}$. Horizontal lines are oriented from left to right and numbered from bottom to top, while vertical lines are oriented from bottom to top and numbered from left to right. The $i^{\rm th}$ horizontal line is assigned spectral parameter $u_i$, where $i \in \mathbb{Z}_{\geq 1}$. The boundary conditions are fixed as follows:
\begin{enumerate}[label=\bf\arabic*.]
\item There is an incoming lattice path at the external left edge of every horizontal line.
\item The external bottom edge of every vertical line is unoccupied.
\end{enumerate}
\begin{defn}
{\rm
Let $\lambda = (\lambda_1 \geq \cdots \geq \lambda_\ell \geq 0) \in {\rm Part}_{\ell}$ be a partition. The \hl function $\F_{\lambda}(u_1,\dots,u_\ell)$ is defined as the partition function of $\mathbb{Z}_{\geq 0} \times \{1,\dots,\ell\}$ in the model \eqref{vertices}, whose left and bottom edge boundary conditions are given by {\bf 1} and {\bf 2} as above, and whose $i^{\rm th}$ external top edge is occupied by exactly $m_i(\lambda)$ paths for all $i \geq 0$. See Figure \ref{fig:F}, left panel.
}
\end{defn}
\begin{remark}{\rm
The lattice used in the definition of $\F_{\lambda}(u_1,\dots,u_\ell)$ has infinitely many vertical lines, but it remains a meaningful definition. Indeed, for a given $\lambda$, only the vertical lines numbered $0$ to $\lambda_1$ will contribute non-trivially to the partition function. It is obvious that all vertical lines beyond this will be unoccupied, giving rise to infinitely many copies of the vertex $\vert{0}{0}{0}{0}{0.3}$, which has weight $1$. A similar remark applies to all infinite partition functions studied in this section.
}
\end{remark}

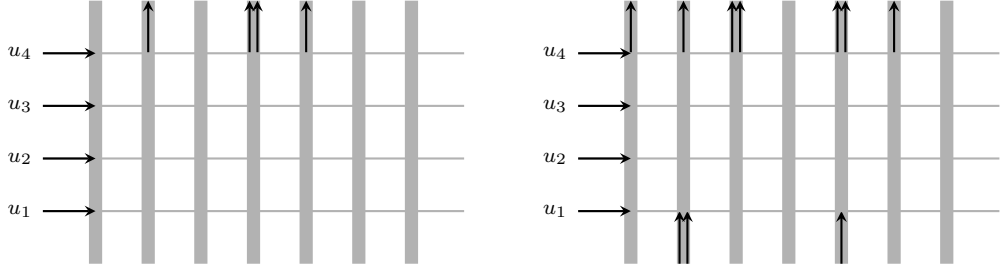
\begin{figure}
\begin{tabular}{cc}
\begin{tikzpicture}[scale=0.7,baseline=(current bounding box.center),>=stealth]
\foreach\x in {0,...,6}{
\draw[lgray,line width=5pt] (\x,0) -- (\x,5);
}
\draw[thick,->] (1,4) -- (1,5);
\draw[thick,->] (2.925,4) -- (2.925,5);
\draw[thick,->] (3.075,4) -- (3.075,5);
\draw[thick,->] (4,4) -- (4,5);
\foreach\y in {1,...,4}{
\draw[lgray,thick] (-1,\y) -- (7,\y);
\draw[thick,->] (-1,\y) -- (0,\y);
\node[left] at (-1,\y) {\tiny $u_\y$};
}
\end{tikzpicture}
\quad
&
\quad
\begin{tikzpicture}[scale=0.7,baseline=(current bounding box.center),>=stealth]
\foreach\x in {0,...,6}{
\draw[lgray,line width=5pt] (\x,0) -- (\x,5);
}
\draw[thick,->] (0,4) -- (0,5);
\draw[thick,->] (1,4) -- (1,5);
\draw[thick,->] (1.925,4) -- (1.925,5);
\draw[thick,->] (2.075,4) -- (2.075,5);
\draw[thick,->] (3.925,4) -- (3.925,5);
\draw[thick,->] (4.075,4) -- (4.075,5);
\draw[thick,->] (5,4) -- (5,5);
\draw[thick,->] (0.925,0) -- (0.925,1);
\draw[thick,->] (1.075,0) -- (1.075,1);
\draw[thick,->] (4,0) -- (4,1);
\foreach\y in {1,...,4}{
\draw[lgray,thick] (-1,\y) -- (7,\y);
\draw[thick,->] (-1,\y) -- (0,\y);
\node[left] at (-1,\y) {\tiny $u_\y$};
}
\end{tikzpicture}
\end{tabular}
\caption{Left panel: boundary conditions used to calculate $\F_{\lambda}(u_1,\dots,u_4)$ for $\lambda = (4,3,3,1)$. Right panel: boundary conditions used to calculate $\F_{\lambda/\mu}(u_1,\dots,u_4)$ for $\lambda=(5,4,4,2,2,1,0)$, $\mu=(4,1,1)$.}
\label{fig:F}
\end{figure}

By allowing more general boundary conditions at the base of the lattice, we can extend the definition to skew Young diagrams:
\begin{defn}{\rm 
Let $\lambda=(\lambda_1 \geq \cdots \geq \lambda_{\ell+n} \geq 0) \in {\rm Part}_{\ell+n}$ and $\mu=(\mu_1 \geq \cdots \geq \mu_n \geq 0) \in {\rm Part}_n$ be two partitions. The skew \hl function $\F_{\lambda/\mu}(u_1,\dots,u_\ell)$ is defined as the partition function of $\mathbb{Z}_{\geq 0} \times \{1,\dots,\ell\}$ in the model \eqref{vertices}, whose left edge boundary satisfies {\bf 1} as above, whose $i^{\rm th}$ external top edge is occupied by exactly $m_i(\lambda)$ paths, and whose $i^{\rm th}$ external bottom edge is occupied by exactly $m_i(\mu)$ paths for all $i \geq 0$. The (ordinary) \hl function $\F_{\lambda}$ is recovered in the special case $\mu = \varnothing$. See Figure \ref{fig:F}, right panel.
}
\end{defn} 

\subsection{Setup of the lattice for $\G_{\lambda}(v_1,\dots,v_n)$}
\label{sec:hl-G}

Here we use an alternative set of boundary conditions:
\begin{enumerate}[label=\bf\arabic*.]
\item The external left edge of every horizontal line is unoccupied.
\item The external bottom edge of each vertical line is unoccupied, with the exception of the $0^{\rm th}$, which is occupied by $\ell$ incoming paths, for some $\ell \geq 0$.
\end{enumerate}
\begin{defn}
\label{def:G}
{\rm
Let $\lambda = (\lambda_1 \geq \cdots \geq \lambda_\ell \geq 0) \in {\rm Part}_{\ell}$ be a partition. The dual \hl function $\G_{\lambda}(v_1,\dots,v_n)$ is defined as the partition function of $\mathbb{Z}_{\geq 0} \times \{1,\dots,n\}$ in the model \eqref{vertices}, whose left and bottom edge boundary conditions are given by {\bf 1} and {\bf 2} as above, and whose $i^{\rm th}$ external top edge is occupied by exactly $m_i(\lambda)$ paths for all $i \geq 0$. Notice that we impose no relation between the values of $\ell$ and $n$. See Figure \ref{fig:G}, left panel.
}
\end{defn}

\begin{figure}
\begin{tabular}{cc}
\begin{tikzpicture}[scale=0.7,baseline=(current bounding box.center),>=stealth]
\foreach\x in {0,...,6}{
\draw[lgray,line width=5pt] (\x,0) -- (\x,6);
}
\draw[thick,->] (0,5) -- (0,6);
\draw[thick,->] (1,5) -- (1,6);
\draw[thick,->] (4,5) -- (4,6);
\draw[thick,->] (5,5) -- (5,6);
\draw[thick,->] (0.775-1,0) -- (0.775-1,1);
\draw[thick,->] (0.925-1,0) -- (0.925-1,1);
\draw[thick,->] (1.075-1,0) -- (1.075-1,1);
\draw[thick,->] (1.225-1,0) -- (1.225-1,1);
\foreach\y in {1,...,5}{
\draw[lgray,thick] (-1,\y) -- (7,\y);
\node[left] at (-1,\y) {\tiny $v_\y$};
}
\end{tikzpicture}
\quad
&
\quad
\begin{tikzpicture}[scale=0.7,baseline=(current bounding box.center),>=stealth]
\foreach\x in {0,...,6}{
\draw[lgray,line width=5pt] (\x,0) -- (\x,6);
}
\draw[thick,->] (1,5) -- (1,6);
\draw[thick,->] (2.925,5) -- (2.925,6);
\draw[thick,->] (3.075,5) -- (3.075,6);
\draw[thick,->] (4,5) -- (4,6);
\draw[thick,->] (0.925-1,0) -- (0.925-1,1);
\draw[thick,->] (0.075,0) -- (0.075,1);
\draw[thick,->] (2,0) -- (2,1);
\draw[thick,->] (3,0) -- (3,1);
\foreach\y in {1,...,5}{
\draw[lgray,thick] (-1,\y) -- (7,\y);
\node[left] at (-1,\y) {\tiny $v_\y$};
}
\end{tikzpicture}
\end{tabular}
\caption{Left panel: boundary conditions used to calculate $\G_{\lambda}(v_1,\dots,v_5)$ for $\lambda = (5,4,1,0)$. Right panel: boundary conditions used to calculate $\G_{\lambda/\mu}(v_1,\dots,v_5)$ for $\lambda = (4,3,3,1)$, $\mu=(3,2,0,0)$.}
\label{fig:G}
\end{figure}
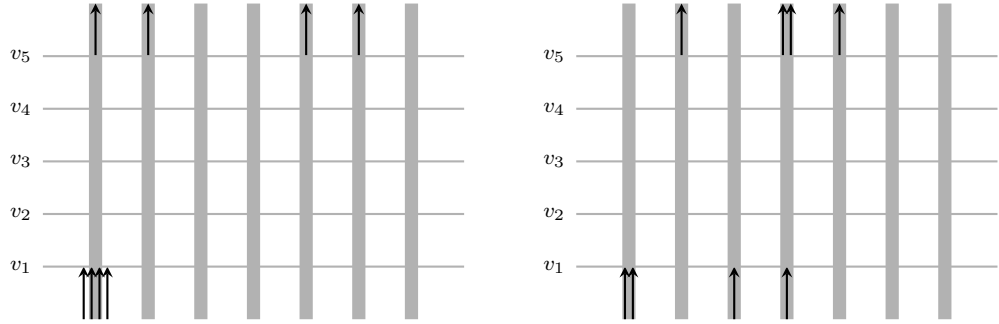
Once again, this definition can be extended to skew Young diagrams by allowing for more general boundary conditions at the base of the lattice:
\begin{defn}
\label{def:skewG}
{\rm
Let $\lambda = (\lambda_1 \geq \cdots \geq \lambda_\ell \geq 0) \in {\rm Part}_{\ell}$ and $\mu = (\mu_1 \geq \cdots \geq \mu_\ell \geq 0) \in {\rm Part}_{\ell}$ be two partitions of equal length. The dual skew \hl function $\G_{\lambda/\mu}(v_1,\dots,v_n)$ is defined as the partition function of $\mathbb{Z}_{\geq 0} \times \{1,\dots,n\}$ in the model \eqref{vertices}, whose left edge boundary conditions are given by {\bf 1} as above, whose $i^{\rm th}$ external top edge is occupied by exactly $m_i(\lambda)$ paths, and whose $i^{\rm th}$ external bottom edge is occupied by exactly $m_i(\mu)$ paths for all $i \geq 0$. The (ordinary) dual \hl function $\G_{\lambda}$ is recovered in the special case $\mu = 0^{\ell}$. See Figure \ref{fig:G}, right panel.
}
\end{defn}

\subsection{The function $\Gc_{\lambda/\mu}(v_1,\dots,v_n)$}
\label{sec:hl-Gc}

There is a slight modification of the dual polynomials in Section \ref{sec:hl-G} that turns out to be important for correctly stating the Cauchy identity between $\F_{\lambda}$ and $\G_{\lambda}$ \cite{Borodin,BorodinP2}. Let $\lambda$ and $\mu$ be two partitions in the set ${\rm Part}_{\ell}$. We define
\begin{align}
\label{normaliz}
\Gc_{\lambda/\mu}(v_1,\dots,v_n)
:=
\frac{\c_{\lambda}(q,s)}{\c_{\mu}(q,s)}
\G_{\lambda/\mu}(v_1,\dots,v_n),
\qquad
\c_{\lambda}(q,s)
:=
\frac{(q;q)_{\ell}}{(s^2;q)_{\ell}}
\prod_{i \geq 0} \frac{(s^2;q)_{m_i(\lambda)}}{(q;q)_{m_i(\lambda)}}.
\end{align}
In the special case $\mu = 0^{\ell}$, one has $m_0(\mu) = \ell$ and $m_i(\mu) = 0$ for all 
$i \geq 1$, so that $\c_{\mu}(q,s) = 1$. Then \eqref{normaliz} reduces to 
\begin{align*}
\Gc_{\lambda}(v_1,\dots,v_n)
=
\c_{\lambda}(q,s)
\G_{\lambda}(v_1,\dots,v_n).
\end{align*}

\begin{prop}
\label{prop:Gc}
{\rm
The function $\Gc_{\lambda/\mu}(v_1,\dots,v_n)$ is equal to the partition function of $\mathbb{Z}_{\geq 0} \times \{1,\dots,n\}$ in the model \eqref{red_vertices}, whose external left edges are all unoccupied, whose $i^{\rm th}$ external top edge is occupied by exactly $m_i(\mu)$ paths, and whose $i^{\rm th}$ external bottom edge is occupied by exactly $m_i(\lambda)$ paths for all $i \geq 0$ (see Figure \ref{fig:redG}).
}
\end{prop}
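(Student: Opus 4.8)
The plan is to transport the partition function defining $\G_{\lambda/\mu}$ from the weights \eqref{vertices} to the weights \eqref{red_vertices} vertex by vertex, using the gauge identity \eqref{wt_transform}, and to check that the scalar picked up along the way is exactly $\c_\mu(q,s)/\c_\lambda(q,s)$. Concretely, I would expand $\G_{\lambda/\mu}(v_1,\dots,v_n)$ as the finite sum over admissible path configurations of the lattice $\mathbb{Z}_{\geq 0}\times\{1,\dots,n\}$ with the boundary data of Definition \ref{def:skewG}, each configuration weighted by $\prod_{r,c}w_{v_r}\big(g^{(c)}_{r-1},j^{(c)}_r;g^{(c)}_r,\ell^{(c)}_r\big)$, where $g^{(c)}_r$ is the occupation of the vertical edge of column $c$ between rows $r$ and $r+1$ (so $g^{(c)}_0=m_c(\mu)$ and $g^{(c)}_n=m_c(\lambda)$) and $j^{(c)}_r,\ell^{(c)}_r$ are the left and right horizontal states at the vertex $(r,c)$. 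Rewriting every factor through \eqref{wt_transform} turns the weight of the vertex $(r,c)$ into
\[
\frac{(q;q)_{g^{(c)}_r}}{(s^2;q)_{g^{(c)}_r}}\; w^{*}_{v_r}\big(g^{(c)}_r,j^{(c)}_r;g^{(c)}_{r-1},\ell^{(c)}_r\big)\; \frac{(s^2;q)_{g^{(c)}_{r-1}}}{(q;q)_{g^{(c)}_{r-1}}},
\]
and in each column the Pochhammer prefactors telescope over $r=1,\dots,n$, contributing the configuration-independent factor $\frac{(q;q)_{m_c(\lambda)}}{(s^2;q)_{m_c(\lambda)}}\cdot\frac{(s^2;q)_{m_c(\mu)}}{(q;q)_{m_c(\mu)}}$. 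Multiplying these over all $c\geq 0$ (a finite product, since $m_c(\lambda)=m_c(\mu)=0$ for large $c$) and using $\ell(\lambda)=\ell(\mu)$ to cancel the common factor $(q;q)_\ell/(s^2;q)_\ell$ present in both $\c_\lambda(q,s)$ and $\c_\mu(q,s)$, the total scalar is precisely $\c_\mu(q,s)/\c_\lambda(q,s)$; hence $\Gc_{\lambda/\mu}=\frac{\c_\lambda(q,s)}{\c_\mu(q,s)}\G_{\lambda/\mu}$ equals $\sum_{\mathrm{configs}}\prod_{r,c}w^{*}_{v_r}\big(g^{(c)}_r,j^{(c)}_r;g^{(c)}_{r-1},\ell^{(c)}_r\big)$, the sum taken over exactly the configurations that occur for $\G_{\lambda/\mu}$.

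It then remains to recognise this as the partition function of the model \eqref{red_vertices} with the boundary in the statement. The one point of care is that, because \eqref{wt_transform} swaps the top and bottom vertical arguments, in the displayed sum the $w^{*}$-vertex sitting in row $r$ of the original lattice has the edge \emph{above} it, $g^{(c)}_r$, in its bottom slot; so one reflects the lattice about its horizontal axis (equivalently, lists the spectral parameters as $v_n,\dots,v_1$ from bottom to top). After this reflection the external bottom edges carry $m_c(\lambda)$ paths, the external top edges carry $m_c(\mu)$ paths, the external left edges remain unoccupied, the Boltzmann weights are exactly those of \eqref{red_vertices}, and the empty far-right columns cause no difficulty because $w^{*}_u(0,0;0,0)=1$ (unlike the intermediate weights \eqref{bar_vertices}). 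Finally, $\Gc_{\lambda/\mu}$ is symmetric in the $v_i$ — being a fixed multiple of $\G_{\lambda/\mu}$, for which symmetry follows from the Yang--Baxter equation by the usual row-exchange argument — so the order in which the $v_i$ are assigned to rows is immaterial, and we land on the configuration of Figure \ref{fig:redG}.

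The work is almost entirely in the last step: keeping the vertical reflection, the top/bottom interchange of boundary data, and the persistence of empty left edges straight, and matching them against Figure \ref{fig:redG}. Nothing here is analytically subtle; the telescoping identity and its coincidence with $\c_\mu/\c_\lambda$ are a short and routine computation once the edge-labelling conventions are fixed.
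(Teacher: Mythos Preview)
Your proof is correct and follows essentially the same approach as the paper: apply the gauge identity \eqref{wt_transform} vertex by vertex and observe that the resulting Pochhammer factors telescope along each column to produce exactly $\c_\mu(q,s)/\c_\lambda(q,s)$. You supply more detail than the paper does about the vertical reflection and the row-ordering of the spectral parameters (which in fact already matches Figure~\ref{fig:redG} after reflection, so the symmetry argument is not strictly needed), but the underlying mechanism is identical.
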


\begin{figure}
\begin{tikzpicture}[scale=0.7,baseline=(current bounding box.center),>=stealth]
\foreach\x in {0,...,6}{
\draw[lred,line width=5pt] (\x,2) -- (\x,7);
}
\foreach\y in {4,...,1}{
\draw[lred,thick] (-1,7-\y) -- (7,7-\y);
\node[left] at (-1,7-\y) {\tiny$v_\y$};
}
\draw[thick,<-,red] (-0.15,6) -- (-0.15,7); \draw[thick,<-,red] (0,6) -- (0,7); \draw[thick,<-,red] (0.15,6) -- (0.15,7);
\draw[thick,<-,red] (2,6) -- (2,7);
\draw[thick,<-,red] (3,6) -- (3,7);
\draw[thick,<-,red] (0,2) -- (0,3);
\draw[thick,<-,red] (0.925,2) -- (0.925,3); \draw[thick,<-,red] (1.075,2) -- (1.075,3);
\draw[thick,<-,red] (3,2) -- (3,3);
\draw[thick,<-,red] (4,2) -- (4,3);
\end{tikzpicture}
\caption{Lattice used to calculate $\Gc_{\lambda/\mu}(v_1,\dots,v_4)$ for 
$\lambda=(4,3,1,1,0)$, $\mu=(3,2,0,0,0)$.}
\label{fig:redG}
\end{figure}
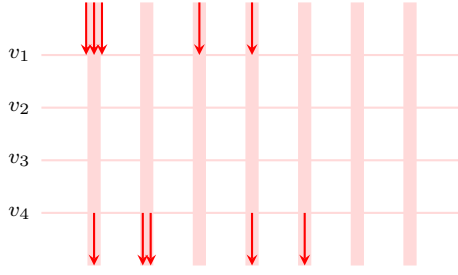

\begin{proof}
This is a straightforward consequence of the relation \eqref{wt_transform}. Starting from the partition function as described in the Proposition, we apply the conjugation \eqref{wt_transform} to every vertex in the lattice. This conjugation can be effected by multiplying the whole partition function by 
\begin{align*}
\prod_{i \geq 0} 
\left(
\frac{(s^2;q)_{m_i(\mu)}}{(q;q)_{m_i(\mu)}}
\right)
\left(
\frac{(q;q)_{m_i(\lambda)}}{(s^2;q)_{m_i(\lambda)}}
\right),
\end{align*}
and at the end of this transformation we obtain the partition function as described in Definition \ref{def:skewG}. We conclude that the partition function that we started with is equal to
\begin{align*}
\prod_{i \geq 0} 
\left(
\frac{(q;q)_{m_i(\mu)}}{(s^2;q)_{m_i(\mu)}}
\right)
\left(
\frac{(s^2;q)_{m_i(\lambda)}}{(q;q)_{m_i(\lambda)}}
\right)
\G_{\lambda/\mu}(v_1,\dots,v_n)
=
\frac{\c_{\lambda}(q,s)}{\c_{\mu}(q,s)}
\G_{\lambda/\mu}(v_1,\dots,v_n),
\end{align*}
completing the proof.
\end{proof}

\subsection{Row operators}
\label{sec:row_hl}

So far we defined the \hl functions as partition functions in the vertex model \eqref{vertices}, but it is also possible to formulate them algebraically. For that, we now introduce finite {\it row operators,} whose action is specified in terms of the partition function of a single row of vertices:
\begin{align*}
w_u
\left(
\begin{gathered}
\begin{tikzpicture}[scale=0.5,baseline=(current bounding box.center)]
\draw[lgray,thick] (-1,0) -- (7,0);
\node[left] at (-0.8,0) {\tiny $j$};\node[right] at (6.8,0) {\tiny $\ell$};
\foreach\x in {0,...,6}{
\draw[lgray,line width=5pt] (\x,-1) -- (\x,1);
}
\node[below] at (0,-0.8) {\tiny $i_0$};\node[above] at (0,0.8) {\tiny $k_0$};
\node[below] at (3,-0.8) {$\cdots$};\node[above] at (3,0.8) {$\cdots$};
\node[below] at (6,-0.8) {\tiny $i_L$};\node[above] at (6,0.8) {\tiny $k_L$};
\end{tikzpicture}
\end{gathered}
\right)
\equiv
w_u\Big(\{i_0,\dots,i_L\}, j ; \{k_0,\dots,k_L\}, \ell \Big),
\end{align*}
where $L$ is any non-negative integer. Introduce the monodromy matrix
\begin{align*}
T(u)
=
\begin{pmatrix}
T_u(0;0) & T_u(0;1)
\\
T_u(1;0) & T_u(1;1)
\end{pmatrix}
\equiv
\begin{pmatrix}
A_L(u) & B_L(u)
\\
C_L(u) & D_L(u)
\end{pmatrix}
\end{align*}
whose entries $T_u(j;\ell)$ are operators acting linearly on $V_0 \otimes \cdots \otimes V_L$ as follows\footnote{We point out that our conventions regarding the operators \eqref{finite_action} are different to those of \cite{BorodinP2}. When viewed as rows in partition functions, the operators \eqref{finite_action} act from top to bottom, whereas those in \cite{BorodinP2} act from bottom to top. Also, the labelling of the operators in \cite{BorodinP2} is obtained from our labelling under $B \leftrightarrow C$.}:
\begin{align}
\label{finite_action}
T_u(j;\ell) : \ket{k_0}_0 \otimes \cdots \otimes \ket{k_L}_L
\mapsto
\sum_{i_0,\dots,i_L \geq 0}
w_u\Big(\{i_0,\dots,i_L\}, j ; \{k_0,\dots,k_L\}, \ell \Big)
\ket{i_0}_0 \otimes \cdots \otimes \ket{i_L}_L.
\end{align}
It is immediate, from $L+1$ applications of the Yang--Baxter equation \eqref{yb-eqn}, that the monodromy matrix satisfies the intertwining relation
\begin{align*}
\mathcal{P}
\circ
\mathcal{R}(u/v)
\circ
\Big(
T(v) \otimes T(u)
\Big)
=
\Big(
T(u) \otimes T(v)
\Big)
\circ
\mathcal{P}
\circ
\mathcal{R}(u/v),
\end{align*}
as an identity in ${\rm End}(W_1 \otimes W_2 \otimes V_1 \otimes \cdots \otimes V_L)$. This encodes sixteen bilinear relations among the entries of the monodromy matrix, which are collectively known as the Yang--Baxter algebra. See, for example, Chapter VII of \cite{KorepinBI} for a complete list of the relations. We will primarily be interested in the following ones:
\begin{align}
\label{commute2}
[A_L(u),A_L(v)] = [B_L(u),B_L(v)] = [C_L(u),C_L(v)] = [D_L(u),D_L(v)] = 0,
\\
\label{exchange}
(1-q u/v) D_L(v) C_L(u) = (1-u/v) C_L(u) D_L(v) + (1-q) D_L(u) C_L(v).
\end{align}
It is convenient to define a ``starred'' version of the $B$- and $D$-operators:
\begin{align}
\label{bar_norm}
\b{B}_L(u) := \left( \frac{u-s}{1-s u} \right)^{L+1} B_L(u^{-1}),
\quad\quad
\b{D}_L(u) := \left( \frac{u-s}{1-s u} \right)^{L+1} D_L(u^{-1}).
\end{align}
This has the effect of modifying these operators, so that they are constructed in precisely the same way as above, but now using the alternative vertices \eqref{bar_vertices}.

\subsection{Infinite volume limit}

It is possible to take the length of the row operators to infinity, by sending $L \rightarrow \infty$ in \eqref{finite_action}. However some care is needed in doing so, since the behaviour of $A_L(u),B_L(u),C_L(u),D_L(u)$ in the limit depends on how they are normalized. One finds that
\begin{align*}
A(u)
:= \lim_{L \rightarrow \infty} A_L(u),
\quad\quad
C(u)
:= \lim_{L \rightarrow \infty} C_L(u)
\end{align*} 
make sense as written; for the remaining two operators it is necessary to use the normalization \eqref{bar_norm}, leading to
\begin{align*}
\b{B}(u)
:= \lim_{L \rightarrow \infty} \b{B}_L(u),
\quad\quad
\b{D}(u)
:= \lim_{L \rightarrow \infty} \b{D}_L(u).
\end{align*}
Now let us examine what happens to the commutation relation \eqref{exchange} in the limit $L \rightarrow \infty$. We firstly invert $v$, then multiply the whole relation by $(v-s)^{L+1}/(1-sv)^{L+1}$: 
\begin{align*}
(1-q u v) \b{D}_L(v) C_L(u) 
=
(1-u v) C_L(u) \b{D}_L(v)
+
(1-q) 
\left(
\frac{(u-s)(v-s)}{(1-su)(1-sv)}
\right)^{L+1}
\b{D}_L(u) C_L(v).
\end{align*}
Assuming that $u,v,s$ are chosen such that $|(u-s)(v-s)| < |(1-su)(1-sv)|$, the second term of the above equation vanishes when $L \rightarrow \infty$, and we obtain
\begin{align}
\label{HL_exchange}
(1-q u v) \b{D}(v) C(u) 
=
(1-u v) C(u) \b{D}(v).
\end{align}

\subsection{Algebraic formulation of \hl functions}

Using the definition of the row operators from the previous section, the following result is immediate:
\begin{prop}
\label{prop:expect}
{\rm
Fix two partitions $\lambda \in {\rm Part}_{\ell+n}$ and $\mu \in {\rm Part}_{n}$. Then
\begin{align*}
\F_{\lambda/\mu}(u_1,\dots,u_\ell)
=
\bra{\mu} C(u_1) \dots C(u_\ell) \ket{\lambda},
\end{align*}
where we have used the notation \eqref{partition} for partition states, namely, $\ket{\lambda} = \otimes_{i \geq 0} \ket{m_i(\lambda)}_i \in \mathbb{V}$ and $\bra{\mu} = \otimes_{i \geq 0} \bra{m_i(\mu)}_i \in \mathbb{V}^{*}$. Similarly, fix two partitions $\lambda,\mu \in {\rm Part}_{\ell}$. Then
\begin{align*}
\G_{\lambda/\mu}(v_1,\dots,v_n)
&=
\bra{\mu} A(v_1) \dots A(v_n) \ket{\lambda},
\\
\Gc_{\lambda/\mu}(v_1,\dots,v_n)
&=
\bra{\lambda} \b{D}(v_n) \dots \b{D}(v_1) \ket{\mu}.
\end{align*}
}
\end{prop}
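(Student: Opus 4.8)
The plan is to read off each identity directly from the definition of the relevant monodromy-matrix entry in \eqref{finite_action}, matching it against the boundary conditions that define the partition function on the left-hand side; with that dictionary in place, all three statements follow by iterating \eqref{finite_action} once per row.

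First I would handle $\F_{\lambda/\mu}$. Since $C_L(u)=T_u(1;0)$, the operator $C_L(u)$ encodes one horizontal line with an incoming path on its left edge (state $j=1$) and no outgoing path on its right edge (state $\ell=0$), which is exactly boundary condition {\bf 1} of the $\F$-lattice. Composing $\ell$ such operators, with the vertical edge states at the very top frozen to $\ket{\lambda}$ and those at the very bottom to $\bra{\mu}$, rebuilds the lattice of the definition: the intermediate sums over vertical states that appear when \eqref{finite_action} is applied repeatedly are precisely the sums over internal vertical edges of the partition function. One small point to check is that $C=T_u(1;0)$, rather than $D=T_u(1;1)$, is the correct operator, i.e.\ that no path ever exits to the right. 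This is a path-conservation count: $\sum_i m_i(\lambda)=\ell(\lambda)=\ell+n$ and $\sum_i m_i(\mu)=\ell(\mu)=n$, so the $\ell$ incoming left edges exactly supply the surplus $(\ell+n)-n$ and the right boundary remains empty. The order $\bra{\mu}C(u_1)\cdots C(u_\ell)\ket{\lambda}$ (as opposed to the reverse) is forced by our convention that the operators \eqref{finite_action} act from top to bottom together with the fact that the lines carrying $u_1,\dots,u_\ell$ are numbered from bottom to top, so the topmost row (parameter $u_\ell$) is the first operator applied to $\ket{\lambda}$. The case of $\G_{\lambda/\mu}$ is word for word the same with $A_L(u)=T_u(0;0)$ in place of $C_L(u)$ (empty horizontal edges on both sides, matching boundary condition {\bf 1} of the $\G$-lattice); since $\lambda,\mu\in{\rm Part}_\ell$ have equal length, the same count again forces the right boundary to stay empty.

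For $\Gc_{\lambda/\mu}$ I would first invoke Proposition \ref{prop:Gc} to realize it as the partition function built from the dual vertices \eqref{red_vertices}, with all external left edges unoccupied, $i$-th top edge carrying $m_i(\mu)$ paths and $i$-th bottom edge carrying $m_i(\lambda)$ paths. By \eqref{bar_norm}, $\b{D}_L(u)$ is the operator $D_L=T_u(1;1)$ rebuilt from the alternative weights \eqref{bar_vertices}; complementing the horizontal states (the operation turning \eqref{bar_vertices} into \eqref{red_vertices}) converts the occupied left/right horizontal edges of $D$ into the unoccupied ones demanded by Proposition \ref{prop:Gc}, while reversing the orientation of all paths to NW$\to$SE reverses the vertical reading order. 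This last point is what produces the descending product $\b{D}(v_n)\cdots\b{D}(v_1)$ and the interchange of the roles of $\ket{\mu}$ and $\bra{\lambda}$ relative to the $\F$- and $\G$-identities. Finally there is the passage to infinite volume: for fixed $\lambda,\mu$ only columns $0,\dots,\lambda_1$ ever carry a path, so $A(u)$ and $C(u)$ act on $\ket{\lambda}$ exactly as $A_L(u)$ and $C_L(u)$ do for every $L\geq\lambda_1$, while for $\b{D}(u)$ the normalization in \eqref{bar_norm} is precisely what cancels the contribution of the infinitely many trailing columns and makes $\b{D}(u)=\lim_{L\to\infty}\b{D}_L(u)$ well defined, as already discussed around \eqref{exchange}.

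I do not expect a genuine obstacle: this is the ``immediate'' consequence advertised before the statement, once the monodromy-matrix entries are identified with rows of the lattice. The only thing requiring care is the bookkeeping of orientations and reading directions --- aligning the left-to-right order of the operator product with the geometric stacking of rows, given that \eqref{finite_action} acts top to bottom while the spectral parameters are indexed bottom to top, and, in the $\Gc$ case, accounting for the extra reflection coming from the NW$\to$SE orientation of the dual model. The cleanest presentation is to pin these conventions down once and then let a one-line induction on the number of rows finish each identity.
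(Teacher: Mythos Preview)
Your proposal is correct and follows exactly the approach the paper takes: the paper's own proof is a one-liner (``these expectation values can be directly compared with the partition functions written down in Sections \ref{sec:hl-F}--\ref{sec:hl-Gc}''), and you have simply spelled out that comparison in detail, with the correct bookkeeping of orientations and operator orderings. One minor remark: in the $\Gc$ case, the cleanest explanation for the vertical flip (swap of $\ket{\mu}$ and $\bra{\lambda}$, reversed product order) is the $i\leftrightarrow k$ swap built into \eqref{wt_transform}, which is what underlies Proposition~\ref{prop:Gc}; the NW$\to$SE orientation in \eqref{red_vertices} is the pictorial manifestation of that swap rather than an independent cause, but your conclusion is the same either way.
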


\begin{proof}
These expectation values can be directly compared with the partition functions written down in Sections \ref{sec:hl-F}--\ref{sec:hl-Gc}. Note that each one is clearly symmetric in its rapidity variables, by virtue of the commutation relations \eqref{commute2}.
\end{proof}

\subsection{Proving the skew Cauchy identity}

Equipped with the algebraic expressions of Proposition \ref{prop:expect}, it is possible to establish many interesting properties of the symmetric functions $\F_{\lambda/\mu}$ and $\G_{\lambda/\mu}$. Here we shall recall the proof of one such property, the {\it skew Cauchy identity.} We refer the reader to \cite{Borodin,BorodinP2} for a more detailed exposition of other properties.

\begin{thm}
\label{thm:hl_cauchy}
{\rm 
Fix two partitions $\mu \in {\rm Part}_n$ and $\nu \in {\rm Part}_{\ell+n}$, for some $\ell,n \geq 1$. For any complex numbers $u_1,\dots,u_{\ell};v_1,\dots,v_m$ such that $|(u_i-s)(v_j-s)| < |(1-s u_i)(1-s v_j)|$ for all $i,j$, the following summation identity holds:
\begin{align}
\label{HL_skew_C}
\sum_{\lambda} \F_{\lambda/\mu}(u_1,\dots,u_{\ell}) \Gc_{\lambda/\nu}(v_1,\dots,v_m)
=
\left(
\prod_{i=1}^{\ell}
\prod_{j=1}^{m}
\frac{1-q u_i v_j}{1-u_i v_j}
\right)
\sum_{\kappa} \F_{\nu/\kappa}(u_1,\dots,u_{\ell}) \Gc_{\mu/\kappa}(v_1,\dots,v_m),
\end{align}
where the left hand side is summed over partitions $\lambda \in {\rm Part}_{\ell+n}$, and the right hand side over partitions $\kappa \in {\rm Part}_{n}$ (observe that the right hand sum is finite, since the summand vanishes if $\kappa \not\subset \nu$ or if $\kappa \not\subset \mu$).
}
\end{thm}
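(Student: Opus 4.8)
The plan is to prove \eqref{HL_skew_C} algebraically, using the operator expressions of Proposition~\ref{prop:expect}, the exchange relation \eqref{HL_exchange} between $C$ and $\b{D}$ in the infinite-volume limit, the commutativity relations \eqref{commute2}, and the completeness of the partition states. First, rewrite the left-hand side as
\[
\sum_{\lambda} \bra{\mu}C(u_1)\cdots C(u_\ell)\ket{\lambda}\,\bra{\lambda}\b{D}(v_m)\cdots\b{D}(v_1)\ket{\nu}
=
\bra{\mu}C(u_1)\cdots C(u_\ell)\,\b{D}(v_m)\cdots\b{D}(v_1)\ket{\nu},
\]
where $\lambda$ has been summed out via the resolution of the identity $\sum_{\lambda}\ket{\lambda}\bra{\lambda}$ on the sector of states carrying $\ell+n$ paths on vertical edges. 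Only $\lambda\in{\rm Part}_{\ell+n}$ contribute: each $C$-operator lowers the total vertical path number by one while each $\b{D}$-operator preserves it, and the outer states $\bra{\mu}$, $\ket{\nu}$ pin down the two ends; this matches the range of summation in the statement.

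Second, transport all $\b{D}$'s to the left of all $C$'s. By \eqref{HL_exchange} one has $C(u)\b{D}(v)=\tfrac{1-quv}{1-uv}\b{D}(v)C(u)$, so the $m\ell$ swaps produce exactly the scalar $\prod_{i=1}^{\ell}\prod_{j=1}^{m}\tfrac{1-qu_iv_j}{1-u_iv_j}$ and
\[
\bra{\mu}C(u_1)\cdots C(u_\ell)\,\b{D}(v_m)\cdots\b{D}(v_1)\ket{\nu}
=
\left(\prod_{i=1}^{\ell}\prod_{j=1}^{m}\frac{1-qu_iv_j}{1-u_iv_j}\right)
\bra{\mu}\b{D}(v_m)\cdots\b{D}(v_1)\,C(u_1)\cdots C(u_\ell)\ket{\nu}.
\]
Inserting $\sum_{\kappa}\ket{\kappa}\bra{\kappa}$ between the $\b{D}$'s and the $C$'s --- now the sector with $n$ vertical paths, so $\kappa\in{\rm Part}_n$ --- and re-reading the two matrix elements through Proposition~\ref{prop:expect} as $\Gc_{\mu/\kappa}(v_1,\dots,v_m)$ and $\F_{\nu/\kappa}(u_1,\dots,u_\ell)$ yields precisely the right-hand side of \eqref{HL_skew_C}. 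The $\kappa$-sum is finite because $\F_{\nu/\kappa}=0$ unless $\kappa\subset\nu$ and $\Gc_{\mu/\kappa}=0$ unless $\kappa\subset\mu$; the relations \eqref{commute2} make the orderings of the $C(u_i)$'s and of the $\b{D}(v_j)$'s immaterial and render both sides symmetric in the $u$'s and in the $v$'s.

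I expect the main obstacle to be analytic rather than combinatorial: one must check that the semi-infinite operators $C(u_i)$ and $\b{D}(v_j)$ and the relevant products of them act well on the sectors above, that the exchange relation \eqref{HL_exchange} --- established only as an $L\to\infty$ limit under the assumption $|(u_i-s)(v_j-s)|<|(1-su_i)(1-sv_j)|$ --- is legitimately applicable here, and, above all, that the sum over $\lambda$ on the left of \eqref{HL_skew_C} converges absolutely, so that the two insertions of the resolution of the identity and the reordering of operators are justified. All of this rests on exactly the inequality hypothesized in the statement, which supplies geometric decay of the matrix elements as $|\lambda|\to\infty$; the required estimates are of the same type as those underpinning the infinite-volume limit constructed above, so what remains is bookkeeping. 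A purely diagrammatic alternative also exists --- glue an $\mathcal{R}$-vertex to the boundary of the two combined lattices and drag it across by iterated Yang--Baxter moves --- but the operator argument above is the most economical given the machinery already in place.
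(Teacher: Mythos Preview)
Your proposal is correct and follows essentially the same approach as the paper: define the mixed expectation value $\bra{\mu}C(u_1)\cdots C(u_\ell)\b{D}(v_m)\cdots\b{D}(v_1)\ket{\nu}$, insert a resolution of the identity to obtain the left-hand side, commute the $C$'s past the $\b{D}$'s via \eqref{HL_exchange} to produce the scalar factor, and insert a second resolution of the identity to obtain the right-hand side. Your additional remarks on the path-counting that pins down the sectors for $\lambda$ and $\kappa$, and on the analytic justification tied to the hypothesis $|(u_i-s)(v_j-s)|<|(1-su_i)(1-sv_j)|$, are welcome elaborations that the paper's proof leaves implicit.
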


\begin{proof}
Consider the following expectation value:
\begin{align*}
\mathcal{E}_{\mu,\nu}(u_1,\dots,u_{\ell};v_1,\dots,v_m)
:=
\bra{\mu} C(u_1) \dots C(u_{\ell}) \b{D}(v_m) \dots \b{D}(v_1) \ket{\nu}.
\end{align*}
Inserting the identity operator $\mathbbm{1} = \sum_{\lambda} \ket{\lambda} \bra{\lambda}$ between the final $C$-operator and the first $\b{D}$-operator, it is immediate from Proposition \ref{prop:expect} that
\begin{align}
\nonumber
\mathcal{E}_{\mu,\nu}(u_1,\dots,u_{\ell};v_1,\dots,v_m)
&=
\sum_{\lambda}
\bra{\mu} C(u_1) \dots C(u_{\ell}) \ket{\lambda} \bra{\lambda} \b{D}(v_m) \dots \b{D}(v_1) \ket{\nu}
\\
\label{E1}
&=
\sum_{\lambda} \F_{\lambda/\mu}(u_1,\dots,u_{\ell}) \Gc_{\lambda/\nu}(v_1,\dots,v_m).
\end{align}
On the other hand, by repeated use of the exchange relation \eqref{HL_exchange}, it is clear that one also has
\begin{align}
\nonumber
\mathcal{E}_{\mu,\nu}(u_1,\dots,u_{\ell};v_1,\dots,v_m)
&=
\left(
\prod_{i=1}^{\ell}
\prod_{j=1}^{m}
\frac{1-q u_i v_j}{1-u_i v_j}
\right)
\bra{\mu} \b{D}(v_m) \dots \b{D}(v_1) C(u_1) \dots C(u_{\ell}) \ket{\nu}
\\
\nonumber
&=
\left(
\prod_{i=1}^{\ell}
\prod_{j=1}^{m}
\frac{1-q u_i v_j}{1-u_i v_j}
\right)
\sum_{\kappa}
\bra{\mu} \b{D}(v_m) \dots \b{D}(v_1) \ket{\kappa} \bra{\kappa} C(u_1) \dots C(u_{\ell}) \ket{\nu}
\\
\label{E2}
&=
\left(
\prod_{i=1}^{\ell}
\prod_{j=1}^{m}
\frac{1-q u_i v_j}{1-u_i v_j}
\right)
\sum_{\kappa} \Gc_{\mu/\kappa}(v_1,\dots,v_m) \F_{\nu/\kappa}(u_1,\dots,u_{\ell}),
\end{align}
where we have again inserted a complete set of states $\mathbbm{1} = \sum_{\kappa} \ket{\kappa} \bra{\kappa}$ to establish the final equality. Comparing \eqref{E1} and \eqref{E2}, we recover the skew Cauchy identity \eqref{HL_skew_C}.

\end{proof}

\begin{cor}{\rm 
Assuming $|(u_i-s)(v_j-s)| < |(1-s u_i)(1-s v_j)|$ for all $i,j$, the following identity holds: 
\begin{align}
\label{hl-cauchy}
\sum_{\lambda}
\F_{\lambda}(u_1,\dots,u_{\ell})
\Gc_{\lambda}(v_1,\dots,v_{m})
=
\frac{(q;q)_{\ell}}{\prod_{i=1}^{\ell}(1-su_i)}
\prod_{i=1}^{\ell}
\prod_{j=1}^{m}
\left(
\frac{1-qu_i v_j}{1-u_i v_j}
\right),
\end{align}
where the sum is taken over all partitions $\lambda \in {\rm Part}_{\ell}$.
}
\end{cor}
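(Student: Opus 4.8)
Proposed proof plan:

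\emph{The plan} is to read off \eqref{hl-cauchy} as the $\mu=\varnothing$, $\nu=0^{\ell}$ specialization of the skew Cauchy identity \eqref{HL_skew_C}, taken in the boundary case $n=0$. Although Theorem~\ref{thm:hl_cauchy} is stated for $\ell,n\ge 1$, I would first note that its proof uses nothing beyond two insertions of the resolution of identity and the infinite-volume exchange relation \eqref{HL_exchange}, none of which requires $n\ge 1$; hence \eqref{HL_skew_C} continues to hold for $n=0$, in which case the sum on the right is over the single partition $\kappa=\varnothing\in{\rm Part}_0$ and the sum on the left is over $\lambda\in{\rm Part}_{\ell}$.

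Next I would substitute $\mu=\varnothing$ and $\nu=0^{\ell}$ into this $n=0$ identity. The left-hand side becomes $\sum_{\lambda\in{\rm Part}_{\ell}}\F_{\lambda/\varnothing}(u_1,\dots,u_{\ell})\,\Gc_{\lambda/0^{\ell}}(v_1,\dots,v_m)$, and here $\F_{\lambda/\varnothing}=\F_{\lambda}$ and $\Gc_{\lambda/0^{\ell}}=\Gc_{\lambda}$ by the very definitions of the non-skew functions (recall \eqref{normaliz}), so this is exactly the left-hand side of \eqref{hl-cauchy}. The right-hand side becomes $\big(\prod_{i=1}^{\ell}\prod_{j=1}^{m}\tfrac{1-qu_iv_j}{1-u_iv_j}\big)\,\F_{0^{\ell}/\varnothing}(u_1,\dots,u_{\ell})\,\Gc_{\varnothing/\varnothing}(v_1,\dots,v_m)$, with $\F_{0^{\ell}/\varnothing}=\F_{0^{\ell}}$ and $\Gc_{\varnothing/\varnothing}=1$ (an all-empty-boundary partition function, each of whose vertices has weight $1$). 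Thus the proof reduces to the single evaluation $\F_{0^{\ell}}(u_1,\dots,u_{\ell})=\tfrac{(q;q)_{\ell}}{\prod_{i=1}^{\ell}(1-su_i)}$.

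This last step I would do by a direct look at the lattice. In the lattice defining $\F_{0^{\ell}}$, the whole top boundary lies in column $0$ (since $0^{\ell}$ has $m_0=\ell$ and $m_i=0$ for $i\ge 1$), the bottom boundary is empty, and one path enters each of the $\ell$ rows from the left. Since lattice paths move only north or east, every incoming path is forced to turn north at column $0$ and go straight up, so there is a unique admissible configuration: the vertex in column $0$ and row $i$ (numbered from the bottom, carrying spectral parameter $u_i$) has incoming vertical occupation $i-1$ and outgoing vertical occupation $i$, hence is of type $w_{u_i}(i-1,1;i,0)=\tfrac{1-q^{i}}{1-su_i}$ by \eqref{vertices}, while every vertex outside column $0$ is empty with weight $1$. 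Multiplying yields $\F_{0^{\ell}}(u_1,\dots,u_{\ell})=\prod_{i=1}^{\ell}\tfrac{1-q^{i}}{1-su_i}=\tfrac{(q;q)_{\ell}}{\prod_{i=1}^{\ell}(1-su_i)}$, which completes the deduction of \eqref{hl-cauchy}.

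I do not expect a serious obstacle. The one point that needs care is the bookkeeping forced by our convention that partitions may carry parts of size zero: the empty partition $\varnothing$ and the partition $0^{\ell}$ are genuinely different states in $\mathbb{V}$, and it is the latter that has to be used for $\nu$ in order that $\Gc_{\lambda/\nu}$ collapse to the ordinary $\Gc_{\lambda}$ — taking $\nu=\varnothing$ instead would change the left-hand sum to a trivial one.
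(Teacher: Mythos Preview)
Your proof is correct and follows essentially the same route as the paper: specialize the skew Cauchy identity \eqref{HL_skew_C} at $\mu=\varnothing$, $\nu=0^{\ell}$, observe that the right-hand sum collapses to $\kappa=\varnothing$, and then evaluate $\F_{0^{\ell}}(u_1,\dots,u_{\ell})$ directly from the lattice (the paper records this computation in a footnote as the $\ell$-vertex $w_{\{u_1,\dots,u_{\ell}\}}(0,\{1,\dots,1\};\ell,\{0,\dots,0\})$) together with $\Gc_{\varnothing}=1$. Your extra care in noting that the $n\geq 1$ hypothesis of Theorem~\ref{thm:hl_cauchy} is not actually used, and in distinguishing $\varnothing$ from $0^{\ell}$, is warranted but does not change the argument.
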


\begin{proof}
We make the choice $\mu = \varnothing$ and $\nu = 0^{\ell}$ in \eqref{HL_skew_C}. This converts the left hand side into that of \eqref{hl-cauchy}, while the sum on the right hand side now consists of a single term, corresponding to 
$\kappa = \varnothing$. One can easily show that\footnote{It is clear from its definition as a partition function that $\F_{0^{\ell}}(u_1,\dots,u_{\ell})$ is equal to the $\ell$-vertex $w_{\{u_1,\dots,u_{\ell}\}}(0,\{1,\dots,1\};\ell,\{0,\dots,0\})$, whose weight is precisely $(q;q)_{\ell} \prod_{i=1}^{\ell}(1-su_i)^{-1}$.} 
\begin{align*}
\F_{0^{\ell}}(u_1,\dots,u_{\ell}) 
= 
\frac{(q;q)_{\ell}}{\prod_{i=1}^{\ell} (1-su_i)},
\quad\quad
\Gc_{\varnothing}(v_1,\dots,v_m) = 1.
\end{align*}
This accounts for the new factors appearing on the right hand side of \eqref{hl-cauchy}, and completes the proof.
\end{proof}

\section{Fusion}
\label{sec:fusion}

\subsection{Specializing variables to geometric progressions}
\label{sec:geom}

Specializing spectral parameters to geometric progressions plays a crucial role in the fusion procedure. Since the \wh polynomials are ultimately to be obtained under such specializations, we develop some rather general notation here which will prove useful in our calculations. 

Let $\{J_0,\dots,J_n\}$ be some arbitrary set of positive integers and further, define their partial sums $\mathcal{J}_{i}:= \sum_{k=0}^{i} J_k$. We will often write $\mathcal{J}_n \equiv \mathcal{J}$ for the sum of all $J_i$. Then for any set of variables $\{u_1,\dots,u_{\mathcal{J}}\}$, we define its $\{J_0,\dots,J_n;z_0,\dots,z_n\}$-specialization to be
\begin{align}
\label{geom_spec}
\Big\{
u_{(\mathcal{J}_{i-1}+1)}, 
u_{(\mathcal{J}_{i-1}+2)}, 
\dots,
u_{(\mathcal{J}_{i-1}+J_i)}
\Big\}
=
\left\{z_i,q z_i,\dots,q^{J_i-1}z_i\right\}
\end{align}
for all $0 \leq i \leq n$, where by agreement $\mathcal{J}_{-1}=0$. We use the notation $\rho_{\{J_0,\dots,J_n;z_0,\dots,z_n\}}(\cdot)$ to indicate that a $\{J_0,\dots,J_n;z_0,\dots,z_n\}$-specialization has been taken.

For the purposes of fusion itself, we will only require the $n=0$ case of the above. In that case we have a single positive integer $J_0 \equiv J$, and a $\{J;u\}$-specialization of $\{u_1,\dots,u_J\}$ is simply given by
\begin{align}
\label{geom_spec0}
\left\{
u_1, 
u_2, 
\dots,
u_J
\right\}
=
\left\{u,qu,\dots,q^{J-1}u\right\}.
\end{align}

\subsection{Fusion}

We recall the basics of the fusion procedure in integrable lattice models \cite{KulishRS,KirillovR}, mainly following the conventions of Section 5 of \cite{BorodinP2} (see also \cite{CorwinP}, for a more probabilistic interpretation of fusion). The central object is the $J$-vertex, as defined in \eqref{n-vert}, whose spectral parameters $\{u_1,u_2,\dots,u_J\}$ have been $\{J;u\}$-specialized; \ie\ specialized as in \eqref{geom_spec0}. One then defines a ``fused'' vertex as follows:
\begin{align}
\label{fused_defn}
w^{(J)}_u
\left(
\begin{gathered}
\begin{tikzpicture}[scale=0.5,baseline=(current bounding box.center)]
\draw[lgray,line width=5pt] (-1,0) -- (1,0);
\draw[lgray,line width=5pt] (0,-1) -- (0,1);
\node[left] at (-0.8,0) {\tiny $j$};\node[right] at (0.8,0) {\tiny $\ell$};
\node[below] at (0,-0.8) {\tiny $i$};\node[above] at (0,0.8) {\tiny $k$};
\end{tikzpicture}
\end{gathered}
\right)
=
\sum_{\substack{0\leq a_1,\dots,a_J \leq 1: |a| = j \\ 0 \leq b_1,\dots,b_J \leq 1: |b|=\ell}}
\frac{q^{\sum_{m=1}^{J} (m-1) a_m}}{Z_j(J)}
\times
w_{\{u,qu,\dots,q^{J-1}u\}}
\left(
\begin{gathered}
\begin{tikzpicture}[scale=0.6,baseline=(current bounding box.center)]
\draw[lgray,line width=5pt] (0,-1) -- (0,3);
\foreach\y in {0,...,2}{
\draw[lgray,thick] (-1,\y) -- (1,\y);
}
\node[left] at (-1,0) {$a_1$};
\node[left] at (-1.3,1.3) {$\vdots$};
\node[left] at (-1,2) {$a_J$};
\node[right] at (1,0) {$b_1$};
\node[right] at (1.3,1.3) {$\vdots$};
\node[right] at (1,2) {$b_J$};
\node[below] at (0,-1) {$i$};
\node[above] at (0,3) {$k$};
\end{tikzpicture}
\end{gathered}
\right),
\end{align}
obtained by fixing the bottom and top of the $J$-vertex to the states $i$ and $k$, respectively, while summing the left and right edges over all possible ways of assigning $j$ and $\ell$ arrows to $J$ sites. Here we have defined the normalization
\begin{align*}
Z_j(J)
=
\sum_{0 \leq c_1,\dots,c_J \leq 1: |c| = j}
q^{\sum_{m=1}^{J} (m-1) c_m}
=
q^{j(j-1)/2}
\frac{(q;q)_J}{(q;q)_j (q;q)_{J-j}}.
\end{align*}
Graphically, we represent fused vertices by the intersection of a thick horizontal and vertical line. This is supposed to indicate that up to $J$ lattice paths can now occupy a horizontal edge, while the occupation numbers along vertical edges continue to be unbounded. As before we interchange freely between the graphical version of vertices, and a purely algebraic notation:
\begin{align}
\label{fused_notation}
w^{(J)}_u
\left(
\begin{gathered}
\begin{tikzpicture}[scale=0.5,baseline=(current bounding box.center)]
\draw[lgray,line width=5pt] (-1,0) -- (1,0);
\draw[lgray,line width=5pt] (0,-1) -- (0,1);
\node[left] at (-0.8,0) {\tiny $j$};\node[right] at (0.8,0) {\tiny $\ell$};
\node[below] at (0,-0.8) {\tiny $i$};\node[above] at (0,0.8) {\tiny $k$};
\end{tikzpicture}
\end{gathered}
\right)
\equiv
w^{(J)}_u(i,j;k,\ell),
\quad
i,k \in \mathbb{Z}_{\geq 0},
\quad
0 \leq j,\ell \leq J.
\end{align}
One of the key features of the fused vertices is that their horizontal concatenation corresponds with horizontal concatenation in the original vertex model:
\begin{thm}
\label{thm:horizontal_join}
{\rm
For any $L \geq 1$, $0 \leq j,\ell \leq J$ and $\{i_0,\dots,i_L\}, \{k_0,\dots,k_L\} \in \mathbb{Z}_{\geq 0}$, one has
\begin{align}
\label{join}
w^{(J)}_u
\left(
\begin{tikzpicture}[scale=0.5,baseline=(current bounding box.center)]
\draw[lgray,line width=5pt] (-1,0) -- (3,0);
\draw[lgray,line width=5pt] (0,-1) -- (0,1);
\draw[lgray,line width=5pt] (1,-1) -- (1,1);
\draw[lgray,line width=5pt] (2,-1) -- (2,1);
\node[left] at (-0.8,0) {\tiny $j$};\node[right] at (2.8,0) {\tiny $\ell$};
\node[below] at (0,-0.8) {\tiny $i_0$};\node[above] at (0,0.8) {\tiny $k_0$};
\node[below] at (1,-0.8) {\tiny $\cdots$};\node[above] at (1,0.8) {\tiny $\cdots$};
\node[below] at (2,-0.8) {\tiny $i_L$};\node[above] at (2,0.8) {\tiny $k_L$};
\end{tikzpicture}
\right)
=
\sum_{\substack{a_1,\dots,a_J: |a| = j \\ b_1,\dots,b_J: |b|=\ell}}
\frac{q^{\sum_{m=1}^{J} (m-1) a_m}}{Z_j(J)}
\times
w_{\{u,qu,\dots,q^{J-1}u\}}
\left(
\begin{tikzpicture}[scale=0.6,baseline=(current bounding box.center)]
\draw[lgray,line width=5pt] (0,-1) -- (0,3);
\draw[lgray,line width=5pt] (1,-1) -- (1,3);
\draw[lgray,line width=5pt] (2,-1) -- (2,3);
\foreach\y in {0,...,2}{
\draw[lgray,thick] (-1,\y) -- (3,\y);
}
\node[left] at (-1,0) {$a_1$};
\node[left] at (-1.3,1.3) {$\vdots$};
\node[left] at (-1,2) {$a_J$};
\node[right] at (3,0) {$b_1$};
\node[right] at (3.3,1.3) {$\vdots$};
\node[right] at (3,2) {$b_J$};
\node[below] at (0,-1) {$i_0$};
\node[above] at (0,3) {$k_0$};
\node[below] at (1,-1) {$\cdots$};
\node[above] at (1,3) {$\cdots$};
\node[below] at (2,-1) {$i_L$};
\node[above] at (2,3) {$k_L$};
\end{tikzpicture}
\right).
\end{align}
}
\end{thm}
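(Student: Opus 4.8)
The plan is to reduce \eqref{join} to a single ``transfer'' identity for one $\{J;u\}$-specialized column of vertices, and then to propagate it across all $L+1$ columns so that the fusion normalizations telescope. First I would fix notation: regard the column of $J$ vertices with spectral parameters $u,qu,\dots,q^{J-1}u$ and with bottom and top states frozen to $i_p,k_p$ as an operator $K_p$ whose matrix entries $K_p(a;b):=w_{\{u,qu,\dots,q^{J-1}u\}}(i_p,\{a_1,\dots,a_J\};k_p,\{b_1,\dots,b_J\})$ are indexed by configurations $a,b\in\{0,1\}^J$ of its left and right edges, acting on covectors by $\langle a|K_p=\sum_b K_p(a;b)\langle b|$. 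Since each such weight factorizes into single-vertex weights \eqref{vertices} over the $J$ rows, the partition function on the right of \eqref{join} (with internal horizontal edges summed, as in \eqref{finite_action}) is exactly $K_0K_1\cdots K_L$ evaluated between the left data $\{a_m\}$ (with $|a|=j$, carrying the prefactor $q^{\sum_m(m-1)a_m}/Z_j(J)$) and the right data $\{b_m\}$ (with $|b|=\ell$, unweighted). Dually, the left side of \eqref{join} unfolds to $\sum_{e_1,\dots,e_L\in\{0,\dots,J\}}\prod_{p=0}^{L}w^{(J)}_u(i_p,e_p;k_p,e_{p+1})$ with $e_0=j$, $e_{L+1}=\ell$, the sum running over internal \emph{fused} edge labels.

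The crux is the single-column fusion identity: for fixed $i,k$ and every right configuration $b$ with $|b|=\ell$,
\begin{align*}
\sum_{a:\,|a|=j}q^{\sum_{m=1}^{J}(m-1)a_m}\,w_{\{u,qu,\dots,q^{J-1}u\}}(i,\{a\};k,\{b\})
=
q^{\sum_{m=1}^{J}(m-1)b_m}\,\frac{Z_j(J)}{Z_\ell(J)}\,w^{(J)}_u(i,j;k,\ell).
\end{align*}
Informally: once the incoming edges of one specialized column are placed in the $q$-weighted combination of total charge $j$, the outgoing edges emerge automatically in the $q$-weighted combination of the (conservation-dictated) complementary charge $\ell$, with scalar $\tfrac{Z_j(J)}{Z_\ell(J)}w^{(J)}_u(i,j;k,\ell)$; dividing by $Z_j(J)$ and summing over $b$ with $|b|=\ell$ recovers \eqref{fused_defn} via $\sum_{|b|=\ell}q^{\sum_m(m-1)b_m}=Z_\ell(J)$, so the identity is at least self-consistent. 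Proving it is what I expect to be the main obstacle: the standard route is an induction on $J$ using the Yang--Baxter equation \eqref{yb-eqn} to slide an auxiliary $\mathcal{R}$-matrix evaluated at $u=q$ up the column, exploiting the fact that \eqref{R-mat} degenerates there (its central block drops to rank one), which is exactly what keeps the $q$-weighted combination invariant. This is the content of fusion \cite{KulishRS,KirillovR} in the form developed in Section 5 of \cite{BorodinP2}, which I would either cite directly or re-derive in the present normalization.

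Granting the single-column identity, the remaining step I would carry out as a telescoping calculation. For $0\le e\le J$ set $\langle e|:=\sum_{a:\,|a|=e}q^{\sum_m(m-1)a_m}\langle a|$; the displayed identity says precisely $\langle e|K_p=\sum_{e'=0}^{J}\frac{Z_e(J)}{Z_{e'}(J)}\,w^{(J)}_u(i_p,e;k_p,e')\,\langle e'|$. Applying $K_0,K_1,\dots,K_L$ in turn to $Z_j(J)^{-1}\langle j|$, the ratios of $Z$'s cancel in chain and one finds
\begin{align*}
\frac{1}{Z_j(J)}\,\langle j|\,K_0K_1\cdots K_L
=
\sum_{e_1,\dots,e_{L+1}}\frac{1}{Z_{e_{L+1}}(J)}\Bigl(\prod_{p=0}^{L}w^{(J)}_u(i_p,e_p;k_p,e_{p+1})\Bigr)\langle e_{L+1}|,\qquad e_0=j.
\end{align*}
Pairing with the unweighted right-boundary vector $\sum_{b:\,|b|=\ell}|b\rangle$ and using $\langle e|\bigl(\sum_{|b|=\ell}|b\rangle\bigr)=Z_\ell(J)\,\delta_{e,\ell}$ collapses $e_{L+1}$ to $\ell$ and cancels $Z_\ell(J)$, leaving $\sum_{e_1,\dots,e_L}\prod_{p=0}^{L}w^{(J)}_u(i_p,e_p;k_p,e_{p+1})$ with $e_{L+1}=\ell$, i.e.\ the left side of \eqref{join}. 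On the other hand, by the definitions of $\langle j|$ and of the $K_p$, the same quantity equals $Z_j(J)^{-1}$ times $\sum_{a:\,|a|=j,\ b:\,|b|=\ell}q^{\sum_m(m-1)a_m}\,w_{\{u,qu,\dots,q^{J-1}u\}}$ evaluated on the full $J\times(L+1)$ grid with left boundary $a$ and right boundary $b$ --- that is, the right side of \eqref{join}, completing the proof. If one prefers, the same argument reorganizes as an induction on $L$, peeling off the rightmost column and invoking the single-column identity only there; the covector formulation simply handles all $L$ uniformly and makes the cancellation of the $Z$-factors transparent.
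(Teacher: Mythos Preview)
Your proposal is correct and follows essentially the same approach as the paper: the single-column identity you isolate is exactly what the paper derives (there via the directly-verified two-vertex relation \eqref{2-vert-rel} rather than your proposed Yang--Baxter induction on $J$), and your covector telescoping is a clean packaging of the paper's explicit $L=1$ computation extended to general $L$.
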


\begin{proof}
We restrict our attention to the case $L = 1$, since the result for larger values of $L$ is proved in exactly the same way.
We take the right hand side of \eqref{join} and explicitly perform the summation over the internal horizontal edges. These internal edges are summed over all ways of distributing $n$ arrows, where (by conservation) 
$n= i_0+j-k_0 = k_1+\ell-i_1$. The right hand side of \eqref{join} is thus
\begin{align}
\label{rhs_concat}
\sum_{\substack{a_1,\dots,a_J: |a| = j 
\\ b_1,\dots,b_J: |b|=\ell \\ c_1,\dots,c_J: |c|=n}}
\frac{q^{\sum_{m=1}^{J} (m-1) a_m}}{Z_j(J)}
\times
w_{\{u,qu,\dots,q^{J-1}u\}}
\left(
\begin{tikzpicture}[scale=0.6,baseline=(current bounding box.center)]
\draw[lgray,line width=5pt] (0,-1) -- (0,3);
\foreach\y in {0,...,2}{
\draw[lgray,thick] (-1,\y) -- (1,\y);
}
\node[left] at (-1,0) {$a_1$};
\node[left] at (-1.3,1.3) {$\vdots$};
\node[left] at (-1,2) {$a_J$};
\node[right] at (1,0) {$c_1$};
\node[right] at (1.3,1.3) {$\vdots$};
\node[right] at (1,2) {$c_J$};
\node[below] at (0,-1) {$i_0$};
\node[above] at (0,3) {$k_0$};
\end{tikzpicture}
\right)
w_{\{u,qu,\dots,q^{J-1}u\}}
\left(
\begin{tikzpicture}[scale=0.6,baseline=(current bounding box.center)]
\draw[lgray,line width=5pt] (0,-1) -- (0,3);
\foreach\y in {0,...,2}{
\draw[lgray,thick] (-1,\y) -- (1,\y);
}
\node[left] at (-1,0) {$c_1$};
\node[left] at (-1.3,1.3) {$\vdots$};
\node[left] at (-1,2) {$c_J$};
\node[right] at (1,0) {$b_1$};
\node[right] at (1.3,1.3) {$\vdots$};
\node[right] at (1,2) {$b_J$};
\node[below] at (0,-1) {$i_1$};
\node[above] at (0,3) {$k_1$};
\end{tikzpicture}
\right),
\end{align}
or using algebraic notation,
\begin{multline}
\label{rhs_concat2}
\sum_{\substack{a_1,\dots,a_J: |a| = j 
\\ b_1,\dots,b_J: |b|=\ell \\ c_1,\dots,c_J: |c|=n}}
\frac{q^{\sum_{m=1}^{J} (m-1) a_m}}{Z_j(J)}
w_{\{u,qu,\dots,q^{J-1}u\}}
\Big(
i_0,\{a_1,\dots,a_J\};k_0,\{c_1,\dots,c_J\}
\Big)
\\
\times
w_{\{u,qu,\dots,q^{J-1}u\}}
\Big(
i_1,\{c_1,\dots,c_J\};k_1,\{b_1,\dots,b_J\}
\Big).
\end{multline}
In order to progress further, we note the following property on $2$-vertices, which can be easily checked for all 
$i,k \in \mathbb{Z}_{\geq 0}$ \cite{BorodinP2}:
\begin{align}
\label{2-vert-rel}
\sum_{0 \leq a_1,a_2 \leq 1}
q^{a_2}
w_{\{u,qu\}} \Big( i,\{a_1,a_2\}; k, \{0,1\} \Big)
=
\sum_{0 \leq a_1,a_2 \leq 1}
q^{a_2+1}
w_{\{u,qu\}} \Big( i,\{a_1,a_2\}; k, \{1,0\} \Big).
\end{align}
Because the spectral parameters in \eqref{rhs_concat} form geometric sequences with ratio $q$, we are able to apply the relation \eqref{2-vert-rel} repeatedly to the first $J$-vertex in \eqref{rhs_concat}, treating all indices apart from $a_1,\dots,a_J$ as fixed. We find that
\begin{multline}
\sum_{a_1,\dots,a_J: |a| = j}
\frac{q^{\sum_{m=1}^{J} (m-1) a_m}}{Z_j(J)}
w_{\{u,qu,\dots,q^{J-1}u\}}
\Big(
i_0,\{a_1,\dots,a_J\};k_0,\{c_1,\dots,c_J\}
\Big)
\\
=
q^{\sum_{m=1}^{J} (m-1) c_m}
\sum_{a_1,\dots,a_J: |a| = j}
\frac{q^{\sum_{m=1}^{J} (m-1) a_m}}{Z_j(J)}
w_{\{u,qu,\dots,q^{J-1}u\}}
\Big(
i_0,\{a_1,\dots,a_J\};k_0,\{\underbrace{1,\dots,1}_{n},\underbrace{0,\dots,0}_{J-n}\}
\Big)
\\
=
\frac{q^{\sum_{m=1}^{J} (m-1) c_m}}{Z_n(J)}
\sum_{\substack{a_1,\dots,a_J: |a| = j \\ d_1,\dots,d_J: |d| = n}}
\frac{q^{\sum_{m=1}^{J} (m-1) a_m}}{Z_j(J)}
w_{\{u,qu,\dots,q^{J-1}u\}}
\Big(
i_0,\{a_1,\dots,a_J\};k_0,\{d_1,\dots,d_J\}
\Big),
\end{multline}
where the latter expression is by definition equal to $w_u^{(J)}(i_0,j;k_0,n) (q^{\sum_{m=1}^{J} (m-1) c_m})/Z_n(J)$. Using this result in \eqref{rhs_concat2}, the sums over $c_1,\dots,c_J$ and $b_1,\dots,b_J$ can then be taken. The result of the calculation is $\sum_{n} w_u^{(J)}(i_0,j;k_0,n) w_u^{(J)}(i_1,n;k_1,\ell)$, which is just the left hand side of \eqref{rhs_concat2}.

\end{proof}

\subsection{Recursion relation}

It is easy to see that the fused vertex \eqref{fused_defn} obeys the following recursion relation, obtained by summing over all possible configurations of the lowest vertex in \eqref{fused_defn}:
\begin{align*}
w^{(J)}_u
\left(\fvert{i}{j}{k}{l}{0.5}\right)
&=
\frac{q^j Z_j(J-1)}{Z_j(J)}
\sum_{n=0}^{1}
w_u
\left(\vert{i}{0}{i-n}{n}{0.5}\right)
w^{(J-1)}_{qu}
\left(\fvert{i-n}{j}{k}{\ell-n}{0.5}\right)
\\
&+
\frac{q^{j-1} Z_{j-1}(J-1)}{Z_j(J)}
\sum_{n=0}^{1}
w_u
\left(\vert{i}{1}{i-n+1}{n}{0.5}\right)
w^{(J-1)}_{qu}
\left(\fvert{i-n+1}{j-1}{k}{\ell-n}{0.5}\right).
\end{align*}
Simplifying and converting to algebraic notation, this reads
\begin{align}
\label{fused_rec}
w^{(J)}_u(i,j;k,\ell)
&=
\frac{q^j-q^J}{1-q^J}
\sum_{n=0}^{1}
w_u(i,0;i-n,n) w^{(J-1)}_{qu}(i-n,j;k,\ell-n)
\\
\nonumber
&+
\frac{1-q^j}{1-q^J}
\sum_{n=0}^{1}
w_u(i,1;i-n+1,n) w^{(J-1)}_{qu}(i-n+1,j-1;k,\ell-n).
\end{align}
The recursion relation \eqref{fused_rec} can be solved to yield an explicit formula for the fused vertex weights. The solution is, however, rather complicated and involves $q$-hypergeometric functions \cite{Mangazeev,Borodin,CorwinP}:
%
%
\begin{multline}
\label{fused_weight}
w^{(J)}_u(i,j;k,\ell)
=
\\
\bm{1}_{i+j = k+\ell}
\frac{(-1)^{\ell-i}\ q^{i(i+2j-1)/2}\ s^{j-k}\ u^i\ (u/s;q)_{\ell-i}\ (s^2;q)_i}
{(su;q)_{k+\ell}\ (q^{J-j+1};q)_{j-\ell}\ (q;q)_i\ (s^2;q)_k}\ 
_{4}\bar{\phi}_{3}
\left( 
\begin{array}{c} 
q^{-k}; q^{-i}, q^{J} s u, qs/u \vspace{0.1cm} \\ s^2,q^{\ell-i+1},q^{J-k-\ell+1}
\end{array}
\Bigg|
q,q
\right),
\end{multline}
where
\begin{align*}
_{r+1}\bar{\phi}_{r}
\left(
\begin{array}{c}
q^{-n};a_1,\dots,a_r 
\\
b_1,\dots,b_r
\end{array}
\Big|
q,z
\right)
:=
\sum_{k=0}^{n}
z^k \frac{(q^{-n};q)_k}{(q;q)_k}
\prod_{i=1}^{r}
(a_i;q)_k (b_i q^k;q)_{n-k}.
\end{align*}
A higher-rank version of the Boltzmann weights \eqref{fused_weight} has recently been considered in \cite{BosnjakM}.

One can write a fused version of the Yang--Baxter equation \eqref{yb-eqn}, which now features an $\mathcal{R}$-matrix acting in a tensor product of spin-$J_1/2$ and spin-$J_2/2$ representations \cite{Mangazeev} (whereas the $\mathcal{R}$-matrix \eqref{R-mat} acts in a tensor product of spin-$1/2$ representations). The entries of the $\mathcal{R}$-matrix have essentially the same functional form as \eqref{fused_weight}, however we will bypass writing this Yang--Baxter equation in its full generality, quoting instead a much simpler special case of it below.

\subsection{Simplified vertex weights after setting $u=s$}

The vertex weights \eqref{fused_weight} greatly simplify at $u=s$, as is explained in \cite{Borodin,BorodinP2,BosnjakM}. Quoting Proposition 6.10 from \cite{BorodinP2}, we have 
\begin{align}
\label{weight_u=s}
w^{(J)}_{s}(i,j;k,\ell)
=
(\bm{1}_{i+j = k+\ell})
(\bm{1}_{i \geq \ell})
(-s q^J)^{\ell}
\frac{(q^{-J};q)_{\ell}\ (s^2 q^J;q)_{i-\ell}\ (q;q)_{k}}
{(q;q)_{\ell}\ (q;q)_{i-\ell}\ (s^2;q)_{k}},
\end{align}
for $i,k \in \mathbb{Z}_{\geq 0}$, $0 \leq j,\ell \leq J$. 
Let us also define an $\mathcal{R}$-matrix with components given by
\begin{align}
\label{fused_R}
\mathcal{R}^{(J,I)}(i,j;k,\ell)
=
(\bm{1}_{i+j = k+\ell})
(\bm{1}_{i \geq \ell})
(q^{J-I})^{\ell}
\frac{(q^{-J};q)_{\ell}\ (q^{J-I};q)_{i-\ell}\ (q;q)_{i}}
{(q;q)_{\ell}\ (q;q)_{i-\ell}\ (q^{-I};q)_{i}},
\end{align}
where $0 \leq i,k \leq I$ and $0 \leq j,\ell \leq J$.

\begin{thm}{\rm
Let $J_1$, $J_2$ be two positive integers, and fix two triples $(i_1,i_2,i_3)$, $(j_1,j_2,j_3)$ such that $0 \leq i_1,j_1 \leq J_1$, $0 \leq i_2,j_2 \leq J_2$ and $i_3,j_3 \in \mathbb{Z}_{\geq0}$. The Yang--Baxter equation holds:
\begin{multline}
\label{fused_yb}
\sum_{k_1=0}^{J_1}
\sum_{k_2=0}^{J_2}
\sum_{k_3=0}^{\infty}
\mathcal{R}^{(J_2,J_1)}(i_2,i_1;k_2,k_1)
w^{(J_1)}_{s}(i_3,k_1;k_3,j_1)
w^{(J_2)}_{s}(k_3,k_2;j_3,j_2)
=
\\
\sum_{k_1=0}^{J_1}
\sum_{k_2=0}^{J_2}
\sum_{k_3=0}^{\infty}
w^{(J_2)}_{s}(i_3,i_2;k_3,k_2)
w^{(J_1)}_{s}(k_3,i_1;j_3,k_1)
\mathcal{R}^{(J_2,J_1)}(k_2,k_1;j_2,j_1).
\end{multline}
}
\end{thm}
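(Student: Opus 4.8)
The plan is to deduce \eqref{fused_yb} from the unfused Yang--Baxter equation \eqref{yb-eqn} by running the fusion procedure on the two auxiliary (horizontal) spaces at once, and then specializing the spectral parameters. Concretely, I would work on a lattice consisting of a single bosonic vertical line crossed by $J_1+J_2$ spin-$1/2$ horizontal lines: the first $J_1$ carrying spectral parameters $\{u,qu,\dots,q^{J_1-1}u\}$ and the last $J_2$ carrying $\{v,qv,\dots,q^{J_2-1}v\}$, together with a rectangular block of $J_1J_2$ unfused $\mathcal{R}$-vertices \eqref{R-mat} linking every line of the first group to every line of the second. Starting with this $\mathcal{R}$-block to the left of the vertical line and applying \eqref{yb-eqn} repeatedly --- one unfused $\mathcal{R}$-vertex at a time, in the standard order prescribed by the wiring diagram --- slides the whole block to the right of the vertical line. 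Since \eqref{yb-eqn} is an identity in the spectral parameters, the geometric specialization is automatic and no limiting argument is needed at this stage; the outcome is an operator identity on $V\otimes W^{\otimes(J_1+J_2)}$ with exactly the combinatorial shape of \eqref{fused_yb}, but with all spaces still unfused.

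\textbf{Applying the fusion projectors.} Next I would contract the external horizontal states against the fusion weights $q^{\sum_m(m-1)a_m}/Z_j(J)$ of \eqref{fused_defn}, simultaneously on both groups of horizontal lines. By Theorem \ref{thm:horizontal_join} (and the two-vertex relation \eqref{2-vert-rel} underlying it), the string of $J_1$ crossings of the first group with the vertical line collapses to the fused vertex $w^{(J_1)}_u$, the string of $J_2$ crossings collapses to $w^{(J_2)}_v$, and the $J_1\times J_2$ block of unfused $\mathcal{R}$-vertices collapses to the fused $\mathcal{R}$-matrix depending only on the ratio $u/v$, with the functional form of \eqref{fused_weight}. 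The only point requiring genuine checking here is that the fusion projectors may be commuted through the $\mathcal{R}$-block; but an $\mathcal{R}$-vertex is itself a six-vertex vertex with spectral parameter a ratio of two entries of the geometric progressions, so the relevant intertwining is once more an instance of \eqref{2-vert-rel} with $\mathcal{R}$-vertices in place of ordinary vertices, and the argument of Theorem \ref{thm:horizontal_join} applies verbatim.

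\textbf{Specializing $u=v=s$.} Finally I would set $u=v=s$. By Proposition 6.10 of \cite{BorodinP2}, quoted here as \eqref{weight_u=s}, the two fused vertices become $w^{(J_1)}_s$ and $w^{(J_2)}_s$, matching the summands of \eqref{fused_yb}. It remains to recognise that at $u/v=1$ the fused $\mathcal{R}$-matrix equals \eqref{fused_R}. For this I would use the standard fact that the spin-$J_1/2$ -- spin-$J_2/2$ $\mathcal{R}$-matrix is obtained from the L-operator \eqref{fused_weight} by the substitution $s^2\mapsto q^{-J_1}$, under which the bosonic quantum space collapses to the $(J_1+1)$-dimensional spin-$J_1/2$ representation; carrying out this substitution together with $u=s$ in \eqref{fused_weight} terminates the $_4\bar\phi_3$ series and collapses the whole expression --- prefactor $(q^{J-I})^\ell$ and normalisation $(q;q)_i/(q^{-I};q)_i$ included --- to \eqref{fused_R}, by exactly the $q$-Pochhammer computation that \cite{BorodinP2} uses to pass from \eqref{fused_weight} to \eqref{weight_u=s}, now run with $s^2$ replaced by $q^{-J_1}$.

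\textbf{Main obstacle.} I expect the real difficulty to sit in this last step: pinning down the precise dictionary --- which power of $q$ replaces $s^2$, and which value of the spectral ratio --- that makes the general fused $\mathcal{R}$-matrix coincide with \eqref{fused_R} on the nose, including the correct normalisation of both sides, and then verifying that coincidence. Once the dictionary is fixed, the wiring-diagram application of \eqref{yb-eqn} and the $q$-series bookkeeping are routine, if lengthy. As a self-contained fallback I would note that, since both sides of \eqref{fused_yb} are explicit finite sums by \eqref{weight_u=s} and \eqref{fused_R}, the identity is equivalent to a transformation identity for terminating $q$-hypergeometric series, which one could verify directly; this route is less transparent and the required $q$-series identity is itself not entirely trivial, so I would keep it in reserve rather than as the primary argument.
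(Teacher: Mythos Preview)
Your proposal is correct and follows essentially the same approach as the paper: the paper's proof is a one-sentence sketch saying that \eqref{fused_yb} is obtained by writing a $(J_1+J_2)$-vertex version of the Yang--Baxter equation \eqref{yb-eqn} and then taking its $\{J_1,J_2;s,s\}$-specialization to effect the fusion, which is exactly the strategy you have spelled out in detail. Your identification of the main obstacle --- matching the fused $\mathcal{R}$-block to \eqref{fused_R} --- is fair, since the paper does not supply this computation either.
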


\begin{proof}
We will not give a detailed proof of \eqref{fused_yb}. It can be derived straightforwardly by writing a $(J_1+J_2)$-vertex version of the Yang--Baxter equation \eqref{yb-eqn}, then taking its $\{J_1,J_2;s,s\}$ specialization to effect the desired fusion.
\end{proof}

\subsection{Analytic continuation}
\label{sec:analytic}

We see that $q^{J_1}$ and $q^{J_2}$ play the role of spectral parameters in the relation \eqref{fused_yb}, even though the $\mathcal{R}$-matrix \eqref{fused_R} does not depend purely on the ratio of these parameters. This suggests that one should analytically continue $q^{J_1}$ and $q^{J_2}$ to arbitrary complex values, when \eqref{fused_yb} continues to hold (after removing the bounds on $i_1,j_1$ and $i_2,j_2$). This will be a key idea in our construction of the \wh polynomials. 

Let us perform the substitution $q^J \mapsto -x/s$ in \eqref{weight_u=s}, where $x \in \mathbb{C}$. We call the resulting weight $W_x(i,j;k,\ell)$; it is given by
\begin{align}
\label{fused_wt}
W_x
\left(
\begin{gathered}
\begin{tikzpicture}[scale=0.5,baseline=(current bounding box.center)]
\draw[lgray,line width=5pt] (-1,0) -- (1,0);
\draw[lgray,line width=5pt] (0,-1) -- (0,1);
\node[left] at (-0.8,0) {\tiny $j$};\node[right] at (0.8,0) {\tiny $\ell$};
\node[below] at (0,-0.8) {\tiny $i$};\node[above] at (0,0.8) {\tiny $k$};
\end{tikzpicture}
\end{gathered}
\right)
\equiv
W_x(i,j;k,\ell)
=
(\bm{1}_{i+j = k+\ell})
(\bm{1}_{i \geq \ell})\ 
x^{\ell}\
\frac{(-s/x;q)_{\ell}\ (-sx;q)_{i-\ell}\ (q;q)_{k}}
{(q;q)_{\ell}\ (q;q)_{i-\ell}\ (s^2;q)_{k}},
\end{align}
for all $i,j,k,\ell \in \mathbb{Z}_{\geq 0}$. Similarly, under $q^J \mapsto -x/s$ and $q^I \mapsto -y/s$, the $\mathcal{R}$-matrix \eqref{fused_R} becomes
\begin{align*}
\mathcal{R}_{x,y}(i,j;k,\ell)
=
(\bm{1}_{i+j = k+\ell})
(\bm{1}_{i \geq \ell})\
(x/y)^{\ell}\
\frac{(-s/x;q)_{\ell}\ (x/y;q)_{i-\ell}\ (q;q)_{i}}
{(q;q)_{\ell}\ (q;q)_{i-\ell}\ (-s/y;q)_{i}},
\end{align*}
for all $i,j,k,\ell \in \mathbb{Z}_{\geq 0}$.

\begin{cor}{\rm
Let $(i_1,i_2,i_3)$ and $(j_1,j_2,j_3)$ be two triples of non-negative integers. The following identity holds for arbitrary $x,y \in \mathbb{C}$:
\begin{multline}
\label{fused_yb_xy}
\sum_{k_1,k_2,k_3=0}^{\infty}
\mathcal{R}_{x,y}(i_2,i_1;k_2,k_1)
W_{y}(i_3,k_1;k_3,j_1)
W_{x}(k_3,k_2;j_3,j_2)
=
\\
\sum_{k_1,k_2,k_3=0}^{\infty}
W_{x}(i_3,i_2;k_3,k_2)
W_{y}(k_3,i_1;j_3,k_1)
\mathcal{R}_{x,y}(k_2,k_1;j_2,j_1).
\end{multline}

}
\end{cor}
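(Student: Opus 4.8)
The plan is to deduce \eqref{fused_yb_xy} from the ``integer'' Yang--Baxter equation \eqref{fused_yb} by analytic continuation, using the rigidity of rational functions. Fix the external indices $(i_1,i_2,i_3)$ and $(j_1,j_2,j_3)$ once and for all. The first point to establish is that both sides of \eqref{fused_yb_xy} are \emph{finite} sums: the indicator $\bm{1}_{i+j=k+\ell}$ inside $\mathcal{R}_{x,y}(i_2,i_1;k_2,k_1)$ forces $k_1+k_2=i_1+i_2$, leaving finitely many pairs $(k_1,k_2)$, after which the two conservation indicators in the $W$-factors pin $k_3$ down to a single value; the right-hand side is treated identically. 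Moreover every summand is a rational function of $x$ and $y$ with coefficients in $\mathbb{Q}(q,s)$, since $x^{\ell}(-s/x;q)_{\ell}=\prod_{r=1}^{\ell}(x+sq^{r-1})$ is a polynomial in $x$ and a similar clearing of negative powers shows that $\mathcal{R}_{x,y}(i,j;k,\ell)$ is a ratio of polynomials in $x,y$. Write $L(x,y)$ and $R(x,y)$ for the rational functions equal to the left- and right-hand sides of \eqref{fused_yb_xy}.

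Next I would match \eqref{fused_yb_xy} against \eqref{fused_yb} at integer points. By the construction in Section~\ref{sec:analytic}, the substitution $q^{J_2}\mapsto -x/s$, $q^{J_1}\mapsto -y/s$ sends $w^{(J_2)}_{s}$, $w^{(J_1)}_{s}$ and $\mathcal{R}^{(J_2,J_1)}$ to $W_{x}$, $W_{y}$ (see \eqref{fused_wt}) and $\mathcal{R}_{x,y}$ respectively. Given our fixed external indices, the internal summation variables $k_1,k_2,k_3$ of \eqref{fused_yb_xy} are, as noted, bounded in terms of the external ones; hence for every pair of positive integers $J_1,J_2$ that exceeds all indices (external and internal) occurring in \eqref{fused_yb_xy}, the identity \eqref{fused_yb} applies verbatim and, after the substitution, becomes precisely the scalar equality $L(-sq^{J_2},-sq^{J_1})=R(-sq^{J_2},-sq^{J_1})$. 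Here one uses that when $\ell>J$ the factor $(q^{-J};q)_{\ell}=(-s/x;q)_{\ell}\big|_{x=-sq^{J}}$ vanishes, so that the physical weights (defined only for $0\le j,\ell\le J$) agree with the everywhere-defined weights $W_{x}$, $\mathcal{R}_{x,y}$, and no term is lost or spuriously created when passing from the restricted sums of \eqref{fused_yb} to the unrestricted sums of \eqref{fused_yb_xy}. This $q$-Pochhammer bookkeeping is the one genuinely fiddly step; the rest of the argument is routine.

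Finally I would apply rational-function rigidity twice. Fix a large positive integer $J_1$, so $y=-sq^{J_1}$ is a fixed number; then $x\mapsto L(x,-sq^{J_1})$ and $x\mapsto R(x,-sq^{J_1})$ are rational in the single variable $x$ and agree at $x=-sq^{J_2}$ for all sufficiently large $J_2$. Since $|q|<1$ these points are pairwise distinct, so after discarding the finitely many $J_2$ for which $-sq^{J_2}$ is a pole of $L$ or $R$, the difference $L-R$ has infinitely many zeros, whence $L(x,-sq^{J_1})=R(x,-sq^{J_1})$ identically in $x$. As this holds for every large $J_1$, fixing a generic $x$ and repeating the argument in the variable $y$ gives $L(x,y)=R(x,y)$ as rational functions, which is exactly \eqref{fused_yb_xy} for all $x,y\in\mathbb{C}$ where both sides are defined. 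I expect the main obstacle to be precisely the index bookkeeping in the middle paragraph, namely verifying that for $J_1,J_2$ large the unrestricted sums of \eqref{fused_yb_xy} coincide term-by-term with the restricted sums of \eqref{fused_yb}; once that is in hand the analytic continuation is immediate.
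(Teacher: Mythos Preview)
Your proposal is correct and follows essentially the same approach as the paper: both argue that the two sides are rational functions of the parameters that agree at infinitely many specializations $q^{J_1},q^{J_2}$, and hence coincide identically. The paper's proof is a terse two-sentence version of what you wrote; your additional care about finiteness of the sums and the index bookkeeping matching the restricted sums of \eqref{fused_yb} to the unrestricted sums of \eqref{fused_yb_xy} fills in details the paper leaves implicit.
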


\begin{proof}
After fixing finite triples of non-negative integers $(i_1,i_2,i_3)$ and $(j_1,j_2,j_3)$, equation \eqref{fused_yb} holds for infinitely many values of $J_1$ with fixed $J_2$, and infinitely many values of $J_2$ with fixed $J_1$. Furthermore, both sides of \eqref{fused_yb} are rational functions in $q^{J_1}$ and $q^{J_2}$; the functions must then be equal for all $q^{J_1} \in \mathbb{C}$ and $q^{J_2} \in \mathbb{C}$.
\end{proof}

\begin{remark}{\rm
The $s=0$ case of the Boltzmann weights \eqref{fused_wt}, and of the Yang--Baxter equation \eqref{fused_yb_xy}, first appeared in Section 3 of \cite{Korff}.
}
\end{remark}

\subsection{Dual vertex weights}

Up to this point we have examined the fusion procedure as applied to $J$-vertices in the vertex model \eqref{vertices}. Clearly, we could also adapt this approach to $J$-vertices in the dual model \eqref{red_vertices}; in fact the steps of Sections \ref{sec:geom}--\ref{sec:analytic} can be repeated virtually without modification, because of equation \eqref{wt_transform}, which relates the vertices \eqref{vertices} and \eqref{red_vertices} up to a conjugation applied to their vertical edges. We obtain, for generic spectral parameter $u$, a dual set of weights
\begin{align*}
w_{u}^{*(J)}
\left(
\begin{gathered}
\begin{tikzpicture}[scale=0.5,baseline=(current bounding box.center)]
\draw[lred,line width=5pt] (-1,0) -- (1,0);
\draw[lred,line width=5pt] (0,-1) -- (0,1);
\node[left] at (-0.8,0) {\tiny\re $j$};\node[right] at (0.8,0) {\tiny\re $\ell$};
\node[below] at (0,-0.8) {\tiny $i$};\node[above] at (0,0.8) {\tiny $k$};
\end{tikzpicture}
\end{gathered}
\right)
\equiv
w_{u}^{*(J)}(i,j;k,\ell)
\end{align*}
which are determined by the relation
\begin{align*}
w_u^{(J)}(i,j;k,\ell)
=
\frac{(q;q)_k}{(s^2;q)_k}
w_{u}^{*(J)}(k,j;i,\ell)
\frac{(s^2;q)_i}{(q;q)_i},
\quad
i,k \in \mathbb{Z}_{\geq 0},
\quad
0 \leq j,\ell \leq J,
\end{align*}
where $w_u^{(J)}(i,j;k,\ell)$ is given by \eqref{fused_weight}. It then follows from \eqref{weight_u=s} that, at $u=s$, one has
\begin{align}
\label{weight*_u=s}
w_{s}^{*(J)}(i,j;k,\ell)
=
(\bm{1}_{j+k = i+\ell})
(\bm{1}_{k \geq \ell})\
(-sq^J)^{\ell}\ 
\frac{(q^{-J};q)_{\ell}\ (s^2 q^J;q)_{k-\ell}\ (q;q)_{k}}
{(q;q)_{\ell}\ (q;q)_{k-\ell}\ (s^2;q)_{k}}.
\end{align}
Finally, after analytic continuation in $q^J$ and substituting $q^J \mapsto -x/s$, we arrive  at the vertex model  
\begin{align}
\label{fused_dual}
\bW_{x}
\left(
\begin{gathered}
\begin{tikzpicture}[scale=0.5,baseline=(current bounding box.center)]
\draw[lred,line width=5pt] (-1,0) -- (1,0);
\draw[lred,line width=5pt] (0,-1) -- (0,1);
\node[left] at (-0.8,0) {\tiny\re $j$};\node[right] at (0.8,0) {\tiny\re $\ell$};
\node[below] at (0,-0.8) {\tiny $i$};\node[above] at (0,0.8) {\tiny $k$};
\end{tikzpicture}
\end{gathered}
\right)
\equiv
\bW_{x}(i,j;k,\ell)
=
(\bm{1}_{j+k = i+\ell})
(\bm{1}_{k \geq \ell})\
x^{\ell}\ 
\frac{(-s/x;q)_{\ell}\ (-sx;q)_{k-\ell}\ (q;q)_{k}}
{(q;q)_{\ell}\ (q;q)_{k-\ell}\ (s^2;q)_{k}},
\end{align}
related to \eqref{fused_wt} via the transformation
\begin{align}
\label{dual_wt_transform}
W_{x}(i,j;k,\ell)
=
\frac{(q;q)_k}{(s^2;q)_k}
\bW_{x}(k,j;i,\ell)
\frac{(s^2;q)_i}{(q;q)_i},
\quad
i,j,k,\ell \in \mathbb{Z}_{\geq 0}.
\end{align}

\section{\Wh polynomials}
\label{sec:wh_poly}

\subsection{Geometric specialization of $\F_{\lambda}(u_1,\dots,u_{\mathcal{J}})$}
\label{sec:geom_spec}

Fix a set of positive integers $\{J_0,\dots,J_n\}$ and let $\mathcal{J} = \sum_{i=0}^{n} J_i$ denote their sum. Let $\F_{\lambda}(u_1,\dots,u_{\mathcal{J}})$ be a \hl function as defined in Section \ref{sec:hl-F}. 

Following Section \ref{sec:geom}, we take a $\{J_0,\dots,J_n;z_0,\dots,z_n\}$-specialization of $\F_{\lambda}(u_1,\dots,u_{\mathcal{J}})$, as dictated by \eqref{geom_spec}. When applying this specialization to $\F_{\lambda}(u_1,\dots,u_{\mathcal{J}})$, we group its horizontal spectral parameters into $n+1$ successive geometric progressions with ratio $q$, meaning that we are in the position to fuse the horizontal lines $(\mathcal{J}_{i-1}+1,\dots, \mathcal{J}_{i-1}+J_i)$ for all $0 \leq i \leq n$. The fusion is particularly simple: we note that the left and right boundary conditions of the lattice correspond with trivial summations over $a^{(i)}_1,\dots,a^{(i)}_{J_i}$ such that $\sum_{k=1}^{J_i} a^{(i)}_k = J_i$ and $b^{(i)}_1,\dots,b^{(i)}_{J_i}$ such that $\sum_{k=1}^{J_i} b^{(i)}_k = 0$, for all $0 \leq i \leq n$, where $a$'s and $b$'s are as in Theorem \ref{thm:horizontal_join}. We can therefore apply Theorem \ref{thm:horizontal_join} to obtain
\begin{align}
\label{fused_lattice1}
\rho_{\{J_0,\dots,J_n;z_0,\dots,z_n\}}
\Big(
\F_{\lambda}
\Big)
=
\begin{tikzpicture}[scale=0.75,baseline=(current bounding box.center),>=stealth]
\foreach\x in {0,...,4}{
\draw[lgray,line width=5pt] (\x,0) -- (\x,5);
}
\node[above] at (0,5) {\tiny $m_0$};
\node[above] at (1,5) {\tiny $m_1$};
\node[above] at (2,5) {$\cdots$};
\node[below] at (0,0) {\tiny $0$};
\node[below] at (1,0) {\tiny $0$};
\node[below] at (2,0) {$\cdots$};
\foreach\y in {1,...,4}{
\draw[lgray,line width=5pt] (-1,\y) -- (5,\y);
}
\node[left] at (-1.5,1) {$w^{(J_0)}_{z_0}\Big($}; \node[left] at (-0.9,1) {\tiny $J_0$};
\node[left] at (-1.5,2) {$w^{(J_1)}_{z_1}\Big($}; \node[left] at (-0.9,2) {\tiny $J_1$};
\node[left] at (-1.5,3.1) {$\vdots$};
\node[left] at (-1.5,4) {$w^{(J_n)}_{z_n}\Big($}; \node[left] at (-0.9,4) {\tiny $J_n$};
\node[left] at (5.5,1) {\tiny $0$}; \node[left] at (6,1) {$\Big)$};
\node[left] at (5.5,2) {\tiny $0$}; \node[left] at (6,2) {$\Big)$};
\node[left] at (5.5,3.1) {$\vdots$};
\node[left] at (5.5,4) {\tiny $0$}; \node[left] at (6,4) {$\Big)$};
\draw[->] (-1,0) -- (-0.5,0.5);
\end{tikzpicture}
\end{align}
where every vertex within the lattice is of the form \eqref{fused_weight}, and where we have indicated the value of the spin $J_i$ and the spectral parameter $z_i$ of each fused row by writing $w_{z_i}^{(J_i)}(\cdot)$ around it. Note that the partition state $\lambda$ along the top of the lattice is chosen such that $\sum_{i \geq 0} m_i(\lambda) = \mathcal{J}$, otherwise this partition function vanishes trivially.

\subsection{The limit $J_0 \rightarrow \infty$}
\label{sec:j0_infinite}

In the next step of the calculation we set $z_0=s$, which converts all vertices in the $0^{\rm th}$ row of the lattice to the simplified form \eqref{weight_u=s}, and take $J_0$ to infinity. In order to obtain a non-zero result, we also shift the value of $m_0$ at the top of the $0^{\rm th}$ column, by assuming that $m_0 \geq J_0$. 

Consider the vertex at the intersection of the $0^{\rm th}$ row and column, as indicated by the arrow in \eqref{fused_lattice1}. In taking $J_0 \rightarrow \infty$, its incoming data becomes $i = 0$, $j = \infty$, which by the form of \eqref{weight_u=s} constrains the outgoing data to $k = \infty$, $\ell=0$. The resulting Boltzmann weight is $(q;q)_{\infty} / (s^2;q)_{\infty}$. The remainder of the $0^{\rm th}$ row then only gives rise to empty vertices, which have weight $1$. The result of the calculation is thus
\begin{align}
\label{fused_lattice2}
\rho_{\{\infty,J_1,\dots,J_n;s,z_1,\dots,z_n\}}
\Big(
\F_{\{\infty,m_1,m_2,\dots\}}
\Big)
=
\frac{(q;q)_{\infty}}{(s^2;q)_{\infty}}
\times
\begin{tikzpicture}[scale=0.75,baseline=(current bounding box.center)]
\foreach\x in {0,...,4}{
\draw[lgray,line width=5pt] (\x,1) -- (\x,5);
}
\node[above] at (0,5) {\tiny $\infty$};
\node[above] at (1,5) {\tiny $m_1$};
\node[above] at (2,5) {$\cdots$};
\node[below] at (0,1) {\tiny $\infty$};
\node[below] at (1,1) {\tiny $0$};
\node[below] at (2,1) {$\cdots$};
\foreach\y in {2,...,4}{
\draw[lgray,line width=5pt] (-1,\y) -- (5,\y);
}
\node[left] at (-1.5,2) {$w^{(J_1)}_{z_1}\Big($}; \node[left] at (-0.9,2) {\tiny $J_1$};
\node[left] at (-1.5,3.1) {$\vdots$};
\node[left] at (-1.5,4) {$w^{(J_n)}_{z_n}\Big($}; \node[left] at (-0.9,4) {\tiny $J_n$};
\node[left] at (5.5,2) {\tiny $0$}; \node[left] at (6,2) {$\Big)$};
\node[left] at (5.5,3.1) {$\vdots$};
\node[left] at (5.5,4) {\tiny $0$}; \node[left] at (6,4) {$\Big)$};
\end{tikzpicture}
\end{align}
where we have removed the $0^{\rm th}$ row of the lattice entirely. 

\subsection{Stable \hl functions}

Before proceeding further in our study of equation \eqref{fused_lattice2}, we mention a particular case of it that will later be important: namely, the case $J_1=\cdots=J_n = 1$. In this case the $n$ rows of the lattice are not fused at all, and we return to the model of Section \ref{sec:vertex_model}, but now with infinitely many lattice paths entering and leaving the $0^{\rm th}$ column of the lattice. The vertices in the $0^{\rm th}$ column then have Boltzmann weights of the form
\begin{align}
\label{inf_boltz}
w_u
\left(
\begin{gathered}
\begin{tikzpicture}[scale=0.4,baseline=(current bounding box.center)]
\draw[lgray,thick] (-1,0) -- (1,0);
\draw[lgray,line width=5pt] (0,-1) -- (0,1);
\node[left] at (-0.8,0) {\tiny $j$};\node[right] at (0.8,0) {\tiny $\ell$};
\node[below] at (0,-0.8) {\tiny $\infty$};\node[above] at (0,0.8) {\tiny $\infty$};
\end{tikzpicture}
\end{gathered}
\right)
=
\frac{u^{\ell}}{1-su},
\quad
0 \leq j,\ell \leq 1,
\end{align}
and we remark that they are independent of the value of $j$, the left edge state. We use this partition function to define {\it stable} \hl functions:
\begin{align}
\label{def_stable}
\wt{\F}_{\lambda}(u_1,\dots,u_n)
:=
\bbra{\varnothing}
\wt{C}(u_1) \dots \wt{C}(u_n)
\kett{\lambda},
\quad\quad
\wt{C}(u)
:=
(1-su)
\bra{\infty}_0 C(u) \ket{\infty}_0,
\end{align}
where $\lambda = 1^{m_1} 2^{m_2} \dots \in {\rm Part}^{+}_n$ is a positive partition, and where we have employed the notation \eqref{pos_partition}, \ie\ $\bbra{\varnothing} = \bra{0}_1 \otimes \bra{0}_2 \otimes \cdots$ and $\kett{\lambda} = \ket{m_1}_1 \otimes \ket{m_2}_2 \otimes \cdots$. The multiplicative factor $(1-su)$ appearing in the definition of $\wt{C}(u)$ is to cancel the same factor appearing in the denominator of \eqref{inf_boltz}. The operators $\wt{C}(u)$ live in ${\rm End}(\wt{\mathbb{V}})$ and are given explicitly by
\begin{multline*}
\wt{C}_L(u) : 
\ket{k_1}_1 \otimes \cdots \otimes \ket{k_L}_L
\mapsto
\\
\sum_{0 \leq j \leq 1}\
\sum_{i_1,\dots,i_L \geq 0}
u^j\
w_u\Big(\{i_1,\dots,i_L\}, j ; \{k_1,\dots,k_L\}, 0 \Big)
\ket{i_1}_1 \otimes \cdots \otimes \ket{i_L}_L,
\end{multline*}
with $\wt{C}(u) = \lim_{L \rightarrow \infty} \wt{C}_L(u)$.

For any positive partition $\lambda \in {\rm Part}^{+}_{n-1}$, the functions \eqref{def_stable} satisfy the stability equation
\begin{align}
\label{stability}
\wt{\F}_{\lambda}(u_1,\dots,u_{n-1},0)
=
\wt{\F}_{\lambda}(u_1,\dots,u_{n-1}),
\end{align}
which can be easily verified using the lattice interpretation of $\wt{\F}_{\lambda}$ (namely, \eqref{fused_lattice2} with all $J_i=1$). Indeed, analysing the top row of the partition function \eqref{fused_lattice2}, we find that the only configuration which does not have a common factor of $u_n$ (and accordingly, does not vanish when $u_n=0$) is the following one:
\begin{align*}
\begin{tikzpicture}[scale=0.6,baseline=(current bounding box.center)]
\draw[lgray,thick] (-1,0) -- (7,0);
\node[left] at (-0.8,0) {\tiny $1$};\node[right] at (6.8,0) {\tiny $0$};
\foreach\x in {0,...,6}{
\draw[lgray,line width=5pt] (\x,-1) -- (\x,1);
}
\draw[thick,->] (-1,0) -- (0,0) -- (0,1);
\node[below] at (0,-0.9) {\tiny $\infty$};\node[above] at (0,0.9) {\tiny $\infty$};
\node[below] at (1,-0.9) {\tiny $m_1$};\node[above] at (1,0.9) {\tiny $m_1$};
\node[below] at (2,-0.9) {\tiny $m_2$};\node[above] at (2,0.9) {\tiny $m_2$};
\node[below] at (4.3,-0.9) {$\cdots$};\node[above] at (4.3,0.9) {$\cdots$};
\node at (0.5,0) {\tiny $0$}; \node at (1.5,0) {\tiny $0$}; \node at (2.5,0) {\tiny $0$};
\end{tikzpicture}
\end{align*}
Using the Boltzmann weights \eqref{vertices} with $u=0$, we see that the weight of this configuration is $1$. The property \eqref{stability} is now immediate.

The functions \eqref{def_stable} appeared in essentially the same form in \cite{GarbaliGW}. There an explicit formula for $\wt{\F}_{\lambda}$, as a sum over the symmetric group, was obtained:
\begin{align}
\label{symmetriz}
\wt{\F}_{\lambda}(u_1,\dots,u_n)
=
\frac{(1-q)^n}{(q;q)_{n-\ell(\lambda)}}
\times
\sum_{\sigma \in S_n}
\sigma\left\{
\prod_{1 \leq i<j \leq n}
\left(
\frac{u_i-qu_j}{u_i-u_j}
\right)
\prod_{i=1}^{\ell(\lambda)}
\left(
\frac{u_i}{u_i-s}
\right)
\prod_{i=1}^{n}
\left(
\frac{u_i-s}{1-s u_i}
\right)^{\lambda_i}
\right\}.
\end{align}
This equation is parallel to similar formulae for $\F_{\lambda}$ and $\G_{\lambda}$ obtained in \cite[Section 5]{Borodin} and \cite[Section 4]{BorodinP2}. In fact, one can easily derive \eqref{symmetriz} by starting from the symmetrization formula for $\G_{\lambda}$ in \cite{Borodin} and taking the limit in which the number of lattice paths on the $0^{\rm th}$ column becomes infinite. Doing so leads to the partition function as shown in \eqref{fused_lattice2}, but in which the left edge states are $J_1 = \cdots = J_n = 0$ instead of $J_1 = \cdots = J_n =1$. This discrepancy between the left edge states is actually irrelevant, in view of the fact that the vertices \eqref{inf_boltz} do not depend on the value of $j$.

One can also define a dual version of the stable \hl functions:
\begin{align*}
\widetilde{\F}^{*}_{\lambda}(v_1,\dots,v_n)
:=
\bbra{\lambda} \widetilde{B}^{*}(v_n) \dots \widetilde{B}^{*}(v_1) \kett{\varnothing},
\quad
\widetilde{B}^{*}(v)
:=
(1-sv)
\bra{\infty}_0 D^{*}(v) \ket{\infty}_0,
\end{align*}
for all $\lambda \in {\rm Part}^{+}_n$. This time, the operators $\widetilde{B}^{*}(v) \in {\rm End}(\widetilde{\mathbb{V}})$ have the explicit action
\begin{multline*}
\wt{B}^{*}_L(v) : 
\ket{k_1}_1 \otimes \cdots \otimes \ket{k_L}_L
\mapsto
\\
\sum_{0 \leq j \leq 1}\
\sum_{i_1,\dots,i_L \geq 0}
v^j\
\bw_v\Big(\{i_1,\dots,i_L\}, j ; \{k_1,\dots,k_L\}, 0 \Big)
\ket{i_1}_1 \otimes \cdots \otimes \ket{i_L}_L,
\end{multline*}
with $\wt{B}^{*}(v) = \lim_{L \rightarrow \infty} \wt{B}^{*}_L(v)$. By virtue of the relation \eqref{wt_transform}, it is then evident that
\begin{align}
\label{tildeFc}
\widetilde{\F}^{*}_{\lambda}(v_1,\dots,v_n)
=
\tilde{\c}_{\lambda}(q;s)
\widetilde{\F}_{\lambda}(v_1,\dots,v_n),
\quad
\tilde{\c}_{\lambda}(q,s)
:=
\prod_{i \geq 1} \frac{(s^2;q)_{m_i(\lambda)}}{(q;q)_{m_i(\lambda)}}.
\end{align}

\begin{remark}{\rm
It is easily verified from \eqref{symmetriz} and \eqref{tildeFc} that $\wt{\F}_{\lambda}$ degenerates to the dual Hall--Littlewood polynomial $Q_{\lambda}$ at $s=0$, while $\wt{\F}^{*}_{\lambda}$ becomes equal to the $P_{\lambda}$ Hall--Littlewood polynomial.
}
\end{remark}

\subsection{Lattice construction of $\mathbb{F}_{\lambda}(x_1,\dots,x_n)$}
\label{sec:fused_F}

Returning to \eqref{fused_lattice2}, we set $z_1 = \cdots = z_n = s$, which converts all vertex weights to the form \eqref{weight_u=s}. The weights in the $0^{\rm th}$ column have a particularly simple expression. Indeed, taking the $i \rightarrow \infty$ limit of \eqref{weight_u=s}, we find that
\begin{align}
\label{0th-col}
w^{(J)}_{s}(\infty,j;k,\ell)
=
\bm{1}_{k=\infty}
(-s q^J)^{\ell}
\frac{(q^{-J};q)_{\ell} (s^2 q^J;q)_{\infty}}{(q;q)_{\ell}(s^2;q)_{\infty}},
\end{align}
since the value of $\ell$ must remain finite (otherwise the factor $(-sq^J)^{\ell}$ vanishes). Because of the indicator function $\bm{1}_{k=\infty}$, if infinitely many paths enter a vertex vertically from below, infinitely many paths will also exit vertically from above; this ensures that every vertex in the $0^{\rm th}$ column has a Boltzmann weight of the form \eqref{0th-col}. It is then clear that the $0^{\rm th}$ column of the lattice always produces the factor $\prod_{i=1}^{n} (s^2 q^{J_i};q)_{\infty}/(s^2;q)_{\infty}$ (irrespective of the path configuration along the right edges of the vertices in this column).

The weights \eqref{0th-col} are independent of the value of $j$, meaning that the partition function \eqref{fused_lattice2} does not depend on its left-edge states $\{J_1,\dots,J_n\}$. We may therefore reassign the left-edge states to any values; the most natural choice is $\{0,\dots,0\}$. Furthermore, we can effectively remove the restriction that internal horizontal edge states in the $i^{\rm th}$ row be bounded by $J_i$, since the Boltzmann weights \eqref{weight_u=s} vanish whenever $\ell > J_i$. 

In light of these observations, we see that $\rho_{\{\infty,J_1,\dots,J_n;s,s,\dots,s\}}(\F_{\{\infty,m_1,m_2,\dots\}})$ depends on $J_1,\dots,J_n$ only via $q^{J_1},\dots,q^{J_n}$. After dividing by the common factor $\prod_{i=1}^{n} (s^2 q^{J_i};q)_{\infty}/(s^2;q)_{\infty}$, it is easily shown to be polynomial in each $q^{J_i}$, where the boundedness of degree follows from the fact that $m_i = 0$ for all $i > N$, for sufficiently large $N$. Since we know this polynomial at infinitely many values $q^{J_i} \in \{q,q^2,\dots,\}$, we can analytically continue in $q^{J_i}$, writing $q^{J_i} = -x_i/s$, for all $1 \leq i \leq n$. This takes us to the partition function
\begin{align}
\label{fused_lattice}
\prod_{i=1}^{n} \frac{(s^2;q)_{\infty}}{(-s x_i;q)_{\infty}}
\times
\begin{tikzpicture}[scale=0.7,baseline=(current bounding box.center)]
\foreach\x in {0,...,4}{
\draw[lgray,line width=5pt] (\x,1) -- (\x,5);
}
\node[above] at (0,5) {\tiny $\infty$};
\node[above] at (1,5) {\tiny $m_1$};
\node[above] at (2,5) {\tiny $m_2$};
\node[above] at (3,5) {$\cdots$};
\node[below] at (0,1) {\tiny $\infty$};
\node[below] at (1,1) {\tiny $0$};
\node[below] at (2,1) {\tiny $0$};
\node[below] at (3,1) {$\cdots$};
\foreach\y in {2,...,4}{
\draw[lgray,line width=5pt] (-1,\y) -- (5,\y);
}
\node[left] at (-1.5,2) {$x_1$}; \node[left] at (-0.9,2) {\tiny $0$};
\node[left] at (-1.5,3.1) {$\vdots$};
\node[left] at (-1.5,4) {$x_n$}; \node[left] at (-0.9,4) {\tiny $0$};
\node[left] at (5.5,2) {\tiny $0$}; 
\node[left] at (5.5,3.1) {$\vdots$};
\node[left] at (5.5,4) {\tiny $0$};
\end{tikzpicture}
\end{align}
where each vertex is now of the form \eqref{fused_wt}, and we have dropped the factor $(q;q)_{\infty}/(s^2;q)_{\infty}$ from \eqref{fused_lattice2}, which turns out to be an unnecessary artefact of the calculation.

\begin{defn}{\rm
Fix a positive partition $\lambda$ and let $\lambda' = 1^{m_1(\lambda')} 2^{m_2(\lambda')} \dots$ be its conjugate. The \wh polynomial $\mathbb{F}_{\lambda}(x_1,\dots,x_n)$ is defined to be equal to the partition function shown in \eqref{fused_lattice} with vertex weights \eqref{fused_wt}, where $m_i \equiv m_i(\lambda')$ for all $i \geq 1$. 
}
\end{defn}

\begin{remark}{\rm
A couple of comments are in order regarding the partition function \eqref{fused_lattice}. First, it is a meaningful way to define $\mathbb{F}_{\lambda}$, even though it contains infinitely many columns. This can be argued, once again, using the fact that only the columns numbered $0$ to $\lambda'_1$ will contribute non-trivially to the partition function; the remaining columns will only feature vertices of the form $\fvert{0}{0}{0}{0}{0.3}$, with weight $1$.

Second, it might not be clear at this stage why we have chosen the $m_i$ in \eqref{fused_lattice} to correspond with the part-multiplicities of the conjugate partition $\lambda'$, rather than $\lambda$ itself. Our justification for doing so is the $\mathbb{F}_{\lambda}$, as defined, reduce to the $q$--Whittaker polynomials at $s=0$; see Section \ref{sec:reduction}.
}
\end{remark}

\begin{defn}{\rm
Fix positive partitions $\lambda,\mu$ such that $\lambda \supset \mu$, and let $\lambda' = 1^{m_1(\lambda')} 2^{m_2(\lambda')} \dots$ and $\mu' = 1^{m_1(\mu')} 2^{m_2(\mu')} \dots$ be their conjugates. The skew \wh polynomial $\mathbb{F}_{\lambda/\mu}(x_1,\dots,x_n)$ is defined as the partition function shown in Figure \ref{fig:skew_wh} (left panel) with vertex weights \eqref{fused_wt}, divided by $\prod_{i=1}^{n} (-sx_i;q)_{\infty}/(s^2;q)_{\infty}$. It reduces to the (ordinary) \wh polynomial for $\mu = \varnothing$.
}
\end{defn}

\begin{figure}
\begin{tabular}{cc}
\begin{tikzpicture}[scale=0.9,baseline=(current bounding box.center)]
\foreach\x in {0,...,4}{
\draw[lgray,line width=6pt] (\x,1) -- (\x,5);
}
\node[above] at (0,5) {\tiny $\infty$};
\node[above] at (0.9,5) {\tiny $m_1(\lambda')$};
\node[above] at (2,5) {\tiny $m_2(\lambda')$};
\node[above] at (3,5) {$\cdots$};
\node[below] at (0,1) {\tiny $\infty$};
\node[below] at (0.9,1) {\tiny $m_1(\mu')$};
\node[below] at (2,1) {\tiny $m_2(\mu')$};
\node[below] at (3,1) {$\cdots$};
\foreach\y in {2,...,4}{
\draw[lgray,line width=6pt] (-1,\y) -- (5,\y);
}
\node[left] at (-1.5,2) {$x_1$}; \node[left] at (-0.9,2) {\tiny $0$};
\node[left] at (-1.5,3.1) {$\vdots$};
\node[left] at (-1.5,4) {$x_n$}; \node[left] at (-0.9,4) {\tiny $0$};
\node[left] at (5.5,2) {\tiny $0$}; 
\node[left] at (5.5,3.1) {$\vdots$};
\node[left] at (5.5,4) {\tiny $0$};
\end{tikzpicture}
\quad
&
\quad
\begin{tikzpicture}[scale=0.9,baseline=(current bounding box.center)]
\foreach\x in {0,...,4}{
\draw[lred,line width=6pt] (\x,1) -- (\x,5);
}
\node[above] at (0,5) {\tiny $\infty$};
\node[above] at (0.9,5) {\tiny $m_1(\mu')$};
\node[above] at (2,5) {\tiny $m_2(\mu')$};
\node[above] at (3,5) {$\cdots$};
\node[below] at (0,1) {\tiny $\infty$};
\node[below] at (0.9,1) {\tiny $m_1(\lambda')$};
\node[below] at (2,1) {\tiny $m_2(\lambda')$};
\node[below] at (3,1) {$\cdots$};
\foreach\y in {2,...,4}{
\draw[lred,line width=6pt] (-1,\y) -- (5,\y);
}
\node[left] at (-1.5,2) {$x_n$}; \node[left] at (-0.9,2) {\tiny\re $0$};
\node[left] at (-1.5,3.1) {$\vdots$};
\node[left] at (-1.5,4) {$x_1$}; \node[left] at (-0.9,4) {\tiny\re $0$};
\node[left] at (5.5,2) {\tiny\re $0$}; 
\node[left] at (5.5,3.1) {$\vdots$};
\node[left] at (5.5,4) {\tiny\re $0$};
\end{tikzpicture}
\end{tabular}
\caption{Left panel: lattice construction of $\mathbb{F}_{\lambda/\mu}(x_1,\dots,x_n)$, employing the vertex weights \eqref{fused_wt}. Right panel: lattice construction of $\mathbb{F}^{*}_{\lambda/\mu}(x_1,\dots,x_n)$, using the vertex weights \eqref{fused_dual}.}
\label{fig:skew_wh}
\end{figure}
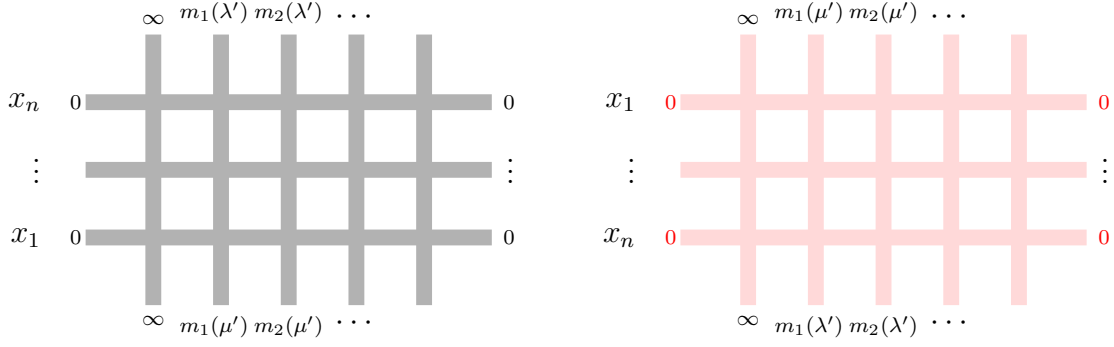

\subsection{Lattice construction of $\mathbb{G}_{\lambda}(x_1,\dots,x_n)$}
\label{sec:G_fusion}

In Sections \ref{sec:geom_spec}, \ref{sec:j0_infinite} and \ref{sec:fused_F} we gave a systematic approach that allows one to start from a \hl function $\F_{\lambda}$ and transform it into a \wh polynomial (up to adjustments of the normalization). One can now repeat this procedure, applying it to the dual \hl function $\G_{\lambda}$. 

This time we begin by fixing a set of positive integers $\{K_1,\dots,K_n\}$, with $\mathcal{K} = \sum_{i=1}^{n} K_i$ denoting their sum, and let 
$\G_{\lambda}(v_1,\dots,v_{\mathcal{K}})$ be the dual \hl function as defined in Section \ref{sec:hl-G}. We then take a $\{K_1,\dots,K_n;s,\dots,s\}$-specialization of 
$\G_{\lambda}(v_1,\dots,v_{\mathcal{K}})$, and send the number of lattice paths entering and exiting the $0^{\rm th}$ column to infinity (this can always be achieved, since one can send both $\ell$ and $m_0(\lambda)$ in Definition \ref{def:G} to infinity, while keeping $\ell-m_0(\lambda)$ finite).

After appropriate analytic continuation in $q^{K_1},\dots,q^{K_n}$, the result of the calculation is direct passage to the partition function \eqref{fused_lattice}, which we already obtained by applying the fusion procedure to $\F_{\lambda}$. We conclude that no new function is obtained in this case; accordingly, we make the identification 
$\mathbb{G}_{\lambda/\mu}(x_1,\dots,x_n) \equiv \mathbb{F}_{\lambda/\mu}(x_1,\dots,x_n)$ for all partitions $\lambda,\mu$, and will not mention $\mathbb{G}_{\lambda/\mu}(x_1,\dots,x_n)$ again in the sequel.

\subsection{The polynomial $\mathbb{F}^{*}_{\lambda/\mu}(x_1,\dots,x_n)$}

Motivated by equation \eqref{tildeFc}, we make the following definition:
\begin{align*}
\mathbb{F}^{*}_{\lambda/\mu}(x_1,\dots,x_n)
:=
\frac{\tilde{\c}_{\lambda'}(q,s)}{\tilde{\c}_{\mu'}(q,s)}
\mathbb{F}_{\lambda/\mu}(x_1,\dots,x_n),
\quad
\tilde{\c}_{\lambda'}(q,s)
=
\prod_{i \geq 1} \frac{(s^2;q)_{m_i(\lambda')}}{(q;q)_{m_i(\lambda')}}
=
\prod_{i \geq 1} \frac{(s^2;q)_{\lambda_i-\lambda_{i+1}}}{(q;q)_{\lambda_i-\lambda_{i+1}}}.
\end{align*}
We refer to these as {\it dual (skew) \wh polynomials.}

\begin{prop}{\rm
Fix positive partitions $\lambda,\mu$ such that $\lambda \supset \mu$, and let $\lambda' = 1^{m_1(\lambda')} 2^{m_2(\lambda')} \dots$ and $\mu' = 1^{m_1(\mu')} 2^{m_2(\mu')} \dots$ be their conjugates. The dual skew \wh polynomial $\mathbb{F}^{*}_{\lambda/\mu}(x_1,\dots,x_n)$ is equal to the partition function shown in Figure \ref{fig:skew_wh} (right panel) with vertex weights \eqref{fused_dual}, divided by $\prod_{i=1}^{n} (-sx_i;q)_{\infty}/(s^2;q)_{\infty}$.
}
\end{prop}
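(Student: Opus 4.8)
The plan is to proceed in close analogy with the proof of Proposition \ref{prop:Gc}. The dual weights \eqref{fused_dual} are related to the weights \eqref{fused_wt} by the vertex-local gauge transformation \eqref{dual_wt_transform}, so I would start from the partition function $Z^{*}$ of the lattice in the right panel of Figure \ref{fig:skew_wh} (before dividing by the Cauchy-type prefactor) and rewrite every Boltzmann weight $\bW_{x}(\,\cdot\,)$ using \eqref{dual_wt_transform}. Each rewriting produces a factor $(s^2;q)_{i}/(q;q)_{i}$ attached to the state $i$ on the lower vertical edge of the vertex and a factor $(q;q)_{k}/(s^2;q)_{k}$ attached to the state $k$ on the upper vertical edge, and replaces $\bW_x$ by a weight $W_x$ in which the roles of the lower and upper vertical states have been interchanged.

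Multiplying these factors down each column, the contributions coming from the internal vertical edges cancel telescopically, leaving only those of the external top and bottom edges. In the right panel the bottom external edge of column $c\ge 1$ carries $m_c(\lambda')$ paths and the top external edge carries $m_c(\mu')$ paths, so column $c$ contributes the scalar $\tfrac{(s^2;q)_{m_c(\lambda')}}{(q;q)_{m_c(\lambda')}}\cdot\tfrac{(q;q)_{m_c(\mu')}}{(s^2;q)_{m_c(\mu')}}$, and taking the product over all $c\ge 1$ yields exactly $\tilde{\c}_{\lambda'}(q,s)/\tilde{\c}_{\mu'}(q,s)$. Meanwhile the interchange of the upper and lower vertical states at each vertex amounts to reflecting the whole lattice about a horizontal axis: this reverses the order of the rows (so the spectral parameters, read upward, become $x_1,\dots,x_n$), swaps the top and bottom boundary data (so the top boundary becomes $m_c(\lambda')$ and the bottom $m_c(\mu')$), and turns every weight into one of the form \eqref{fused_wt}. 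Since the reflection is a bijection on path configurations that matches weights term by term, this is precisely the lattice of the left panel of Figure \ref{fig:skew_wh}, whose partition function is $\prod_{i=1}^{n}\bigl((-sx_i;q)_\infty/(s^2;q)_\infty\bigr)\,\mathbb{F}_{\lambda/\mu}(x_1,\dots,x_n)$. Hence $Z^{*}=\tfrac{\tilde{\c}_{\lambda'}(q,s)}{\tilde{\c}_{\mu'}(q,s)}\prod_{i=1}^{n}\bigl((-sx_i;q)_\infty/(s^2;q)_\infty\bigr)\,\mathbb{F}_{\lambda/\mu}$, and dividing by $\prod_{i=1}^{n}(-sx_i;q)_\infty/(s^2;q)_\infty$ gives $\tfrac{\tilde{\c}_{\lambda'}(q,s)}{\tilde{\c}_{\mu'}(q,s)}\mathbb{F}_{\lambda/\mu}=\mathbb{F}^{*}_{\lambda/\mu}$, as claimed.

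The step requiring the most care — the main obstacle — is the $0^{\rm th}$ column, where both external vertical edges carry infinitely many paths. As around \eqref{0th-col}, one must check that conservation forces every vertex of that column to have $\infty$ paths on both of its vertical edges, and that on such vertices the weights $\bW_x$ and $W_x$ coincide (equivalently, the conjugation factors $(q;q)_k/(s^2;q)_k$ and $(s^2;q)_i/(q;q)_i$ formally cancel at $i=k=\infty$), so that the $0^{\rm th}$ column produces no additional scalar and the telescoping genuinely ranges over the columns $c\ge 1$ only. Once this is verified — together with the routine remark that only columns $0$ through $\lambda'_1$ are nontrivial, so every partition function in sight is well-defined — the proof is complete.
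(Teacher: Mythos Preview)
Your proposal is correct and follows exactly the approach the paper indicates: apply the conjugation \eqref{dual_wt_transform} vertex by vertex, telescope along each column, and recognize the reflected lattice as the left panel of Figure~\ref{fig:skew_wh}, just as in the proof of Proposition~\ref{prop:Gc}. Your explicit treatment of the $0^{\rm th}$ column (checking that all vertical states there are forced to $\infty$ and that $\bW_x(\infty,j;\infty,\ell)=W_x(\infty,j;\infty,\ell)$, so no boundary scalar is produced) is the only point that goes beyond the paper's one-line proof, and it is handled correctly.
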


\begin{proof}
This is proved using the relation \eqref{dual_wt_transform}, and following essentially the same steps as in the proof of Proposition \ref{prop:Gc}.
\end{proof}

\subsection{Fused row operators}

In analogy with Section \ref{sec:row_hl}, we now define row operators which can be used to express the \wh polynomials in a purely algebraic way.\footnote{The fused row operators that we study have previously appeared, in the $s=0$ case, in \cite{Korff}, \cite{Korff2} and \cite{DuvalP}; these works also exposed a direct correspondence with Baxter's $Q$-operator.} By horizontally concatenating the vertices \eqref{fused_wt} or \eqref{fused_dual}, and summing over the states on all internal lattice edges, we obtain fused row vertices:
\begin{align*}
W_x
\left(
\begin{gathered}
\begin{tikzpicture}[scale=0.6,baseline=(current bounding box.center)]
\draw[lgray,line width=5pt] (0,0) -- (7,0);
\foreach\x in {1,...,6} {\draw[lgray,line width=5pt] (\x,-1) -- (\x,1);}
\node[left] at (0,0) {\tiny $j$};\node[right] at (7,0) {\tiny $\ell$};
\node[below] at (1,-1) {\tiny $i_1$};\node[above] at (1,1) {\tiny $k_1$};
\node[below] at (3,-0.8) {$\cdots$};\node[above] at (3,0.8) {$\cdots$};
\node[below] at (6,-1) {\tiny $i_L$};\node[above] at (6,1) {\tiny $k_L$};
\end{tikzpicture}
\end{gathered}
\right)
\equiv
W_x\Big(\{i_1,\dots,i_L\}, j ; \{k_1,\dots,k_L\}, \ell \Big)
\end{align*}
\begin{align*}
\bW_{x}
\left(
\begin{gathered}
\begin{tikzpicture}[scale=0.6,baseline=(current bounding box.center)]
\draw[lred,line width=5pt] (0,0) -- (7,0);
\foreach\x in {1,...,6} {\draw[lred,line width=5pt] (\x,-1) -- (\x,1);}
\node[left] at (0,0) {\tiny\re $j$};\node[right] at (7,0) {\tiny\re $\ell$};
\node[below] at (1,-1) {\tiny $i_1$};\node[above] at (1,1) {\tiny $k_1$};
\node[below] at (3,-0.8) {$\cdots$};\node[above] at (3,0.8) {$\cdots$};
\node[below] at (6,-1) {\tiny $i_L$};\node[above] at (6,1) {\tiny $k_L$};
\end{tikzpicture}
\end{gathered}
\right)
\equiv
\bW_{x}\Big(\{i_1,\dots,i_L\},j ; \{k_1,\dots,k_L\}, \ell \Big)
\end{align*}
where all indices $\{i_1,\dots,i_L\}, j, \{k_1,\dots,k_L\},\ell$ take values in $\mathbb{Z}_{\geq 0}$. From this, introduce two (infinite dimensional) monodromy matrices
\begin{align*}
\mathbb{T}(x) 
=
\left(
\begin{array}{ccc}
\mathbb{T}_x(0;0) & \mathbb{T}_x(0;1) & \cdots
\\
\mathbb{T}_x(1;0) & \mathbb{T}_x(1;1) & \cdots
\\
\vdots & \vdots & \ddots
\end{array}
\right),
\quad
\overline{\mathbb{T}}(x) 
=
\left(
\begin{array}{ccc}
\overline{\mathbb{T}}_{x}(0;0) & \overline{\mathbb{T}}_{x}(0;1) & \cdots
\\
\overline{\mathbb{T}}_{x}(1;0) & \overline{\mathbb{T}}_{x}(1;1) & \cdots
\\
\vdots & \vdots & \ddots
\end{array}
\right),
\end{align*}
whose entries act linearly on $V_1 \otimes \cdots \otimes V_L$ as follows:
\begin{align*}
\mathbb{T}_x(j;\ell) : \ket{k_1}_1 \otimes \cdots \otimes \ket{k_L}_L
\mapsto
\sum_{i_1,\dots,i_L \geq 0}
W_x\Big(\{i_1,\dots,i_L\}, j ; \{k_1,\dots,k_L\}, \ell \Big)
\ket{i_1}_1 \otimes \cdots \otimes \ket{i_L}_L,
\end{align*}
\begin{align*}
\overline{\mathbb{T}}_x(j;\ell) : 
\ket{k_1}_1 \otimes \cdots \otimes \ket{k_L}_L
\mapsto
\sum_{i_1,\dots,i_L \geq 0}
\bW_x\Big(\{i_1,\dots,i_L\}, j ; \{k_1,\dots,k_L\}, \ell \Big)
\ket{i_1}_1 \otimes \cdots \otimes \ket{i_L}_L.
\end{align*}
Similarly to the algebraic construction of the stable \hl functions, it turns out that certain linear combinations of the monodromy matrix entries $\mathbb{T}_x(j;\ell)$ and $\overline{\mathbb{T}}_x(j;\ell)$ are the most useful for constructing the \wh polynomials. We define two such linear combinations:
\begin{align}
\label{wh_ops}
\mathbb{C}(x)
:=
\lim_{L \rightarrow \infty}
\left(
\sum_{i=0}^{\infty}
x^{i} \frac{(-s/x;q)_{i}}{(q;q)_{i}}
\mathbb{T}_x(i;0)
\right),
\quad
\quad
\overline{\mathbb{B}}(x)
:=
\lim_{L \rightarrow \infty}
\left(
\sum_{i=0}^{\infty}
x^{i} \frac{(-s/x;q)_{i}}{(q;q)_{i}}
\overline{\mathbb{T}}_x(i;0)
\right),
\end{align}
passing also to the semi-infinite lattice, so that both $\mathbb{C}(x), \overline{\mathbb{B}}(x) \in {\rm End}(\wt{\mathbb{V}})$. The coefficients in the above sums originate from the right hand side of \eqref{0th-col} with $q^J = -s/x$; see the proof of Proposition \ref{prop:wh_exp} below.

\subsection{Algebraic formulation of \wh polynomials} 

Putting together the definitions of the previous subsections, we are now ready to express the \wh polynomials as expectation values of the operators \eqref{wh_ops}.

\begin{prop}
\label{prop:wh_exp}
{\rm
Let $n$ be a positive integer and fix two positive partitions $\lambda,\mu$ such that $\lambda \supset \mu$. Letting $\lambda',\mu'$ be the corresponding conjugate partitions, we have
\begin{align}
\label{skew-s-wh}
\mathbb{F}_{\lambda/\mu}(x_1,\dots,x_n)
&=
\bbra{\mu'} \mathbb{C}(x_1) \dots \mathbb{C}(x_n) \kett{\lambda'},
\\
\label{dual-skew-s-wh}
\mathbb{F}^{*}_{\lambda/\mu}(y_1,\dots,y_n)
&= 
\bbra{\lambda'} \ast{\mathbb{B}}(y_n) \dots \ast{\mathbb{B}}(y_1) \kett{\mu'},
\end{align}
where $\kett{\lambda'} = \otimes_{i \geq 1} \ket{m_i(\lambda')}_i$, and similarly for the remaining state vectors.
}
\end{prop}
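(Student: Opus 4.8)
The plan is to prove \eqref{skew-s-wh} and \eqref{dual-skew-s-wh} in parallel, starting from the lattice-model definitions of $\mathbb{F}_{\lambda/\mu}$ and $\mathbb{F}^{*}_{\lambda/\mu}$ in Figure \ref{fig:skew_wh} and carrying out explicitly the summation over the path configurations in the $0^{\rm th}$ column of each lattice. I will spell the argument out for \eqref{skew-s-wh}; the case of \eqref{dual-skew-s-wh} is entirely analogous, with the dual weights \eqref{fused_dual}, the dual monodromy entries $\overline{\mathbb{T}}_x$, and the operator $\ast{\mathbb{B}}(x)$ of \eqref{wh_ops} in place of $\mathbb{T}_x$ and $\mathbb{C}(x)$.

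First I would analyse the $0^{\rm th}$ column of the left panel of Figure \ref{fig:skew_wh}. Both its external top and bottom edges carry $\infty$ paths, so every vertex of this column receives $i = \infty$ paths from below. Sending $i \to \infty$ in \eqref{fused_wt}, the factor $\bm{1}_{i \geq \ell}$ becomes trivial while $\bm{1}_{i+j = k+\ell}$ forces $k = \infty$ (since $j,\ell$ remain finite), so $\infty$ paths propagate upward through the entire column, and the surviving weight of the row-$i$ vertex is
\begin{align*}
\frac{x_i^{\ell_i}\,(-s/x_i;q)_{\ell_i}\,(-sx_i;q)_{\infty}}{(q;q)_{\ell_i}\,(s^2;q)_{\infty}},
\end{align*}
where $\ell_i$ denotes the number of paths leaving the $0^{\rm th}$ column into the bulk along that row; this coincides with \eqref{0th-col} after the substitution $q^{J} \mapsto -x_i/s$. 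Hence the $0^{\rm th}$ column produces the factor $\prod_{i=1}^{n}\frac{(-sx_i;q)_{\infty}}{(s^2;q)_{\infty}}\,x_i^{\ell_i}\frac{(-s/x_i;q)_{\ell_i}}{(q;q)_{\ell_i}}$, and the product $\prod_{i=1}^{n}(-sx_i;q)_{\infty}/(s^2;q)_{\infty}$ cancels exactly against the normalization by which the partition function of Figure \ref{fig:skew_wh} is divided in the definition of $\mathbb{F}_{\lambda/\mu}$.

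Deleting the $0^{\rm th}$ column leaves the lattice on columns $1,2,\dots$ with vertices \eqref{fused_wt}, top boundary $\kett{\lambda'}$, bottom boundary $\kett{\mu'}$, and with the row carrying spectral parameter $x_i$ having external left edge state $\ell_i$ and external right edge state $0$. By the defining action of the monodromy entries $\mathbb{T}_x(j;\ell)$, together with the top-to-bottom reading convention --- under which $x_n$ labels the topmost row and $x_1$ the bottommost, matching Figure \ref{fig:skew_wh} --- this partition function equals $\bbra{\mu'}\,\mathbb{T}_{x_1}(\ell_1;0)\cdots\mathbb{T}_{x_n}(\ell_n;0)\,\kett{\lambda'}$. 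Summing over all $\ell_i \in \mathbb{Z}_{\geq 0}$ and folding in the leftover $0^{\rm th}$-column weights, the $i$-th factor becomes $\sum_{\ell_i \geq 0} x_i^{\ell_i}\frac{(-s/x_i;q)_{\ell_i}}{(q;q)_{\ell_i}}\,\mathbb{T}_{x_i}(\ell_i;0) = \mathbb{C}(x_i)$ by \eqref{wh_ops}, and passing to $L \to \infty$ gives \eqref{skew-s-wh}. For \eqref{dual-skew-s-wh} one repeats the argument on the right panel of Figure \ref{fig:skew_wh}: in the $0^{\rm th}$ column one now sends $k \to \infty$ in \eqref{fused_dual}, which forces $i = \infty$ and yields the same weight, while the dual lattice carries $x_1$ on top and $x_n$ on the bottom, which accounts for the reversed order $\ast{\mathbb{B}}(y_n)\cdots\ast{\mathbb{B}}(y_1)$.

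The step I expect to require the most care is the treatment of the infinite $0^{\rm th}$ column: justifying the $i \to \infty$ (resp.\ $k \to \infty$) limit of the fused weights, confirming that $\infty$ is forced through every vertex of that column, and checking that the resulting infinite product $\prod_{i}(-sx_i;q)_{\infty}/(s^2;q)_{\infty}$ matches the chosen normalization on the nose. This last point is precisely why the coefficients $x^{i}(-s/x;q)_{i}/(q;q)_{i}$ were built into the operators $\mathbb{C}(x)$ and $\ast{\mathbb{B}}(x)$ in \eqref{wh_ops}; everything else reduces to bookkeeping with the partition-state conventions \eqref{pos_partition}.
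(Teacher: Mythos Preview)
Your proposal is correct and follows essentially the same approach as the paper: you analyse the $0^{\rm th}$ column of the lattice in Figure~\ref{fig:skew_wh}, take the $i\to\infty$ limit of \eqref{fused_wt} to obtain the same vertex weight the paper records, cancel the resulting factor $\prod_{i=1}^{n}(-sx_i;q)_{\infty}/(s^2;q)_{\infty}$ against the normalization, and then match the remaining rows with the definition \eqref{wh_ops} of $\mathbb{C}(x)$ (respectively $\ast{\mathbb{B}}(x)$). Your explicit bookkeeping on the row ordering and on why the reversed order $\ast{\mathbb{B}}(y_n)\cdots\ast{\mathbb{B}}(y_1)$ arises from the dual lattice is a bit more detailed than the paper's brief ``completely analogous'', but the argument is the same.
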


\begin{proof}
For the proof of \eqref{skew-s-wh}, we start from the lattice expression for $\mathbb{F}_{\lambda/\mu}(x_1,\dots,x_n)$, shown on the left panel of Figure \ref{fig:skew_wh}. Counting from bottom to top, the $i^{\rm th}$ vertex in the $0^{\rm th}$ column is of the form $\fvert{\infty}{0}{\infty}{\ell_i}{0.3}$, with Boltzmann weight
\begin{align*}
W_{x_i}
\left(
\begin{gathered}
\begin{tikzpicture}[scale=0.5,baseline=(current bounding box.center)]
\draw[lgray,line width=5pt] (-1,0) -- (1,0);
\draw[lgray,line width=5pt] (0,-1) -- (0,1);
\node[left] at (-0.8,0) {\tiny $0$};\node[right] at (0.8,0) {\tiny $\ell_i$};
\node[below] at (0,-0.8) {\tiny $\infty$};\node[above] at (0,0.8) {\tiny $\infty$};
\end{tikzpicture}
\end{gathered}
\right)
= 
x_i^{\ell_i}\
\frac{(-s/x_i;q)_{\ell_i}\ (-sx_i;q)_{\infty}}
{(q;q)_{\ell_i}\ (s^2;q)_{\infty}},
\end{align*}
where $\ell_i \geq 0$ takes any non-negative integer value. Deleting the factor $\prod_{i=1}^{n} (-sx_i;q)_{\infty} / (s^2;q)_{\infty}$ which is common to the $0^{\rm th}$ column of the lattice (as per the definition of $\mathbb{F}_{\lambda/\mu}(x_1,\dots,x_n)$), and summing over all $\ell_1,\dots,\ell_n \geq 0$, the $i^{\rm th}$ row of the lattice then manifestly corresponds with the operator $\mathbb{C}(x_i)$ (\cf\ equation \eqref{wh_ops}, the definition of $\mathbb{C}(x)$). This proves the formula \eqref{skew-s-wh}.

The proof of \eqref{dual-skew-s-wh} is completely analogous; there, one starts from the lattice expression for $\mathbb{F}^{*}_{\lambda/\mu}(y_1,\dots,y_n)$, shown on the right panel of Figure \ref{fig:skew_wh}.

\end{proof}

\subsection{Exchange relations}

In this section we prove some important properties of the operators \eqref{wh_ops}. The first is their self-commutativity, Proposition \ref{prop:commute}. The relations \eqref{commute} allow us to show that the \wh polynomials are symmetric in their variables, which is not obvious from their lattice definition. The remaining properties are the exchange relations \eqref{CB}, \eqref{cB} and \eqref{Cb}, in Proposition \ref{prop:exchange}. These relations play a key role in deriving Cauchy-type summation identities involving the \wh polynomials.

\begin{prop}
\label{prop:commute}
{\rm
The operators \eqref{wh_ops} commute amongst themselves:
\begin{align}
\label{commute}
[\mathbb{C}(x),\mathbb{C}(y)]
=
[\overline{\mathbb{B}}(x),\overline{\mathbb{B}}(y)]
=
0,
\end{align}
for arbitrary complex parameters $x$ and $y$.
}
\end{prop}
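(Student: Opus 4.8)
The plan is to deduce \eqref{commute} from the Yang--Baxter equation \eqref{fused_yb_xy} for the fused weights $W_x$ (and its analogue for $\bW_x$, which holds in exactly the same form since \eqref{dual_wt_transform} realizes $\bW_x$ as a conjugate of $W_x$ on the vertical spaces), via the standard ``$RTT$'' argument, carefully adapted to the particular combinations \eqref{wh_ops}.

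The first step is to promote \eqref{fused_yb_xy} to an intertwining relation for the monodromy matrices $\mathbb{T}(x)$ (and $\overline{\mathbb{T}}(x)$). This is obtained by iterating \eqref{fused_yb_xy} column by column and dragging the $\mathcal{R}$-factor across the $L$-column strip, exactly as one passes from \eqref{yb-eqn} to the monodromy-matrix intertwining relation in Section \ref{sec:row_hl}. No analytic difficulty arises in the limit $L\to\infty$: on any fixed basis vector of $\wt{\mathbb{V}}$ only finitely many columns, and finitely many values of each summation index, contribute, so the limit may be taken termwise.

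The second step extracts \eqref{commute} from this relation. One takes the component in which both outgoing auxiliary states are set to $0$; since $\mathcal{R}_{x,y}$ is supported on $i+j=k+\ell$ and $\mathcal{R}_{x,y}(0,0;0,0)=1$, the factor of $\mathcal{R}_{x,y}$ on one side of the intertwiner then drops out entirely, while on the other side it survives acting on the incoming auxiliary states. Dressing those incoming states with the weights $\phi_z(i):=z^{i}(-s/z;q)_{i}/(q;q)_{i}$ --- precisely the coefficients that dress the $0$th column in \eqref{0th-col} and that define \eqref{wh_ops} --- and using $\sum_{i\geq 0}\phi_z(i)\,\mathbb{T}_z(i;0)=\mathbb{C}(z)$, the identity $\mathbb{C}(x)\mathbb{C}(y)=\mathbb{C}(y)\mathbb{C}(x)$ reduces to the scalar statement that $\mathcal{R}_{x,y}$ fixes the rank-one tensor $\bigl(\sum_{i\geq 0}\phi_x(i)\ket{i}\bigr)\otimes\bigl(\sum_{i\geq 0}\phi_y(i)\ket{i}\bigr)$, i.e.\ to
\begin{align*}
\sum_{i_1,i_2\geq 0}\phi_x(i_1)\,\phi_y(i_2)\,\mathcal{R}_{x,y}(i_2,i_1;k_2,k_1)=\phi_x(k_1)\,\phi_y(k_2),
\qquad k_1,k_2\in\mathbb{Z}_{\geq 0}.
\end{align*}

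The third and final step is a direct verification of this identity. The conservation constraint inside $\mathcal{R}_{x,y}$ lets one write $i_2=k_1+t$, $i_1=k_2-t$, turning the left-hand side into a single terminating sum over $0\leq t\leq k_2$; after cancelling common $q$-Pochhammer factors one is left with the elementary $q$-binomial addition formula
\begin{align*}
\prod_{i=0}^{n-1}(y+sq^{i})
=
\sum_{t=0}^{n}\frac{(q;q)_n}{(q;q)_t\,(q;q)_{n-t}}
\Bigl(\,\prod_{i=0}^{\,n-t-1}(x+sq^{i})\Bigr)
\Bigl(\,\prod_{i=0}^{\,t-1}(y-xq^{i})\Bigr),
\qquad n=k_2,
\end{align*}
which follows by a short induction on $n$ (it is the $q$-analogue of the binomial expansion of a falling factorial about a new base point, and may equally be read off the terminating $q$-Chu--Vandermonde summation). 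The relation $[\overline{\mathbb{B}}(x),\overline{\mathbb{B}}(y)]=0$ is obtained by running the identical argument with $\overline{\mathbb{T}}$, $\bW_x$ and the same $\mathcal{R}_{x,y}$. The only step with real content is the last one --- pinning down and proving the correct $q$-hypergeometric ``fixed-vector'' identity for the fused $\mathcal{R}$-matrix; the passage to monodromy matrices and the boundary specialization are routine once the bookkeeping of auxiliary indices is set up carefully.
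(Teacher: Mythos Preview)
Your proof is correct, but it takes a genuinely different route from the paper's.

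The paper never verifies a ``fixed-vector'' identity for $\mathcal{R}_{x,y}$. Instead, it keeps the $0^{\rm th}$ column inside the lattice and applies the iterated Yang--Baxter equation \eqref{fused_yb_xy} with \emph{both} incoming and outgoing auxiliary states equal to~$0$. Since $\mathcal{R}_{x,y}(0,0;0,0)=1$ and conservation kills all other components, the $\mathcal{R}$-matrix drops out on \emph{both} sides, and one obtains directly the commutation $\mathbb{T}_y(0;0)\,\mathbb{T}_x(0;0)=\mathbb{T}_x(0;0)\,\mathbb{T}_y(0;0)$ on columns $0,\dots,L$ (this is \eqref{00exchange}). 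Only then does one send $m_0,n_0\to\infty$: the $0^{\rm th}$ column with infinite occupation automatically produces the weights $\phi_z(\ell)=z^{\ell}(-s/z;q)_{\ell}/(q;q)_{\ell}$ on the edge passing into columns $1,2,\dots$, together with a common scalar $(-sx;q)_\infty(-sy;q)_\infty/(s^2;q)_\infty^2$ that cancels from both sides. The $[\overline{\mathbb{B}}(x),\overline{\mathbb{B}}(y)]=0$ relation is then deduced from the first via \eqref{dual_wt_transform}, exactly as you do.

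Your approach instead removes the $0^{\rm th}$ column up front, works directly with the monodromy matrices on $\widetilde{\mathbb{V}}$, and must therefore check by hand that the $\phi$-weighted boundary is compatible with $\mathcal{R}_{x,y}$ --- the $q$-binomial identity you write down and prove. In effect, your scalar identity is precisely what the single-column Yang--Baxter equation \eqref{fused_yb_xy} with $i_3=j_3=\infty$ would give you, so the two arguments are secretly the same computation organized differently. The paper's version is slicker because it lets the integrable structure (the $0^{\rm th}$ column plus YBE) do the work, while yours is more self-contained and makes the underlying hypergeometric content explicit.
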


\begin{proof}
A particularly simple case of the Yang--Baxter equation \eqref{fused_yb_xy} is recovered by choosing $i_1 = i_2 = j_1 = j_2 =0$:
\begin{multline*}
\sum_{k_1,k_2,k_3=0}^{\infty}
\mathcal{R}_{x,y}(0,0;k_2,k_1)
W_{y}(i_3,k_1;k_3,0)
W_{x}(k_3,k_2;j_3,0)
=
\\
\sum_{k_1,k_2,k_3=0}^{\infty}
W_{x}(i_3,0;k_3,k_2)
W_{y}(k_3,0;j_3,k_1)
\mathcal{R}_{x,y}(k_2,k_1;0,0).
\end{multline*}
The summation over $k_1$ and $k_2$ becomes trivial, since $\mathcal{R}_{x,y}(i,j;k,\ell) = 0$ unless $i+j = k+\ell$. This constrains $k_1$ and $k_2$ to be zero, and we read (after dropping $\mathcal{R}_{x,y}(0,0;0,0) = 1$ from both sides of the equation)
\begin{align}
\label{trivial_eqn}
\sum_{k_3}
W_{y}(i_3,0;k_3,0)
W_{x}(k_3,0;j_3,0)
=
\sum_{k_3}
W_{x}(i_3,0;k_3,0)
W_{y}(k_3,0;j_3,0),
\end{align}
which is true by inspection, since either side of this equation vanishes unless $i_3 = j_3 = k_3$. Moreover, using $L+1$ iterations of \eqref{fused_yb_xy} and applying the same logic as above, we obtain the following non-trivial commutation relation between row operators, generalizing \eqref{trivial_eqn}:
\begin{align}
\label{00exchange}
\begin{gathered}
\begin{tikzpicture}[scale=0.5,baseline=(current bounding box.center),>=stealth]
\foreach\x in {0,...,6}{
\draw[lgray,line width=5pt] (\x,2) -- (\x,4);
}
\draw[lgray,line width=5pt] (-1,3) node[left,black] {\tiny $0$} -- (7,3) node[right,black] {\tiny $0$};
\foreach\x in {0,...,6}{
\draw[lgray,line width=5pt] (\x,0) -- (\x,2);
}
\draw[lgray,line width=5pt] (-1,1) node[left,black] {\tiny $0$} -- (7,1) node[right,black] {\tiny $0$};
\node[above] at (0,4) {\tiny $m_0$};
\node[above] at (1,4) {\tiny $m_1$};
\node[above] at (3,4) {\tiny $\cdots$};
\node[above] at (6,4) {\tiny $m_L$};
\node[below] at (0,0) {\tiny $n_0$};
\node[below] at (1,0) {\tiny $n_1$};
\node[below] at (3,0) {\tiny $\cdots$};
\node[below] at (6,0) {\tiny $n_L$};
\node[left] at (-1.8,3) {$W_x\Big($}; \node[right] at (7.7,3) {$\Big)$};
\node[left] at (-1.8,1) {$W_y\Big($}; \node[right] at (7.7,1) {$\Big)$};
\end{tikzpicture}
\end{gathered}
=
\begin{gathered}
\begin{tikzpicture}[scale=0.5,baseline=(current bounding box.center),>=stealth]
\foreach\x in {0,...,6}{
\draw[lgray,line width=5pt] (\x,2) -- (\x,4);
}
\draw[lgray,line width=5pt] (-1,3) node[left,black] {\tiny $0$} -- (7,3) node[right,black] {\tiny $0$};
\foreach\x in {0,...,6}{
\draw[lgray,line width=5pt] (\x,0) -- (\x,2);
}
\draw[lgray,line width=5pt] (-1,1) node[left,black] {\tiny $0$} -- (7,1) node[right,black] {\tiny $0$};
\node[above] at (0,4) {\tiny $m_0$};
\node[above] at (1,4) {\tiny $m_1$};
\node[above] at (3,4) {\tiny $\cdots$};
\node[above] at (6,4) {\tiny $m_L$};
\node[below] at (0,0) {\tiny $n_0$};
\node[below] at (1,0) {\tiny $n_1$};
\node[below] at (3,0) {\tiny $\cdots$};
\node[below] at (6,0) {\tiny $n_L$};
\node[left] at (-1.8,3) {$W_y\Big($}; \node[right] at (7.7,3) {$\Big)$};
\node[left] at (-1.8,1) {$W_x\Big($}; \node[right] at (7.7,1) {$\Big)$};
\end{tikzpicture}
\end{gathered},
\end{align}
where $m_0,\dots,m_L$ and $n_0,\dots,n_L$ are arbitrary non-negative integers. Sending $m_0,n_0,L \rightarrow \infty$ and deleting the factor $(-sx;q)_{\infty} (-sy;q)_{\infty}  / (s^2;q)^2_{\infty}$ which becomes common to both sides of \eqref{00exchange} (\cf\ the proof of Proposition \ref{prop:wh_exp}), we obtain precisely $\mathbb{C}(y) \mathbb{C}(x) = \mathbb{C}(x) \mathbb{C}(y)$, as required.

The second relation, $[\mathbb{B}^{*}(x),\mathbb{B}^{*}(y)] = 0$, can be deduced from the first using the transformation property \eqref{dual_wt_transform}.
\end{proof}

\begin{cor}{\rm
The skew \wh polynomials are symmetric in their variables, by virtue of the commutativity \eqref{commute} of the operators in \eqref{skew-s-wh} and \eqref{dual-skew-s-wh}.
}
\end{cor}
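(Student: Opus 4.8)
The statement is an immediate consequence of the algebraic formulae of Proposition \ref{prop:wh_exp} together with the commutativity established in Proposition \ref{prop:commute}, and the plan is simply to assemble these two ingredients. Recall from \eqref{skew-s-wh} that $\mathbb{F}_{\lambda/\mu}(x_1,\dots,x_n) = \bbra{\mu'}\mathbb{C}(x_1)\cdots\mathbb{C}(x_n)\kett{\lambda'}$, so it suffices to show that the ordered product $\mathbb{C}(x_1)\cdots\mathbb{C}(x_n)$ is unchanged under an arbitrary permutation of its arguments. Since $S_n$ is generated by adjacent transpositions, it is enough to check invariance under swapping $x_i$ and $x_{i+1}$, which is exactly the relation $[\mathbb{C}(x_i),\mathbb{C}(x_{i+1})]=0$ supplied by \eqref{commute}. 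Matrix-element-by-matrix-element this yields symmetry of $\mathbb{F}_{\lambda/\mu}(x_1,\dots,x_n)$ in $(x_1,\dots,x_n)$.

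The same argument applies verbatim to the dual skew polynomials: by \eqref{dual-skew-s-wh} one has $\mathbb{F}^{*}_{\lambda/\mu}(y_1,\dots,y_n) = \bbra{\lambda'}\mathbb{B}^{*}(y_n)\cdots\mathbb{B}^{*}(y_1)\kett{\mu'}$, and the commutativity $[\mathbb{B}^{*}(x),\mathbb{B}^{*}(y)]=0$ from \eqref{commute} again forces invariance under all transpositions of the variables. Alternatively, symmetry of $\mathbb{F}^{*}_{\lambda/\mu}$ follows from that of $\mathbb{F}_{\lambda/\mu}$ since the two differ only by the scalar prefactor $\tilde{\c}_{\lambda'}(q,s)/\tilde{\c}_{\mu'}(q,s)$, which is independent of the spectral variables.

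There is essentially no obstacle here: all the genuine work was carried out in proving Proposition \ref{prop:commute}, and the present corollary is a purely formal deduction. The only point worth emphasising is that symmetry is \emph{not} manifest from the lattice construction in Figure \ref{fig:skew_wh}, where $x_1,\dots,x_n$ label distinct horizontal rows and enter the partition function asymmetrically; one could in principle establish it directly at the level of partition functions by inserting the fused $\mathcal{R}$-matrix $\mathcal{R}_{x,y}$ between two adjacent rows and dragging it through the lattice by repeated use of \eqref{fused_yb_xy} (a standard ``railroad'' argument), but the monodromy-matrix formalism packages precisely this computation and makes the proof a one-line consequence of $[\mathbb{C}(x),\mathbb{C}(y)]=0$.
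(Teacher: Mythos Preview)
Your proposal is correct and matches the paper's approach exactly: the paper does not provide a separate proof for this corollary, since the justification is already contained in the statement itself (symmetry follows from \eqref{skew-s-wh}--\eqref{dual-skew-s-wh} combined with the commutativity \eqref{commute}). Your write-up simply unpacks this one-line observation in more detail, including the standard reduction to adjacent transpositions and the remark that the scalar prefactor relating $\mathbb{F}^{*}$ to $\mathbb{F}$ is variable-independent.
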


\begin{prop}
\label{prop:exchange}
{\rm
The following exchange relations hold:
\begin{align}
\label{CB}
\mathbb{C}(x) \overline{\mathbb{B}}(y)
&=
\frac{(-sx;q)_{\infty} (-sy;q)_{\infty}}
{(s^2;q)_{\infty} (xy;q)_{\infty}}\ 
\overline{\mathbb{B}}(y) \mathbb{C}(x),
\quad
\text{for}\ |x|, |y| < 1,
\\
\label{cB}
\widetilde{C}(u) \overline{\mathbb{B}}(x)
&=
\left( \frac{1+ux}{1-su} \right)
\overline{\mathbb{B}}(x) \widetilde{C}(u),
\\
\label{Cb}
\mathbb{C}(x) \ast{\widetilde{B}}(u) 
&=
\left( \frac{1+ux}{1-su} \right)
\ast{\widetilde{B}}(u) \mathbb{C}(x),
\\
\label{cb}
\widetilde{C}(u) \ast{\widetilde{B}}(v) 
&=
\left( \frac{1-quv}{1-uv} \right)
\ast{\widetilde{B}}(v) \widetilde{C}(u),
\quad
\text{for}\ |(u-s)(v-s)| < |(1-su)(1-sv)|.
\end{align}
}
\end{prop}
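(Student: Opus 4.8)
The plan is to derive all four exchange relations by the ``train'' (iterated Yang--Baxter) argument already used for Proposition~\ref{prop:commute}: realize each product of operators as (a limit of) a two-row partition function, insert an appropriate $\mathcal{R}$-matrix at the left boundary, drag it across the lattice column by column using a suitable Yang--Baxter equation, and read off at the right boundary the scalar equal to the product over all columns of the entries the $\mathcal{R}$-matrix leaves there. The organizing observation is that $\mathbb{C}(x)$ and $\mathbb{C}(y)$ commute (relation \eqref{commute}) because they move paths through the columns in the \emph{same} vertical direction, so that the $\mathcal{R}$-matrix coming from \eqref{fused_yb_xy} is trivial in the relevant boundary sector; in each of \eqref{CB}, \eqref{cB}, \eqref{Cb}, \eqref{cb} the two rows being exchanged propagate paths in \emph{opposite} vertical directions --- compare the indicator $i\geq\ell$ in \eqref{fused_wt} with $k\geq\ell$ in \eqref{fused_dual}, and similarly for the unfused $C$- and $\b{D}$-rows --- which, after one uses \eqref{dual_wt_transform} (resp.\ \eqref{wt_transform}, \eqref{bar_norm}) to turn the dual row into a vertically reflected (``time-reversed'') ordinary row, amounts to inverting one spectral parameter and is precisely why the scalars come out as functions of the \emph{product} of the two parameters rather than of a ratio.

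For \eqref{cb}, which needs no fusion, I would start from the Yang--Baxter-algebra relation \eqref{exchange}, pass to $\b{D}_L$ via \eqref{bar_norm} and let $L\to\infty$ exactly as in the derivation of \eqref{HL_exchange} --- the hypothesis $|(u-s)(v-s)|<|(1-su)(1-sv)|$ being precisely what makes the superfluous third term vanish --- and then sandwich the resulting operator identity \eqref{HL_exchange} between $\bra{\infty}_0(\cdots)\ket{\infty}_0$. Since by \eqref{inf_boltz} the $0^{\rm th}$-column weights with infinite occupation are independent of the left-edge label and equal $u^{\ell}/(1-su)$, the intermediate resolution $\ket{\infty}_0\bra{\infty}_0$ acts as the identity, and the prefactors $(1-su),(1-sv)$ built into $\wt{C}(u),\ast{\wt{B}}(v)$ cancel the denominators; the relation thus reproduces itself with $C(u),\b{D}(v)$ replaced by $\wt{C}(u),\ast{\wt{B}}(v)$, which is \eqref{cb}. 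For \eqref{cB} and \eqref{Cb}, which pair an unfused spin-$\tfrac12$ row (parameter $u$) with a fused, analytically continued row (parameter $x$), the required intertwiner is built as in the Corollary leading to \eqref{fused_yb_xy}: take the $(1+J)$-vertex version of \eqref{yb-eqn} with one line carrying $u$ and the other $J$ lines a geometric progression based at $s$, fuse the latter via Theorem~\ref{thm:horizontal_join}, simplify through \eqref{weight_u=s}, and continue $q^{J}\mapsto -x/s$; on the boundary sector relevant to \eqref{wh_ops} the continued $\mathcal{R}$-matrix has one surviving component per column, which on an infinitely occupied column equals $(1-sq^{J}u)/(1-su)$ at $q^{J}=-x/s$, i.e.\ $(1+ux)/(1-su)$ --- the continuation of the weight $w_u(g,0;g,0)$ of \eqref{vertices}. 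The train argument then yields \eqref{cB}, and \eqref{Cb} follows identically in the dual model \eqref{fused_dual}/\eqref{red_vertices}, or directly from \eqref{cB} by the gray$\leftrightarrow$red symmetry, just as the second half of Proposition~\ref{prop:commute} followed from the first.

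The main case \eqref{CB} has both rows fused. By \eqref{dual_wt_transform} the $\overline{\mathbb{B}}(y)$-row is, up to a diagonal gauge, a vertically reflected $\mathbb{C}(y)$-row, and the Yang--Baxter equation intertwining a $\mathbb{C}(x)$-row with a vertically reflected $\mathbb{C}(y)$-row is obtained from \eqref{fused_yb_xy} by the appropriate crossing (partial transposition) of $\mathcal{R}_{x,y}$; the crossed $\mathcal{R}$-matrix depends on the \emph{product} $xy$ rather than on $x/y$, which is the origin of the $(xy;q)_{\infty}$ in the denominator --- the same product already visible in the kernel of the Hall--Littlewood $C$--$\b{D}$ exchange \eqref{HL_exchange}. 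Carrying out the train argument with this crossed $\mathcal{R}$-matrix, then forming the combinations \eqref{wh_ops} that define $\mathbb{C}(x),\overline{\mathbb{B}}(y)$ and letting $L\to\infty$ together with the $0^{\rm th}$-column occupation, one checks that both sides acquire the same divergent $0^{\rm th}$-column factor (which therefore cancels) while summation over the $\mathcal{R}$-matrix's internal states leaves precisely $\frac{(-sx;q)_{\infty}(-sy;q)_{\infty}}{(s^2;q)_{\infty}(xy;q)_{\infty}}$; the hypothesis $|x|,|y|<1$ secures convergence of these sums and products and the vanishing of the unwanted term, in exact parallel with the passage from \eqref{exchange} to \eqref{HL_exchange}.

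The genuinely non-routine points are two. First, pinning down the correct intertwiners: for \eqref{cB}, \eqref{Cb}, \eqref{CB} the relevant $\mathcal{R}$-matrix is not the ``ratio'' one of \eqref{fused_yb_xy} but the ``crossing'' intertwiner for a line and an anti-line (a spin-$\tfrac12\times$spin-$J$, resp.\ spin-$J\times$spin-$J'$, $\mathcal{R}$-matrix with product-type spectral dependence), whose components and Yang--Baxter property must be established. Second, and more delicate, is the simultaneous triple limit in \eqref{CB} --- $L\to\infty$, infinite $0^{\rm th}$-column occupation, and the defining summations of $\mathbb{C}(x),\overline{\mathbb{B}}(y)$, all taken at once --- where one must verify that the divergent parts of the two sides coincide and factor out, and then evaluate the finite remainder; this last step comes down to a single $q$-Gauss-type summation producing the displayed product.
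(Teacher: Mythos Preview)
Your overall strategy---realize each product as a two-row partition function and intertwine via a Yang--Baxter argument---is correct, and for \eqref{cb} your derivation coincides with the paper's. For the remaining three relations, however, you take a harder route than the paper does, and the ``non-routine points'' you flag are precisely what the paper's approach sidesteps.

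The paper does \emph{not} construct any new crossed or hybrid $\mathcal{R}$-matrices. Instead, it starts from the \emph{already proved} scalar exchange relation \eqref{HL_exchange} iterated $IJ$ times, giving the ``master'' identity
\[
\prod_{i=1}^{I} C(u_i)\prod_{j=1}^{J} \b{D}(v_j)
=
\prod_{i,j}\frac{1-qu_iv_j}{1-u_iv_j}\,
\prod_{j=1}^{J} \b{D}(v_j)\prod_{i=1}^{I} C(u_i),
\]
valid on $\mathbb{V}$ under the usual constraint on $u_i,v_j,s$. All four relations \eqref{CB}--\eqref{cb} are then obtained as specializations of this single identity: \eqref{cb} is $I=J=1$; \eqref{cB} is $I=1$ with a $\{J;s\}$-specialization of $(v_1,\dots,v_J)$ followed by analytic continuation $q^J\mapsto -x/s$; \eqref{Cb} is the mirror case $J=1$; and \eqref{CB} is the fully fused case with both $\{I;s\}$ and $\{J;s\}$ specializations and continuation in both $q^I,q^J$. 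In each case the scalar factor is read off directly by specializing the known product $\prod_{i,j}(1-qu_iv_j)/(1-u_iv_j)$---for instance, under the double geometric specialization it telescopes to $(s^2q^J;q)_I/(s^2;q)_I = (s^2q^I;q)_\infty(s^2q^J;q)_\infty/\bigl((s^2;q)_\infty(s^2q^{I+J};q)_\infty\bigr)$, which continues to the factor in \eqref{CB}. The passage $m_0,n_0\to\infty$ is then handled exactly as you describe for \eqref{cb}.

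The advantage of this route is that no crossed Yang--Baxter equation needs to be stated or proved, no new intertwiner components need to be computed, and the scalar factor is explicit from the outset rather than emerging from a $q$-Gauss summation at the end. Your approach via partial transposition would presumably also work, but the two difficulties you single out---establishing the crossing intertwiner and controlling the triple limit---are genuine, whereas in the paper's argument the former disappears entirely and the latter reduces to the routine observation that both sides of a finite-$I,J$ polynomial identity continue jointly in $q^I,q^J$.
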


\begin{remark}{\rm
The $s=0$ case of the relations \eqref{cB} and \eqref{Cb} was previously obtained by Duval and Pasquier in \cite[Section 6]{DuvalP}.
}
\end{remark}

\begin{proof}
We begin by noting the relation
\begin{align*}
\prod_{i=1}^{I} C(u_i)
\prod_{j=1}^{J} \b{D}(v_j)
=
\prod_{i=1}^{I}
\prod_{j=1}^{J}
\frac{1-qu_i v_j}{1-u_i v_j}
\prod_{j=1}^{J} \b{D}(v_j)
\prod_{i=1}^{I} C(u_i)
\end{align*}
which holds in ${\rm End}(\mathbb{V})$, provided $|(u_i-s)(v_j-s)| < |(1-s u_i)(1-s v_j)|$ for all $1 \leq i \leq I$ and $1 \leq j \leq J$. The graphical version of this relation is as follows:
\begin{align}
\label{master}
\begin{gathered}
\begin{tikzpicture}[scale=0.5,baseline=(current bounding box.center),>=stealth]
\foreach\x in {0,...,6}{
\draw[lred,line width=5pt] (\x,2) -- (\x,7);
}
\foreach\y in {3,...,6}{
\draw[lred,thick] (-1,\y) node[left,red] {\tiny $0$} -- (7,\y) node[right,red] {\tiny $0$};
}
\foreach\x in {0,...,6}{
\draw[lgray,line width=5pt] (\x,-2) -- (\x,2);
}
\foreach\y in {-1,...,1}{
\draw[lgray,thick] (-1,\y) node[left,black] {\tiny $1$} -- (7,\y) node[right,black] {\tiny $0$};
\draw[thick,->] (-1,\y) -- (0,\y);
}
\node[above] at (0,7) {\tiny $m_0$};
\node[above] at (1,7) {\tiny $m_1$};
\node[above] at (2,7) {\tiny $m_2$};
\node[above] at (3,7) {\tiny $\cdots$};
\node[below] at (0,-2) {\tiny $n_0$};
\node[below] at (1,-2) {\tiny $n_1$};
\node[below] at (2,-2) {\tiny $n_2$};
\node[below] at (3,-2) {\tiny $\cdots$};
\node[left] at (-1.8,6) {$v_1$};
\node[left] at (-1.8,4.5) {$\vdots$};
\node[left] at (-1.8,3) {$v_J$};
\node[left] at (-1.8,1) {$u_I$};
\node[left] at (-1.8,0) {$\vdots$};
\node[left] at (-1.8,-1) {$u_1$};
\end{tikzpicture}
\end{gathered}
=
\prod_{i=1}^{I}
\prod_{j=1}^{J}
\frac{1-qu_i v_j}{1-u_i v_j}
\times
\begin{gathered}
\begin{tikzpicture}[scale=0.5,baseline=(current bounding box.center),>=stealth]
\foreach\x in {0,...,6}{
\draw[lred,line width=5pt] (\x,-2) -- (\x,3);
}
\foreach\y in {-1,...,2}{
\draw[lred,thick] (-1,\y) node[left,red] {\tiny $0$} -- (7,\y) node[right,red] {\tiny $0$};
}
\foreach\x in {0,...,6}{
\draw[lgray,line width=5pt] (\x,3) -- (\x,7);
}
\foreach\y in {4,...,6}{
\draw[lgray,thick] (-1,\y) node[left,black] {\tiny $1$} -- (7,\y) node[right,black] {\tiny $0$};
\draw[thick,->] (-1,\y) -- (0,\y);
}
\node[above] at (0,7) {\tiny $m_0$};
\node[above] at (1,7) {\tiny $m_1$};
\node[above] at (2,7) {\tiny $m_2$};
\node[above] at (3,7) {\tiny $\cdots$};
\node[below] at (0,-2) {\tiny $n_0$};
\node[below] at (1,-2) {\tiny $n_1$};
\node[below] at (2,-2) {\tiny $n_2$};
\node[below] at (3,-2) {\tiny $\cdots$};
\node[left] at (-1.8,2) {$v_1$};
\node[left] at (-1.8,0.5) {$\vdots$};
\node[left] at (-1.8,-1) {$v_J$};
\node[left] at (-1.8,6) {$u_I$};
\node[left] at (-1.8,5) {$\vdots$};
\node[left] at (-1.8,4) {$u_1$};
\end{tikzpicture}
\end{gathered}
\end{align}
where we have assumed the arbitrary boundary conditions $\ket{m_0}_0 \otimes \ket{m_1}_1 \otimes \cdots \in \mathbb{V}$ and $\bra{n_0}_0 \otimes \bra{n_1}_1 \otimes \cdots \in \mathbb{V}^{*}$ at the top and base of the lattice, respectively. Each relation \eqref{CB}--\eqref{cb} can be deduced by specializing \eqref{master} in a different way, as we now show.

\underline{\it Proof of \eqref{CB}.} Starting from \eqref{master}, we perform an $\{I;s\}$-specialization of the variables $(u_1,\dots,u_I)$ and a $\{J;s\}$-specialization of the variables $(v_1,\dots,v_J)$.\footnote{In making these specializations, one should ensure that $|s^2(1-q^{i-1})(1-q^{j-1})| < |(1-s^2 q^{i-1})(1-s^2 q^{j-1})|$ for all $1 \leq i \leq I$, $1 \leq j \leq J$, since this is necessary for \eqref{master} to remain valid.} These specializations instigate fusion in the lattices on the left and right hand sides of \eqref{master}, and we obtain the relation
\begin{multline}
\label{proof47}
\begin{gathered}
\begin{tikzpicture}[scale=0.5,baseline=(current bounding box.center),>=stealth]
\foreach\x in {0,...,6}{
\draw[lred,line width=5pt] (\x,2) -- (\x,4);
}
\draw[lred,line width=5pt] (-1,3) node[left,red] {\tiny $0$} -- (7,3) node[right,red] {\tiny $0$};
\foreach\x in {0,...,6}{
\draw[lgray,line width=5pt] (\x,0) -- (\x,2);
}
\draw[lgray,line width=5pt] (-1,1) node[left,black] {\tiny $I$} -- (7,1) node[right,black] {\tiny $0$};
\node[above] at (0,4) {\tiny $m_0$};
\node[above] at (1,4) {\tiny $m_1$};
\node[above] at (2,4) {\tiny $m_2$};
\node[above] at (3,4) {\tiny $\cdots$};
\node[below] at (0,0) {\tiny $n_0$};
\node[below] at (1,0) {\tiny $n_1$};
\node[below] at (2,0) {\tiny $n_2$};
\node[below] at (3,0) {\tiny $\cdots$};
\node[left] at (-1.8,3) {$w_s^{*(J)}\Big($}; \node[right] at (7.7,3) {$\Big)$};
\node[left] at (-1.8,1) {$w_s^{(I)}\Big($}; \node[right] at (7.7,1) {$\Big)$};
\end{tikzpicture}
\end{gathered}
=
\\
\rho^u_{\{I;s\}}
\rho^v_{\{J;s\}}
\left(
\prod_{i=1}^{I}
\prod_{j=1}^{J}
\frac{1-qu_i v_j}{1-u_i v_j}
\right)
\times
\begin{gathered}
\begin{tikzpicture}[scale=0.5,baseline=(current bounding box.center),>=stealth]
\foreach\x in {0,...,6}{
\draw[lgray,line width=5pt] (\x,2) -- (\x,4);
}
\draw[lgray,line width=5pt] (-1,3) node[left,black] {\tiny $I$} -- (7,3) node[right,black] {\tiny $0$};
\foreach\x in {0,...,6}{
\draw[lred,line width=5pt] (\x,0) -- (\x,2);
}
\draw[lred,line width=5pt] (-1,1) node[left,red] {\tiny $0$} -- (7,1) node[right,red] {\tiny $0$};
\node[above] at (0,4) {\tiny $m_0$};
\node[above] at (1,4) {\tiny $m_1$};
\node[above] at (2,4) {\tiny $m_2$};
\node[above] at (3,4) {\tiny $\cdots$};
\node[below] at (0,0) {\tiny $n_0$};
\node[below] at (1,0) {\tiny $n_1$};
\node[below] at (2,0) {\tiny $n_2$};
\node[below] at (3,0) {\tiny $\cdots$};
\node[left] at (-1.8,1) {$w_s^{*(J)}\Big($}; \node[right] at (7.7,3) {$\Big)$};
\node[left] at (-1.8,3) {$w_s^{(I)}\Big($}; \node[right] at (7.7,1) {$\Big)$};
\end{tikzpicture}
\end{gathered},
\end{multline}
where the vertices that appear in these partition functions are either of the form \eqref{weight_u=s} or \eqref{weight*_u=s}. The multiplicative factor on the right hand side of \eqref{proof47} may be easily calculated; due to telescopic cancellations, one has
\begin{align*}
\rho^u_{\{I;s\}}
\rho^v_{\{J;s\}}
\left(
\prod_{i=1}^{I}
\prod_{j=1}^{J}
\frac{1-qu_i v_j}{1-u_i v_j}
\right)
=
\rho^u_{\{I;s\}}
\left(
\prod_{i=1}^{I}
\frac{1-s q^J u_i}{1-s u_i}
\right)
=
\frac{(s^2 q^J;q)_I}{(s^2;q)_I}
=
\frac{(s^2 q^I;q)_{\infty} (s^2 q^J;q)_{\infty}}
{(s^2;q)_{\infty} (s^2 q^{I+J};q)_{\infty}}.
\end{align*}
We now exploit the freedom to choose the states at the top and bottom of the lattices in \eqref{proof47}, sending $m_0, n_0 \rightarrow \infty$ simultaneously. Summing explicitly over the possible contributions of the $0^{\rm th}$ column on either side of \eqref{proof47}, and deleting some trivial common factors from the equation, we obtain
\begin{multline}
\label{proof47-2}
\sum_{i=0}^{I}
\sum_{j=0}^{J}
(-sq^I)^i (-sq^J)^j 
\frac{(q^{-I};q)_i}{(q;q)_i}
\frac{(q^{-J};q)_j}{(q;q)_j}
\begin{gathered}
\begin{tikzpicture}[scale=0.5,baseline=(current bounding box.center),>=stealth]
\foreach\x in {1,...,6}{
\draw[lred,line width=5pt] (\x,2) -- (\x,4);
}
\draw[lred,line width=5pt] (0,3) node[left,red] {\tiny $j$} -- (7,3) node[right,red] {\tiny $0$};
\foreach\x in {1,...,6}{
\draw[lgray,line width=5pt] (\x,0) -- (\x,2);
}
\draw[lgray,line width=5pt] (0,1) node[left,black] {\tiny $i$} -- (7,1) node[right,black] {\tiny $0$};
\node[above] at (1,4) {\tiny $m_1$};
\node[above] at (2,4) {\tiny $m_2$};
\node[above] at (3,4) {\tiny $\cdots$};
\node[below] at (1,0) {\tiny $n_1$};
\node[below] at (2,0) {\tiny $n_2$};
\node[below] at (3,0) {\tiny $\cdots$};
\node[left] at (-0.8,3) {$w_s^{*(J)}\Big($}; \node[right] at (7.7,3) {$\Big)$};
\node[left] at (-0.8,1) {$w_s^{(I)}\Big($}; \node[right] at (7.7,1) {$\Big)$};
\end{tikzpicture}
\end{gathered}
\\
=
\frac{(s^2 q^I;q)_{\infty} (s^2 q^J;q)_{\infty}}
{(s^2;q)_{\infty} (s^2 q^{I+J};q)_{\infty}}
\times
\quad\quad\quad\quad\quad\quad\quad\quad\quad\quad\quad\quad\quad\quad\quad\quad\quad\quad\quad\quad\quad
\\
\sum_{i=0}^{I}
\sum_{j=0}^{J}
(-sq^I)^i (-sq^J)^j 
\frac{(q^{-I};q)_i}{(q;q)_i}
\frac{(q^{-J};q)_j}{(q;q)_j}
\begin{gathered}
\begin{tikzpicture}[scale=0.5,baseline=(current bounding box.center),>=stealth]
\foreach\x in {1,...,6}{
\draw[lgray,line width=5pt] (\x,2) -- (\x,4);
}
\draw[lgray,line width=5pt] (0,3) node[left,black] {\tiny $i$} -- (7,3) node[right,black] {\tiny $0$};
\foreach\x in {1,...,6}{
\draw[lred,line width=5pt] (\x,0) -- (\x,2);
}
\draw[lred,line width=5pt] (0,1) node[left,red] {\tiny $j$} -- (7,1) node[right,red] {\tiny $0$};
\node[above] at (1,4) {\tiny $m_1$};
\node[above] at (2,4) {\tiny $m_2$};
\node[above] at (3,4) {\tiny $\cdots$};
\node[below] at (1,0) {\tiny $n_1$};
\node[below] at (2,0) {\tiny $n_2$};
\node[below] at (3,0) {\tiny $\cdots$};
\node[left] at (-0.8,1) {$w_s^{*(J)}\Big($}; \node[right] at (7.7,3) {$\Big)$};
\node[left] at (-0.8,3) {$w_s^{(I)}\Big($}; \node[right] at (7.7,1) {$\Big)$};
\end{tikzpicture}
\end{gathered},
\end{multline}
where $\ket{m_1}_1 \otimes \ket{m_2}_2 \otimes \cdots \in \widetilde{\mathbb{V}}$ and $\bra{n_1}_1 \otimes \bra{n_2}_2 \otimes \cdots \in \widetilde{\mathbb{V}}^{*}$ are two arbitrary states.

We relax the constraint that at most $I$ (resp. $J$) paths pass through each grey (coloured) horizontal edge, and replace both summations $\sum_{i=0}^{I}$ ($\sum_{j=0}^{J}$) in \eqref{proof47-2} by $\sum_{i=0}^{\infty}$ ($\sum_{j=0}^{\infty}$). These modifications of \eqref{proof47-2} are legal, since they only add vanishing terms to the equation: if the summation indices satisfy $i>I$ or $j>J$ on either side of \eqref{proof47-2}, the resulting term must vanish because of the factors $(q^{-I};q)_i$ and $(q^{-J};q)_j$, and a similar comment applies to the internal horizontal edge states. 

Equation \eqref{proof47-2} then becomes an equality between two functions in $q^I$ and $q^J$, which holds for all $I \geq 1$ and $J \geq 1$. The final step is analytic continuation\footnote{This is possible because both sides of \eqref{proof47-2} are clearly jointly analytic in $q^I$ and $q^J$, given that $q$, $sq^I$ and $sq^J$ are all in the unit disc by assumption.} in $q^I$ and $q^J$. This replaces the operators appearing in the equation by those in \eqref{wh_ops}, and converts the factor on the right hand side to $(-sx;q)_{\infty} (-sy;q)_{\infty} (s^2;q)_{\infty}^{-1} (xy;q)_{\infty}^{-1}$, completing the proof of \eqref{CB}.

\underline{\it Proof of \eqref{cB} and \eqref{Cb}.} Let us focus firstly on the proof of \eqref{cB}. Returning to \eqref{master}, we consider the special case $I=1$ (while keeping $J$ generic), and take a $\{J;s\}$-specialization of the variables $(v_1,\dots,v_J)$. This results in the equation
\begin{multline}
\label{proof47-3}
\begin{gathered}
\begin{tikzpicture}[scale=0.5,baseline=(current bounding box.center),>=stealth]
\foreach\x in {0,...,6}{
\draw[lred,line width=5pt] (\x,2) -- (\x,4);
}
\draw[lred,line width=5pt] (-1,3) node[left,red] {\tiny $0$} -- (7,3) node[right,red] {\tiny $0$};
\foreach\x in {0,...,6}{
\draw[lgray,line width=5pt] (\x,0) -- (\x,2);
}
\draw[lgray,thick] (-1,1) node[left,black] {\tiny $1$} -- (7,1) node[right,black] {\tiny $0$};
\draw[thick,->] (-1,1) -- (0,1);
\node[above] at (0,4) {\tiny $m_0$};
\node[above] at (1,4) {\tiny $m_1$};
\node[above] at (2,4) {\tiny $m_2$};
\node[above] at (3,4) {\tiny $\cdots$};
\node[below] at (0,0) {\tiny $n_0$};
\node[below] at (1,0) {\tiny $n_1$};
\node[below] at (2,0) {\tiny $n_2$};
\node[below] at (3,0) {\tiny $\cdots$};
\node[left] at (-1.8,3) {$w_s^{*(J)}\Big($}; \node[right] at (7.7,3) {$\Big)$};
\node[left] at (-1.8,1) {$w_u\Big($}; \node[right] at (7.7,1) {$\Big)$};
\end{tikzpicture}
\end{gathered}
=
\left(
\frac{1-s q^J u}{1-su}
\right)
\times
\begin{gathered}
\begin{tikzpicture}[scale=0.5,baseline=(current bounding box.center),>=stealth]
\foreach\x in {0,...,6}{
\draw[lgray,line width=5pt] (\x,2) -- (\x,4);
}
\draw[lgray,thick] (-1,3) node[left,black] {\tiny $1$} -- (7,3) node[right,black] {\tiny $0$};
\draw[thick,->] (-1,3) -- (0,3);
\foreach\x in {0,...,6}{
\draw[lred,line width=5pt] (\x,0) -- (\x,2);
}
\draw[lred,line width=5pt] (-1,1) node[left,red] {\tiny $0$} -- (7,1) node[right,red] {\tiny $0$};
\node[above] at (0,4) {\tiny $m_0$};
\node[above] at (1,4) {\tiny $m_1$};
\node[above] at (2,4) {\tiny $m_2$};
\node[above] at (3,4) {\tiny $\cdots$};
\node[below] at (0,0) {\tiny $n_0$};
\node[below] at (1,0) {\tiny $n_1$};
\node[below] at (2,0) {\tiny $n_2$};
\node[below] at (3,0) {\tiny $\cdots$};
\node[left] at (-1.8,1) {$w_s^{*(J)}\Big($}; \node[right] at (7.7,3) {$\Big)$};
\node[left] at (-1.8,3) {$w_u\Big($}; \node[right] at (7.7,1) {$\Big)$};
\end{tikzpicture}
\end{gathered},
\end{multline}
in which the coloured rows consist of fused vertices \eqref{weight*_u=s}, while the grey rows are unfused and consist of the vertices \eqref{vertices} of the original model. 

We then repeat the procedure used in the proof of \eqref{CB}, sending $m_0, n_0 \rightarrow \infty$ and summing explicitly over the possible contributions of the $0^{\rm th}$ column on either side of \eqref{proof47-3}. In this case, we find that
\begin{multline*}
\sum_{i=0}^{1}
\sum_{j=0}^{J}
u^i (-sq^J)^j 
\frac{(q^{-J};q)_j}{(q;q)_j}
\begin{gathered}
\begin{tikzpicture}[scale=0.5,baseline=(current bounding box.center),>=stealth]
\foreach\x in {1,...,6}{
\draw[lred,line width=5pt] (\x,2) -- (\x,4);
}
\draw[lred,line width=5pt] (0,3) node[left,red] {\tiny $j$} -- (7,3) node[right,red] {\tiny $0$};
\foreach\x in {1,...,6}{
\draw[lgray,line width=5pt] (\x,0) -- (\x,2);
}
\draw[lgray,thick] (0,1) node[left,black] {\tiny $i$} -- (7,1) node[right,black] {\tiny $0$};
\node[above] at (1,4) {\tiny $m_1$};
\node[above] at (2,4) {\tiny $m_2$};
\node[above] at (3,4) {\tiny $\cdots$};
\node[below] at (1,0) {\tiny $n_1$};
\node[below] at (2,0) {\tiny $n_2$};
\node[below] at (3,0) {\tiny $\cdots$};
\node[left] at (-0.8,3) {$w_s^{*(J)}\Big($}; \node[right] at (7.7,3) {$\Big)$};
\node[left] at (-0.8,1) {$w_u\Big($}; \node[right] at (7.7,1) {$\Big)$};
\end{tikzpicture}
\end{gathered}
\\
=
\left(
\frac{1-s q^J u}{1-su}
\right)
\times
\sum_{i=0}^{1}
\sum_{j=0}^{J}
u^i (-sq^J)^j 
\frac{(q^{-J};q)_j}{(q;q)_j}
\begin{gathered}
\begin{tikzpicture}[scale=0.5,baseline=(current bounding box.center),>=stealth]
\foreach\x in {1,...,6}{
\draw[lgray,line width=5pt] (\x,2) -- (\x,4);
}
\draw[lgray,thick] (0,3) node[left,black] {\tiny $i$} -- (7,3) node[right,black] {\tiny $0$};
\foreach\x in {1,...,6}{
\draw[lred,line width=5pt] (\x,0) -- (\x,2);
}
\draw[lred,line width=5pt] (0,1) node[left,red] {\tiny $j$} -- (7,1) node[right,red] {\tiny $0$};
\node[above] at (1,4) {\tiny $m_1$};
\node[above] at (2,4) {\tiny $m_2$};
\node[above] at (3,4) {\tiny $\cdots$};
\node[below] at (1,0) {\tiny $n_1$};
\node[below] at (2,0) {\tiny $n_2$};
\node[below] at (3,0) {\tiny $\cdots$};
\node[left] at (-0.8,1) {$w_s^{*(J)}\Big($}; \node[right] at (7.7,3) {$\Big)$};
\node[left] at (-0.8,3) {$w_u\Big($}; \node[right] at (7.7,1) {$\Big)$};
\end{tikzpicture}
\end{gathered}.
\end{multline*}
After analytically continuing in $q^J$ (letting $q^J \mapsto -x/s$) the factor on the right hand side of the commutation relation is converted to $(1+ux)/(1-su)$, and \eqref{cB} is immediate.

The proof of \eqref{Cb} is very similar, so we shall not present it in detail. For this proof, one considers the special case $J=1$ of \eqref{master} (leaving $I$ generic) and takes an $\{I;s\}$-specialization of $(u_1,\dots,u_I)$. Sending $m_0,n_0 \rightarrow \infty$ and analytically continuing in $q^I$ then produces \eqref{Cb}.

\underline{\it Proof of \eqref{cb}.} This relation is the simplest of all: it corresponds to the special case $I = J = 1$ of \eqref{master}. After taking $m_0, n_0 \rightarrow \infty$ and expanding over all possible contributions from the $0^{\rm th}$ lattice column, we obtain
\begin{multline*}
\sum_{i=0}^{1}
\sum_{j=0}^{1}
u^i v^j
\begin{gathered}
\begin{tikzpicture}[scale=0.5,baseline=(current bounding box.center),>=stealth]
\foreach\x in {1,...,6}{
\draw[lred,line width=5pt] (\x,2) -- (\x,4);
}
\draw[lred,thick] (0,3) node[left,red] {\tiny $j$} -- (7,3) node[right,red] {\tiny $0$};
\foreach\x in {1,...,6}{
\draw[lgray,line width=5pt] (\x,0) -- (\x,2);
}
\draw[lgray,thick] (0,1) node[left,black] {\tiny $i$} -- (7,1) node[right,black] {\tiny $0$};
\node[above] at (1,4) {\tiny $m_1$};
\node[above] at (2,4) {\tiny $m_2$};
\node[above] at (3,4) {\tiny $\cdots$};
\node[below] at (1,0) {\tiny $n_1$};
\node[below] at (2,0) {\tiny $n_2$};
\node[below] at (3,0) {\tiny $\cdots$};
\node[left] at (-0.8,3) {$w_v^{*}\Big($}; \node[right] at (7.7,3) {$\Big)$};
\node[left] at (-0.8,1) {$w_u\Big($}; \node[right] at (7.7,1) {$\Big)$};
\end{tikzpicture}
\end{gathered}
\\
=
\left(
\frac{1-q uv}{1-uv}
\right)
\times
\sum_{i=0}^{1}
\sum_{j=0}^{1}
u^i v^j
\begin{gathered}
\begin{tikzpicture}[scale=0.5,baseline=(current bounding box.center),>=stealth]
\foreach\x in {1,...,6}{
\draw[lgray,line width=5pt] (\x,2) -- (\x,4);
}
\draw[lgray,thick] (0,3) node[left,black] {\tiny $i$} -- (7,3) node[right,black] {\tiny $0$};
\foreach\x in {1,...,6}{
\draw[lred,line width=5pt] (\x,0) -- (\x,2);
}
\draw[lred,thick] (0,1) node[left,red] {\tiny $j$} -- (7,1) node[right,red] {\tiny $0$};
\node[above] at (1,4) {\tiny $m_1$};
\node[above] at (2,4) {\tiny $m_2$};
\node[above] at (3,4) {\tiny $\cdots$};
\node[below] at (1,0) {\tiny $n_1$};
\node[below] at (2,0) {\tiny $n_2$};
\node[below] at (3,0) {\tiny $\cdots$};
\node[left] at (-0.8,1) {$w_v^{*}\Big($}; \node[right] at (7.7,3) {$\Big)$};
\node[left] at (-0.8,3) {$w_u\Big($}; \node[right] at (7.7,1) {$\Big)$};
\end{tikzpicture}
\end{gathered},
\end{multline*}
which establishes the claim \eqref{cb}.
 
\end{proof}

\section{Combinatorial formulae}
\label{sec:combin}

In this section we examine some of the combinatorial properties of the \wh polynomials, arising from their definition as partition functions. In Sections \ref{sec:branch} and \ref{sec:skew-1v} they are shown to satisfy a simple branching rule, with factorized coefficients when branching off a single variable. Furthermore, the one-variable skew \wh polynomials have the so-called interlacing property: they vanish unless their two participating partitions interlace. In Section \ref{sec:reduction} we study the one-variable skew \wh polynomials at $s=0$, and find agreement with the standard $q$--Whittaker polynomials.

\subsection{Branching rules}
\label{sec:branch}

\begin{prop}{\rm
Let $m,n$ be two positive integers and fix two positive partitions $\lambda,\mu$ such that $\lambda \supset \mu$. The skew \wh polynomials satisfy the branching rules
\begin{align}
\label{branch3}
\mathbb{F}_{\lambda / \mu}(x_1,\dots,x_{m+n})
&=
\sum_{\nu}
\mathbb{F}_{\nu / \mu}(x_1,\dots,x_m)
\mathbb{F}_{\lambda / \nu}(x_{m+1},\dots,x_{m+n}),
\\
\label{branch2}
\mathbb{F}^{*}_{\lambda / \mu}(x_1,\dots,x_{m+n})
&=
\sum_{\nu}
\mathbb{F}^{*}_{\nu / \mu}(x_1,\dots,x_m)
\mathbb{F}^{*}_{\lambda / \nu}(x_{m+1},\dots,x_{m+n}),
\end{align} 
where both summations are taken over all partitions $\nu$ such that $\lambda \supset \nu \supset \mu$.
}
\end{prop}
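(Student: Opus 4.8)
The plan is to read both branching rules off the algebraic expressions of Proposition \ref{prop:wh_exp} by inserting a resolution of the identity; on the lattice side this amounts to cutting the diagram of the left panel of Figure \ref{fig:skew_wh} along the horizontal line that separates the first $m$ rows from the last $n$ and summing over the configuration carried by the cut. Since the map $\nu \mapsto \nu'$ permutes the positive partitions and $\{\kett{\lambda}\}$ is the standard basis \eqref{pos_partition} of $\wt{\mathbb{V}}$, the family $\{\kett{\nu'}\}$ is again an orthonormal basis, so $\mathbbm{1}_{\wt{\mathbb{V}}} = \sum_{\nu} \kett{\nu'}\bbra{\nu'}$ with the sum running over all positive partitions $\nu$.

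For \eqref{branch3} I would start from $\mathbb{F}_{\lambda/\mu}(x_1,\dots,x_{m+n}) = \bbra{\mu'}\mathbb{C}(x_1)\cdots\mathbb{C}(x_{m+n})\kett{\lambda'}$, which is \eqref{skew-s-wh}, insert the resolution of the identity between $\mathbb{C}(x_m)$ and $\mathbb{C}(x_{m+1})$, and recognize the two resulting factors via \eqref{skew-s-wh} again:
\begin{multline*}
\mathbb{F}_{\lambda/\mu}(x_1,\dots,x_{m+n})
=\sum_{\nu}\bbra{\mu'}\mathbb{C}(x_1)\cdots\mathbb{C}(x_m)\kett{\nu'}\;\bbra{\nu'}\mathbb{C}(x_{m+1})\cdots\mathbb{C}(x_{m+n})\kett{\lambda'}\\
=\sum_{\nu}\mathbb{F}_{\nu/\mu}(x_1,\dots,x_m)\,\mathbb{F}_{\lambda/\nu}(x_{m+1},\dots,x_{m+n}).
\end{multline*}
To cut the sum down to $\lambda \supset \nu \supset \mu$ I would invoke the vanishing property $\mathbb{F}_{\alpha/\beta}\equiv 0$ unless $\alpha\supset\beta$, which follows from the lattice definition: in Figure \ref{fig:skew_wh} the horizontal-edge occupation numbers are nonnegative and the fused weights \eqref{fused_wt} conserve paths ($i+j=k+\ell$), so for each column index $i\geq 1$ the tail sum $\sum_{j\geq i}(\text{column-}j\ \text{occupation})$ is nondecreasing as one moves up the lattice; telescoping identifies this tail sum with $\beta'_i$ along the bottom and $\alpha'_i$ along the top, whence $\alpha'\geq\beta'$ coordinatewise, i.e. $\alpha\supset\beta$. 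The same observation shows that $\mathbb{C}(x)$ maps each basis vector $\kett{\lambda'}$ to a finite linear combination of basis vectors, so every expression above is an honest finite sum and no convergence issue arises from the insertion of $\mathbbm{1}$.

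For \eqref{branch2} one could rerun the argument verbatim using the operators $\ast{\mathbb{B}}(x_i)$ and formula \eqref{dual-skew-s-wh}, but it is quicker to deduce it from \eqref{branch3}: using $\mathbb{F}^{*}_{\alpha/\beta}=(\tilde{\c}_{\alpha'}/\tilde{\c}_{\beta'})\,\mathbb{F}_{\alpha/\beta}$ together with $\tilde{\c}_{\lambda'}/\tilde{\c}_{\mu'}=(\tilde{\c}_{\lambda'}/\tilde{\c}_{\nu'})(\tilde{\c}_{\nu'}/\tilde{\c}_{\mu'})$, multiply \eqref{branch3} through by $\tilde{\c}_{\lambda'}/\tilde{\c}_{\mu'}$ and distribute the factor onto the two halves of each summand. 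I expect the only point needing genuine care is the vanishing/containment lemma used to truncate the sum — equivalently, checking that the cut configuration records a bona fide positive partition $\nu$ squeezed between $\mu$ and $\lambda$ — while everything else is a formal manipulation of the operator identities already in hand.
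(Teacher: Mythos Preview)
Your proof is correct and follows essentially the same approach as the paper: insert the resolution of the identity $\sum_{\nu}\kett{\nu'}\bbra{\nu'}$ between $\mathbb{C}(x_m)$ and $\mathbb{C}(x_{m+1})$ in \eqref{skew-s-wh}, and deduce \eqref{branch2} from \eqref{branch3} by multiplying through by $\tilde{\c}_{\lambda'}/\tilde{\c}_{\mu'}$. You supply more detail than the paper does on why the sum truncates to $\lambda\supset\nu\supset\mu$ and why finiteness holds, but the core argument is identical.
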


\begin{proof}
This is an easy consequence of the algebraic expressions for the \wh polynomials. We start from \eqref{skew-s-wh} in the case of $m+n$ variables $(x_1,\dots,x_{m+n})$, inserting the identity $\sum_{\nu} \kett{\nu'} \bbra{\nu'}$ after the $m^{\rm th}$ $\mathbb{C}$-operator. We find that
\begin{align*}
\mathbb{F}_{\lambda/\mu}(x_1,\dots,x_{m+n})
=
\sum_{\nu}
\bbra{\mu'} \mathbb{C}(x_1) \dots \mathbb{C}(x_m) \kett{\nu'}
\bbra{\nu'} \mathbb{C}(x_{m+1}) \dots \mathbb{C}(x_{m+n}) \kett{\lambda'},
\end{align*}
and \eqref{branch3} follows immediately by reapplying \eqref{skew-s-wh}.

Note that the second branching rule \eqref{branch2} follows trivially from the first, by multiplying through by $\tilde{\c}_{\lambda'}(q,s)/ \tilde{\c}_{\mu'}(q,s)$.
\end{proof}

\subsection{One-variable skew \wh polynomials}
\label{sec:skew-1v}

\begin{thm}{\rm 
The one-variable skew \wh polynomials are given explicitly by
\begin{align}
\label{one-var1}
\mathbb{F}_{\mu/\nu}(x)
=
\left\{
\begin{array}{ll}
x^{|\mu|-|\nu|}
\displaystyle{
\prod_{i \geq 1}
\frac{(-s/x;q)_{\mu_i-\nu_i} (-sx;q)_{\nu_i-\mu_{i+1}} (q;q)_{\mu_i-\mu_{i+1}}}
{(q;q)_{\mu_i-\nu_i} (q;q)_{\nu_i-\mu_{i+1}} (s^2;q)_{\mu_i-\mu_{i+1}}}
},
\qquad
&
\mu \succ \nu,
\\ \\
0,
\qquad
&
{\rm otherwise,}
\end{array}
\right.
\end{align}

\begin{align}
\label{one-var2}
\mathbb{F}^{*}_{\mu/\nu}(x)
=
\left\{
\begin{array}{ll}
x^{|\mu|-|\nu|}
\displaystyle{
\prod_{i \geq 1}
\frac{(-s/x;q)_{\mu_i-\nu_i} (-sx;q)_{\nu_i-\mu_{i+1}} (q;q)_{\nu_i-\nu_{i+1}}}
{(q;q)_{\mu_i-\nu_i} (q;q)_{\nu_i-\mu_{i+1}} (s^2;q)_{\nu_i-\nu_{i+1}}}
},
\qquad
&
\mu \succ \nu,
\\ \\
0,
\qquad
&
{\rm otherwise.}
\end{array}
\right.
\end{align}

}
\end{thm}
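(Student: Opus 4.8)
The plan is to read both formulas straight off the lattice (equivalently, algebraic) definition of the one-variable skew polynomial. By Proposition~\ref{prop:wh_exp} with $n=1$ we have $\mathbb{F}_{\mu/\nu}(x)=\bbra{\nu'}\mathbb{C}(x)\kett{\mu'}$, and by the definition of $\mathbb{C}(x)$ in \eqref{wh_ops} this is the partition function of a single row of the vertices \eqref{fused_wt}: the column labelled $j\geq 1$ carries top state $m_j(\mu')=\mu_j-\mu_{j+1}$ and bottom state $m_j(\nu')=\nu_j-\nu_{j+1}$ (using the identity $m_i(\lambda')=\lambda_i-\lambda_{i+1}$, which is $m_i(\kappa)=\kappa'_i-\kappa'_{i+1}$ applied to $\kappa=\lambda'$), its rightmost horizontal edge is empty, and its leftmost horizontal edge — the right output of the deleted $0^{\rm th}$ column — is summed over with the weight $x^i(-s/x;q)_i/(q;q)_i$ coming from the $0^{\rm th}$-column weight \eqref{0th-col}.

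The first step is to solve for the horizontal edge states. Write $e_j$ for the edge between columns $j-1$ and $j$, so $e_1=i$ and $e_j\to 0$ as $j\to\infty$. Conservation of paths at column $j$ reads $m_j(\nu')+e_j=m_j(\mu')+e_{j+1}$, i.e. $e_{j+1}=e_j+(\nu_j-\nu_{j+1})-(\mu_j-\mu_{j+1})$; telescoping from $j=1$ and imposing $e_j\to 0$ forces $e_1=\mu_1-\nu_1$, and hence $e_j=\mu_j-\nu_j$ for all $j\geq 1$. In particular the sum over $i$ in the definition of $\mathbb{C}(x)$ collapses to the single term $i=\mu_1-\nu_1$, so the partition function has a unique admissible configuration, all but finitely many of whose columns are empty with weight $1$. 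Non-negativity of the edge states, $e_j=\mu_j-\nu_j\geq 0$, together with the support factor $\bm{1}_{i\geq\ell}$ of \eqref{fused_wt} at each column, which here says $\nu_j-\nu_{j+1}\geq e_{j+1}=\mu_{j+1}-\nu_{j+1}$, give exactly $\mu_j\geq\nu_j\geq\mu_{j+1}$ for all $j$, i.e. $\mu\succ\nu$; when these inequalities fail the partition function vanishes, which is the ``otherwise'' branch of \eqref{one-var1}. (The length conventions built into $\succ$ — that $0\leq\ell(\mu)-\ell(\nu)\leq1$ and that the last inequality is omitted when the lengths coincide — come out automatically once trailing zero parts are allowed, as throughout the paper.)

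It then remains to multiply the weights. For $\mu\succ\nu$, substitute into \eqref{fused_wt} at column $j$ the values $i=\nu_j-\nu_{j+1}$, left state $e_j=\mu_j-\nu_j$, top state $k=\mu_j-\mu_{j+1}$, right state $e_{j+1}=\mu_{j+1}-\nu_{j+1}$ (noting $i-e_{j+1}=\nu_j-\mu_{j+1}$), take the finite product over $j\geq 1$, and include the $0^{\rm th}$-column factor $x^{\mu_1-\nu_1}(-s/x;q)_{\mu_1-\nu_1}/(q;q)_{\mu_1-\nu_1}$. Collecting the powers of $x$ yields $x^{|\mu|-|\nu|}$; the $0^{\rm th}$-column factor together with the column-$j$ factors indexed by $\mu_{j+1}-\nu_{j+1}$ assemble into $\prod_{i\geq 1}(-s/x;q)_{\mu_i-\nu_i}/(q;q)_{\mu_i-\nu_i}$; and what is left is $\prod_{i\geq1}(-sx;q)_{\nu_i-\mu_{i+1}}(q;q)_{\mu_i-\mu_{i+1}}/\big[(q;q)_{\nu_i-\mu_{i+1}}(s^2;q)_{\mu_i-\mu_{i+1}}\big]$, which is exactly \eqref{one-var1}. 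Finally, \eqref{one-var2} follows by multiplying \eqref{one-var1} by $\tilde{\c}_{\mu'}(q,s)/\tilde{\c}_{\nu'}(q,s)=\prod_{i\geq1}\tfrac{(s^2;q)_{\mu_i-\mu_{i+1}}(q;q)_{\nu_i-\nu_{i+1}}}{(q;q)_{\mu_i-\mu_{i+1}}(s^2;q)_{\nu_i-\nu_{i+1}}}$, as dictated by the definition of $\mathbb{F}^{*}$: the $(q;q)_{\mu_i-\mu_{i+1}}/(s^2;q)_{\mu_i-\mu_{i+1}}$ factors cancel and are replaced by $(q;q)_{\nu_i-\nu_{i+1}}/(s^2;q)_{\nu_i-\nu_{i+1}}$. (Alternatively, \eqref{one-var2} can be obtained by the same lattice computation applied directly to the dual weights \eqref{fused_dual} and the right panel of Figure~\ref{fig:skew_wh}.)

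The only real obstacle is the bookkeeping: keeping the conjugate-partition multiplicities straight, and correctly matching the index shift between the $0^{\rm th}$ column (which supplies the Pochhammer factor labelled $\mu_1-\nu_1$) and the right edges of the columns $j\geq1$ (which supply the factors labelled $\mu_{j+1}-\nu_{j+1}$). A single off-by-one there would corrupt the $(-s/x;q)_\bullet$ product; everything else is a direct substitution into \eqref{fused_wt} followed by taking a product.
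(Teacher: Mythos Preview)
Your proof is correct and follows essentially the same approach as the paper: both start from $\mathbb{F}_{\mu/\nu}(x)=\bbra{\nu'}\mathbb{C}(x)\kett{\mu'}$, determine the unique admissible configuration of the single-row lattice by conservation (you telescope the edge equations from the left, the paper inducts column-by-column from the right), read off the Boltzmann weights \eqref{fused_wt}, and obtain \eqref{one-var2} by multiplying through by $\tilde{\c}_{\mu'}(q,s)/\tilde{\c}_{\nu'}(q,s)$. The bookkeeping you flag --- matching the $0^{\rm th}$-column factor with the $\mu_{j+1}-\nu_{j+1}$ factors from columns $j\geq 1$ --- is handled correctly.
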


\begin{proof}
We begin by writing down $\mathbb{F}_{\mu/\nu}(x)$ as a sum of single-row partition functions:
\begin{align}
\label{branch1}
\mathbb{F}_{\mu/\nu}(x)
=
\bbra{\nu'}
\mathbb{C}(x)
\kett{\mu'}
=
\sum_{j=0}^{\infty}
x^{j} \frac{(-s/x;q)_{j}}{(q;q)_{j}}
\times
W_x
\left(
\begin{gathered}
\begin{tikzpicture}[scale=0.5,baseline=(current bounding box.center)]
\draw[lgray,line width=5pt] (0,0) -- (7,0);
\foreach\x in {1,...,6} {\draw[lgray,line width=5pt] (\x,-1) -- (\x,1);}
\node[left] at (0.2,0) {\tiny $j$};\node[right] at (6.8,0) {\tiny $0$};
\node[below] at (1,-0.8) {\tiny $n'_1$};\node[above] at (1,0.8) {\tiny $m'_1$};
\node[below] at (6,-0.8) {\tiny $n'_M$};\node[above] at (6,0.8) {\tiny $m'_M$};
\end{tikzpicture}
\end{gathered}
\right)
\end{align}
where we use the abbreviations $m'_i = m_i(\mu') = \mu_i - \mu_{i+1}$ and $n'_i = m_i(\nu') = \nu_i - \nu_{i+1}$, and $M = \max\{\ell(\mu),\ell(\nu)\}$ (all vertices beyond the $M^{\rm th}$ column will have weight equal to $1$, so we may suppress them). We now read off the Boltzmann weights one by one. We start from the rightmost vertex in the product. Since $m'_M=\mu_M$ and $n'_M=\nu_M$ by the very definition of $M$, the weight of this vertex is given by
\begin{align*}
W_x
\left(
\begin{gathered}
\begin{tikzpicture}[scale=0.4,baseline=(current bounding box.center)]
\draw[lgray,line width=5pt] (-1,0) -- (1,0);
\draw[lgray,line width=5pt] (0,-1) -- (0,1);
\node[left] at (-0.8,0) {\tiny $j$};\node[right] at (0.8,0) {\tiny $0$};
\node[below] at (0,-0.8) {\tiny $\nu_M$};\node[above] at (0,0.8) {\tiny $\mu_M$};
\end{tikzpicture}
\end{gathered}
\right)
=
\left( \bm{1}_{j = \mu_M-\nu_M} \right)
\frac{ (-sx;q)_{\nu_M} (q;q)_{\mu_M}}
{(q;q)_{\nu_M} (s^2;q)_{\mu_M}},
\end{align*}
constraining the number of paths passing through the left edge to $\mu_M-\nu_M$ (and in particular, vanishing if $\nu_M > \mu_M$). Now observe that the vertex in the $i^{\rm th}$ column has a Boltzmann weight of the form 
\begin{multline}
\label{branch_induct}
W_x
\left(
\begin{gathered}
\begin{tikzpicture}[scale=0.4,baseline=(current bounding box.center)]
\draw[lgray,line width=5pt] (-1,0) -- (1,0);
\draw[lgray,line width=5pt] (0,-1) -- (0,1);
\node[left] at (-0.8,0) {\tiny $j$};\node[right] at (0.8,0) {\tiny $\mu_{i+1}-\nu_{i+1}$};
\node[below,text centered] at (0,-0.8) {\tiny $\nu_{i}-\nu_{i+1}$};
\node[above] at (0,0.8) {\tiny $\mu_{i}-\mu_{i+1}$};
\end{tikzpicture}
\end{gathered}
\right)
=
\\
\left( \bm{1}_{j = \mu_i-\nu_i} \right)
\left( \bm{1}_{\nu_i \geq \mu_{i+1}} \right)
x^{\mu_{i+1}-\nu_{i+1}}
\frac{(-s/x;q)_{\mu_{i+1}-\nu_{i+1}} (-sx;q)_{\nu_i-\mu_{i+1}} (q;q)_{\mu_i-\mu_{i+1}}}
{(q;q)_{\mu_{i+1}-\nu_{i+1}} (q;q)_{\nu_i-\mu_{i+1}} (s^2;q)_{\mu_i-\mu_{i+1}}}.
\end{multline}
Indeed, this clearly holds for $i=M$, and since $j$ is constrained to the value $\mu_i-\nu_i$, we conclude inductively that it holds for all $1 \leq i \leq M$. The indicator functions present in \eqref{branch_induct} ensure that $\mu_i \geq \nu_i \geq \mu_{i+1}$ for all $1 \leq i \leq M$; the total contribution of the $1^{\rm st}$ to $M^{\rm th}$ vertices is therefore
\begin{multline}
\label{1toM}
W_x
\left(
\begin{gathered}
\begin{tikzpicture}[scale=0.5,baseline=(current bounding box.center)]
\draw[lgray,line width=5pt] (0,0) -- (7,0);
\foreach\x in {1,...,6} {\draw[lgray,line width=5pt] (\x,-1) -- (\x,1);}
\node[left] at (0.2,0) {\tiny $\mu_1-\nu_1$};\node[right] at (6.8,0) {\tiny $0$};
\node[below] at (1,-0.8) {\tiny $\nu_1-\nu_2$};\node[above] at (1,0.8) {\tiny $\mu_1-\mu_2$};
\node[below] at (6,-0.8) {\tiny $\nu_M$};\node[above] at (6,0.8) {\tiny $\mu_M$};
\end{tikzpicture}
\end{gathered}
\right)
=
\\
\prod_{i=2}^{M}
\left(
x^{\mu_i-\nu_i}
\frac{(-s/x;q)_{\mu_i-\nu_i}}{(q;q)_{\mu_i-\nu_i}}
\right)
\prod_{i=1}^{M}
\frac{(-sx;q)_{\nu_i-\mu_{i+1}} (q;q)_{\mu_i-\mu_{i+1}}}
{(q;q)_{\nu_i-\mu_{i+1}} (s^2;q)_{\mu_i-\mu_{i+1}}}
\end{multline}
provided that $\mu \succ \nu$, vanishing otherwise. Combining this with the prefactor in \eqref{branch1}, and noting that the summation over $j$ is constrained to the value $j = \mu_1-\nu_1$, we conclude that
\begin{align*}
\mathbb{F}_{\mu/\nu}(x)
=
\prod_{i=1}^{M}
\left(
x^{\mu_i-\nu_i}
\frac{(-s/x;q)_{\mu_i-\nu_i} (-sx;q)_{\nu_i-\mu_{i+1}} (q;q)_{\mu_i-\mu_{i+1}}}
{(q;q)_{\mu_i-\nu_i} (q;q)_{\nu_i-\mu_{i+1}} (s^2;q)_{\mu_i-\mu_{i+1}}}
\right),
\quad
\mu \succ \nu,
\end{align*}
completing the proof of \eqref{one-var1}.

The second formula \eqref{one-var2} follows immediately from \eqref{one-var1} by multiplying it by 
\begin{align*}
\frac{\tilde{\c}_{\mu'}(q,s)}{\tilde{\c}_{\nu'}(q,s)} 
=
\prod_{i \geq 1}
\left(
\frac{(s^2;q)_{\mu_i-\mu_{i+1}}}{(q;q)_{\mu_i-\mu_{i+1}}}
\right)
\left(
\frac{(q;q)_{\nu_i-\nu_{i+1}}}{(s^2;q)_{\nu_i-\nu_{i+1}}}
\right).
\end{align*}

\end{proof}

\begin{cor}{\rm
Let $\mu$ be a positive partition of length $\ell$. Then $\mathbb{F}_{\lambda / \mu}(x_1,\dots,x_m)$ vanishes if the length of $\lambda$ exceeds $\ell+m$.
}
\end{cor}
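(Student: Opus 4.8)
The plan is to reduce to the one-variable case via the branching rule \eqref{branch3} and then invoke the interlacing constraint that is built into the explicit formula \eqref{one-var1}. First I would iterate \eqref{branch3} (taking $n=1$ at each step, peeling off one variable at a time) to obtain the expansion
\[
\mathbb{F}_{\lambda/\mu}(x_1,\dots,x_m)
=
\sum_{\mu=\nu^{(0)}\subset\nu^{(1)}\subset\cdots\subset\nu^{(m)}=\lambda}
\ \prod_{i=1}^{m}
\mathbb{F}_{\nu^{(i)}/\nu^{(i-1)}}(x_i),
\]
where the sum ranges over all chains of positive partitions with the indicated containments, and each factor is a genuine one-variable skew \wh polynomial (the intermediate partitions in the iterated branching rule are automatically positive, so \eqref{one-var1} applies to each of them).

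Next I would observe, using \eqref{one-var1}, that the factor $\mathbb{F}_{\nu^{(i)}/\nu^{(i-1)}}(x_i)$ vanishes unless $\nu^{(i)}\succ\nu^{(i-1)}$; and, by the definition of the interlacing relation $\succ$ recalled in the Notation subsection, a nonzero factor forces in particular $0\leq \ell(\nu^{(i)})-\ell(\nu^{(i-1)})\leq 1$. Hence for any chain contributing a nonzero term one has
\[
\ell(\lambda)=\ell(\nu^{(m)})\leq \ell(\nu^{(0)})+m=\ell(\mu)+m=\ell+m.
\]
Therefore, if $\ell(\lambda)>\ell+m$, every term in the expansion vanishes, and so $\mathbb{F}_{\lambda/\mu}(x_1,\dots,x_m)=0$, which is the claim.

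I do not anticipate a serious obstacle here; the only point requiring a little care is that the support condition $\mu\succ\nu$ appearing in \eqref{one-var1} genuinely imposes the length bound $\ell(\mu)-\ell(\nu)\in\{0,1\}$, rather than merely the containment bound, and this is exactly what the definition of $\succ$ provides. An essentially equivalent alternative would be to argue directly from the algebraic formula \eqref{skew-s-wh}, noting that a single operator $\mathbb{C}(x_i)$ can increase the length of the underlying partition by at most one; but the bookkeeping is cleanest through the interlacing property, so I would present the branching-rule argument above.
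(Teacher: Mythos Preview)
Your argument is correct and is essentially identical to the paper's own proof: iterate the branching rule down to one-variable factors, use \eqref{one-var1} to impose the interlacing constraint $\nu^{(i)}\succ\nu^{(i-1)}$, and conclude that each step increases the length by at most one. The only cosmetic difference is that the paper writes the sum directly over interlacing chains $\nu^{(0)}\prec\cdots\prec\nu^{(m)}$ rather than over containment chains with the interlacing imposed afterwards.
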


\begin{proof}
By $m$ iterations of the branching rule, we have
\begin{align}
\label{GT}
\mathbb{F}_{\lambda / \mu}(x_1,\dots,x_m)
=
\sum_{\nu^{(0)} \prec \nu^{(1)} \prec \cdots \prec \nu^{(m)}}
\prod_{i=1}^{m}
\mathbb{F}_{\nu^{(i)}/\nu^{(i-1)}}(x_i),
\end{align}
where we define $\nu^{(0)} = \mu$ and $\nu^{(m)} = \lambda$, and sum over the remaining $m-1$ partitions. Because each partition $\nu^{(i)}$ interlaces $\nu^{(i-1)}$, its length can be at most one greater than its predecessor. It follows that $\ell(\nu^{(m)}) \equiv \ell(\lambda)$ is maximally $\ell(\nu^{(0)})+m \equiv \ell(\mu)+m$.
\end{proof}

\begin{cor}{\rm
The skew \wh polynomials satisfy the stability relation
\begin{align*}
\mathbb{F}_{\lambda / \mu}(x_1,\dots,x_{m-1},-s)
=
\mathbb{F}_{\lambda / \mu}(x_1,\dots,x_{m-1}),
\end{align*}
for all partitions $\lambda,\mu$.
}
\end{cor}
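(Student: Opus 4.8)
The plan is to reduce the statement to the branching rule \eqref{branch3} together with the explicit one-variable formula \eqref{one-var1}. Since $\mathbb{F}_{\lambda/\mu}$ is defined (equivalently, non-zero) only when $\lambda\supset\mu$, we may assume $\lambda\supset\mu$, so that \eqref{branch3} applies with the $m$ variables split into $m-1$ and $1$:
\begin{align*}
\mathbb{F}_{\lambda/\mu}(x_1,\dots,x_{m-1},-s)
=
\sum_{\lambda\supset\nu\supset\mu}
\mathbb{F}_{\nu/\mu}(x_1,\dots,x_{m-1})\,
\mathbb{F}_{\lambda/\nu}(-s).
\end{align*}
It therefore suffices to prove that $\mathbb{F}_{\lambda/\nu}(-s)=\delta_{\lambda,\nu}$, after which the sum collapses to $\mathbb{F}_{\lambda/\mu}(x_1,\dots,x_{m-1})$, which is the claim. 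We are free to substitute $x_m=-s$ into \eqref{branch3} because that rule is a polynomial identity in the variables.

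To establish $\mathbb{F}_{\lambda/\nu}(-s)=\delta_{\lambda,\nu}$, I would substitute $x=-s$ into \eqref{one-var1}. Then $-s/x=1$, so the prefactor $(-s/x;q)_{\lambda_i-\nu_i}$ becomes $(1;q)_{\lambda_i-\nu_i}$, which equals $0$ whenever $\lambda_i-\nu_i\geq 1$ and equals $1$ when $\lambda_i=\nu_i$. Hence the whole product vanishes unless $\lambda_i=\nu_i$ for every $i$, i.e.\ unless $\lambda=\nu$. In the surviving case $\lambda=\nu$, formula \eqref{one-var1} reduces to $\prod_{i\geq 1}(-sx;q)_{\lambda_i-\lambda_{i+1}}/(s^2;q)_{\lambda_i-\lambda_{i+1}}$, and at $x=-s$ one has $-sx=s^2$, so every factor equals $1$ and $\mathbb{F}_{\lambda/\lambda}(-s)=1$. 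This gives $\mathbb{F}_{\lambda/\nu}(-s)=\delta_{\lambda,\nu}$, completing the argument for $\mathbb{F}$. For $\mathbb{F}^{*}$ the proof is identical: one may either repeat it verbatim using \eqref{one-var2}, where the prefactor $(-s/x;q)_{\mu_i-\nu_i}$ plays exactly the same role, or simply observe that $\mathbb{F}^{*}_{\lambda/\mu}$ differs from $\mathbb{F}_{\lambda/\mu}$ only by a ratio of $q$-Pochhammer symbols independent of the $x_i$.

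There is essentially no obstacle; the only points meriting a word of care are the degenerate case $m=1$, where the right-hand side $\mathbb{F}_{\lambda/\mu}()$ is to be read as the empty-variable partition function $\bbra{\mu'}\kett{\lambda'}=\delta_{\lambda,\mu}$ coming from \eqref{skew-s-wh}, consistent with the $x=-s$ evaluation above, and the trivial check that $\lambda=\nu$ indeed satisfies $\lambda\succ\nu$ so that \eqref{one-var1} is the applicable branch. Conceptually, at $s=0$ this substitution degenerates to the standard fact that $q$--Whittaker polynomials stabilize upon appending a zero variable.
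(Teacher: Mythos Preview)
Your proof is correct and follows essentially the same approach as the paper: both arguments use the branching rule to peel off the last variable and then invoke the one-variable formula \eqref{one-var1} to show $\mathbb{F}_{\lambda/\nu}(-s)=\delta_{\lambda,\nu}$ via the vanishing of $(1;q)_{\lambda_i-\nu_i}$. The only cosmetic difference is that the paper first fully iterates the branching rule into a Gelfand--Tsetlin sum \eqref{GT} before isolating the $x_m$-factor, whereas you split $m-1$ and $1$ directly; your version is marginally more economical but otherwise identical in substance.
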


\begin{proof}
Isolating the dependence on $x_m$ in \eqref{GT} and setting $x_m=-s$, we have
\begin{align*}
\mathbb{F}_{\lambda / \mu}(x_1,\dots,x_{m-1},-s)
=
\sum_{\nu^{(0)} \prec \nu^{(1)} \prec \cdots \prec \nu^{(m)}}
\mathbb{F}_{\lambda/\nu^{(m-1)}}(-s)
\prod_{i=1}^{m-1}
\mathbb{F}_{\nu^{(i)}/\nu^{(i-1)}}(x_i),
\end{align*}
where we have defined $\nu^{(0)} = \mu$ and $\nu^{(m)}= \lambda$, as before. Examining equation \eqref{one-var1} for the one-variable skew \wh polynomials, we note that $\mathbb{F}_{\lambda/\nu}(x)$ contains the factor $\prod_{i \geq 1}(-s/x;q)_{\lambda_i-\nu_i}$, which vanishes when $x=-s$ if $\lambda_i > \nu_i$ for any $i$. Furthermore, it is clear that $\mathbb{F}_{\lambda/\nu}(-s)=1$ when $\lambda=\nu$. We conclude that $\mathbb{F}_{\lambda/\nu}(-s) = {\bm 1}_{\lambda = \nu}$, and therefore
\begin{align*}
\mathbb{F}_{\lambda / \mu}(x_1,\dots,x_{m-1},-s)
=
\sum_{\nu^{(0)} \prec \nu^{(1)} \prec \cdots \prec \nu^{(m-1)}}
\prod_{i=1}^{m-1}
\mathbb{F}_{\nu^{(i)}/\nu^{(i-1)}}(x_i),
\end{align*}
where the restriction $\nu^{(m-1)}= \lambda$ is now assumed. The final expression then matches \eqref{GT} in the case of $m-1$ variables.

\end{proof}

\subsection{Reduction to $q$--Whittaker polynomials}
\label{sec:reduction}

At $s=0$, the \wh polynomials reduce to ordinary $q$--Whittaker polynomials. This can be easily deduced from the explicit form \eqref{one-var1}, \eqref{one-var2} of the one-variable skew \wh polynomials:
\begin{align*}
\mathbb{F}_{\mu/\nu}(x)
\Big|_{s=0}
=
\left\{
\begin{array}{ll}
x^{|\mu|-|\nu|}
\displaystyle{
\prod_{i \geq 1}
\frac{(q;q)_{\mu_i-\mu_{i+1}}}
{(q;q)_{\mu_i-\nu_i} (q;q)_{\nu_i-\mu_{i+1}}}
},
\qquad
&
\mu \succ \nu,
\\ \\
0,
\qquad
&
{\rm otherwise,}
\end{array}
\right.
\end{align*}

\begin{align*}
\mathbb{F}^{*}_{\mu/\nu}(x)
\Big|_{s=0}
=
\left\{
\begin{array}{ll}
x^{|\mu|-|\nu|}
\displaystyle{
\prod_{i \geq 1}
\frac{(q;q)_{\nu_i-\nu_{i+1}}}
{(q;q)_{\mu_i-\nu_i} (q;q)_{\nu_i-\mu_{i+1}}}
},
\qquad
&
\mu \succ \nu,
\\ \\
0,
\qquad
&
{\rm otherwise,}
\end{array}
\right.
\end{align*}
which matches precisely with the one-variable skew Macdonald polynomials $P_{\mu/\nu}(x;q,t)$ and $Q_{\mu/\nu}(x;q,t)$ at $t=0$ (see Example 2 (b) in Section 6, Chapter VI of \cite{Macdonald} and set $t=0$). The partition function \eqref{skew-s-wh} thus reduces precisely to Korff's lattice model construction of the $q$--Whittaker polynomials \cite{Korff}, at $s=0$.

\section{Cauchy identities and Pieri rules}
\label{sec:cauchy_pieri}

In this section we derive a series of identities for the \wh polynomials. The first of these is a skew Cauchy identity, which is proved using the exchange relation \eqref{CB} for the fused row operators (in this way, the proof directly mirrors that of \eqref{HL_skew_C}, the skew Cauchy identity for the \hl functions). It reduces to a non-skew Cauchy identity for trivial skew Young diagrams (with an empty bottom partition), and that identity, in turn, can be considered as a multi parameter generalization of the $q$--Gauss summation theorem.

The second is a skew dual Cauchy identity, involving both a \wh polynomial and a stable \hl function, proved using the exchange relation \eqref{cB}. The appearance of a \wh polynomial and a \hl function in the same summation identity is suggestive of the existence of an involution which maps between the two families, much as $q$--Whittaker and Hall--Littlewood polynomials are related under the Macdonald involution \cite{Macdonald}. It would be very interesting to find such an involution, since it would provide some hope for the unification of \wh polynomials and \hl functions as specializations of a single ``spin Macdonald'' function.

Finally, we conclude with Pieri rules for the \wh polynomials. These are derived as simple corollaries of the skew Cauchy and dual skew Cauchy identities.

\subsection{Cauchy identity for \wh polynomials}

\begin{thm}{\rm
Fix two positive integers $m$ and $n$, and let $\mu$ and $\nu$ be two partitions. The \wh polynomials satisfy the following summation identity (assuming all parameters are in the unit disc):
\begin{multline}
\label{sk_wh_cauchy}
\sum_{\lambda}
\mathbb{F}_{\lambda/\mu}(x_1,\dots,x_m)
\mathbb{F}^{*}_{\lambda/\nu}(y_1,\dots,y_n)
=
\\
\prod_{i=1}^{m}
\prod_{j=1}^{n}
\left(
\frac{(-sx_i;q)_{\infty} (-sy_j;q)_{\infty}}
{(s^2;q)_{\infty} (x_i y_j;q)_{\infty}}
\right)
\sum_{\kappa}
\mathbb{F}_{\nu/\kappa}(x_1,\dots,x_m)
\mathbb{F}^{*}_{\mu/\kappa}(y_1,\dots,y_n),
\end{multline}
with the left hand sum taken over all partitions $\lambda$ such that $\lambda' \supset \mu'$ and $\lambda' \supset \nu'$, and the right hand sum taken over all partitions $\kappa$ such that $\kappa' \subset \mu'$ and $\kappa' \subset \nu'$.  
}
\end{thm}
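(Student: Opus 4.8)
The plan is to mirror the derivation of the \hl skew Cauchy identity \eqref{HL_skew_C} in Theorem~\ref{thm:hl_cauchy}, with the fused row operators $\mathbb{C}(x)$, $\overline{\mathbb{B}}(y)$ and their exchange relation \eqref{CB} playing the roles of $C(u)$, $\b{D}(v)$ and \eqref{HL_exchange}. Concretely, I would introduce the matrix element
\begin{align*}
\mathcal{E}_{\mu,\nu}(x_1,\dots,x_m;y_1,\dots,y_n)
:=
\bbra{\mu'}\, \mathbb{C}(x_1)\cdots\mathbb{C}(x_m)\, \overline{\mathbb{B}}(y_n)\cdots\overline{\mathbb{B}}(y_1)\, \kett{\nu'}
\end{align*}
on the space $\wt{\mathbb{V}}$, and evaluate it in two different ways.

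First, inserting a complete set of states $\mathbbm{1}=\sum_{\lambda}\kett{\lambda'}\bbra{\lambda'}$ between $\mathbb{C}(x_m)$ and $\overline{\mathbb{B}}(y_n)$ and applying the algebraic formulas \eqref{skew-s-wh} and \eqref{dual-skew-s-wh} of Proposition~\ref{prop:wh_exp} identifies $\mathcal{E}_{\mu,\nu}$ with the left-hand side of \eqref{sk_wh_cauchy}, namely $\sum_{\lambda}\mathbb{F}_{\lambda/\mu}(x_1,\dots,x_m)\mathbb{F}^{*}_{\lambda/\nu}(y_1,\dots,y_n)$. Second, I would use the exchange relation \eqref{CB} to carry each $\mathbb{C}(x_i)$ past each $\overline{\mathbb{B}}(y_j)$; since the $\mathbb{C}$'s mutually commute and the $\overline{\mathbb{B}}$'s mutually commute by \eqref{commute}, the order in which the commutations are performed is immaterial and one collects precisely the scalar $\prod_{i=1}^{m}\prod_{j=1}^{n}\tfrac{(-sx_i;q)_{\infty}(-sy_j;q)_{\infty}}{(s^2;q)_{\infty}(x_iy_j;q)_{\infty}}$. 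Inserting another complete set $\mathbbm{1}=\sum_{\kappa}\kett{\kappa'}\bbra{\kappa'}$ between the last $\overline{\mathbb{B}}(y_1)$ and the first $\mathbb{C}(x_1)$, and again invoking Proposition~\ref{prop:wh_exp}, rewrites $\mathcal{E}_{\mu,\nu}$ as this scalar times $\sum_{\kappa}\mathbb{F}_{\nu/\kappa}(x_1,\dots,x_m)\mathbb{F}^{*}_{\mu/\kappa}(y_1,\dots,y_n)$. Equating the two evaluations yields \eqref{sk_wh_cauchy}. The stated ranges of summation require only a short check: from the branching rule \eqref{GT} together with the one-variable formulas \eqref{one-var1}, \eqref{one-var2}, both $\mathbb{F}_{\lambda/\mu}$ and $\mathbb{F}^{*}_{\lambda/\mu}$ vanish unless $\lambda\supset\mu$, equivalently $\lambda'\supset\mu'$, which forces the left-hand sum onto $\lambda'\supset\mu'$, $\lambda'\supset\nu'$ and the right-hand sum onto $\kappa'\subset\mu'$, $\kappa'\subset\nu'$; in particular the latter is finite.

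I expect the real work to be analytic rather than algebraic. The operators $\mathbb{C}(x)$, $\overline{\mathbb{B}}(y)$ live on the infinite tensor product $\wt{\mathbb{V}}$ and are defined as $L\to\infty$ limits of finite row operators, and the exchange relation \eqref{CB} was established only for $|x|,|y|<1$ with $q,s$ in the unit disc. One must therefore check that $\mathcal{E}_{\mu,\nu}$ converges, that the two insertions of completeness are legitimate, and that \eqref{CB} can be iterated $mn$ times without running into a divergence. I would handle this exactly as in the proofs of Propositions~\ref{prop:wh_exp} and \ref{prop:exchange}: fix the boundary partitions, truncate the column space to $V_1\otimes\cdots\otimes V_L$, carry out the finite argument with the explicit monodromy-matrix entries — where every intermediate sum is finite because the Boltzmann weights \eqref{fused_wt}, \eqref{fused_dual} vanish once an occupation number exceeds the partition data — and then pass to the limit $L\to\infty$, which exists in the assumed parameter regime by the same estimates that define the operators \eqref{wh_ops}. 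Once the skew identity is in place, the non-skew Cauchy identity (and its reading as a multivariate $q$--Gauss summation), the dual Cauchy identities pairing \wh polynomials with stable \hl functions, and the Pieri rules all follow by specialization, as sketched at the start of Section~\ref{sec:cauchy_pieri}.
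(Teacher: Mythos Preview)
Your proposal is correct and follows essentially the same route as the paper: define the expectation value $\mathcal{E}_{\mu,\nu}=\bbra{\mu'}\mathbb{C}(x_1)\cdots\mathbb{C}(x_m)\overline{\mathbb{B}}(y_n)\cdots\overline{\mathbb{B}}(y_1)\kett{\nu'}$, expand it once via \eqref{skew-s-wh}--\eqref{dual-skew-s-wh} to obtain the left-hand side, then commute the operators using $mn$ applications of \eqref{CB} and expand again to obtain the right-hand side. Your additional remarks on the summation ranges and the analytic justification go somewhat beyond what the paper spells out, but are in the same spirit.
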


\begin{proof}
This is essentially a repetition of the steps used to prove Theorem \ref{thm:hl_cauchy}. One starts by writing down the expectation value
\begin{align*}
\mathcal{E}_{\mu,\nu}(x_1,\dots,x_m; y_1,\dots,y_n)
:=
\bbra{\mu'} \mathbb{C}(x_1) \dots \mathbb{C}(x_m)
\mathbb{B}^{*}(y_n) \dots \mathbb{B}^{*}(y_1) \kett{\nu'},
\end{align*}
which, by virtue of \eqref{skew-s-wh}--\eqref{dual-skew-s-wh}, clearly expands as
\begin{align}
\label{1}
\mathcal{E}_{\mu,\nu}(x_1,\dots,x_m; y_1,\dots,y_n)
=
\sum_{\lambda}
\mathbb{F}_{\lambda/\mu}(x_1,\dots,x_m)
\mathbb{F}^{*}_{\lambda/\nu}(y_1,\dots,y_n).
\end{align}
On the other hand, by $mn$ iterations of the exchange relation \eqref{CB}, one has
\begin{align}
\nonumber
&
\mathcal{E}_{\mu,\nu}(x_1,\dots,x_m; y_1,\dots,y_n)
\\
\nonumber
&=
\prod_{i=1}^{m}
\prod_{j=1}^{n}
\left(
\frac{(-sx_i;q)_{\infty} (-sy_j;q)_{\infty}}
{(s^2;q)_{\infty} (x_i y_j;q)_{\infty}}
\right)
\bbra{\mu'} \mathbb{B}^{*}(y_n) \dots \mathbb{B}^{*}(y_1)
\mathbb{C}(x_1) \dots \mathbb{C}(x_m) \kett{\nu'}
\\
\label{2}
&=
\prod_{i=1}^{m}
\prod_{j=1}^{n}
\left(
\frac{(-sx_i;q)_{\infty} (-sy_j;q)_{\infty}}
{(s^2;q)_{\infty} (x_i y_j;q)_{\infty}}
\right)
\sum_{\kappa}
\mathbb{F}_{\nu/\kappa}(x_1,\dots,x_m)
\mathbb{F}^{*}_{\mu/\kappa}(y_1,\dots,y_n),
\end{align}
which completes the proof by matching \eqref{1} and \eqref{2}.
\end{proof}

\begin{cor}{\rm
For any two positive integers $m$ and $n$, the \wh polynomials satisfy the following Cauchy identity (assuming all parameters are in the unit disc):
\begin{align}
\label{wh_cauchy}
\sum_{\lambda}
\mathbb{F}_{\lambda}(x_1,\dots,x_m)
\mathbb{F}^{*}_{\lambda}(y_1,\dots,y_n)
=
\prod_{i=1}^{m}
\prod_{j=1}^{n}
\frac{(-sx_i;q)_{\infty} (-sy_j;q)_{\infty}}
{(s^2;q)_{\infty} (x_i y_j;q)_{\infty}}.
\end{align}
Note that this reproduces the Cauchy identity for ordinary $q$--Whittaker polynomials by setting $s=0$.
}
\end{cor}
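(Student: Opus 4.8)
The plan is to obtain \eqref{wh_cauchy} as the $\mu=\nu=\varnothing$ specialization of the skew Cauchy identity \eqref{sk_wh_cauchy}, which has just been proved. First I would substitute $\mu=\nu=\varnothing$ directly into \eqref{sk_wh_cauchy}. On the left hand side, $\mathbb{F}_{\lambda/\varnothing}=\mathbb{F}_{\lambda}$ and $\mathbb{F}^{*}_{\lambda/\varnothing}=\mathbb{F}^{*}_{\lambda}$ hold by definition, while the constraints $\lambda'\supset\varnothing$ are vacuous; so the left sum runs over all partitions $\lambda$ and equals $\sum_{\lambda}\mathbb{F}_{\lambda}(x_1,\dots,x_m)\,\mathbb{F}^{*}_{\lambda}(y_1,\dots,y_n)$, which is precisely the left side of \eqref{wh_cauchy}. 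The product prefactor on the right hand side of \eqref{sk_wh_cauchy} is already in the form claimed in \eqref{wh_cauchy}, so no manipulation of it is needed.

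Next I would analyze the $\kappa$-sum on the right. It is subject to $\kappa'\subset\mu'=\varnothing$ and $\kappa'\subset\nu'=\varnothing$, which forces $\kappa=\varnothing$; hence the right sum collapses to the single term $\mathbb{F}_{\varnothing/\varnothing}(x_1,\dots,x_m)\,\mathbb{F}^{*}_{\varnothing/\varnothing}(y_1,\dots,y_n)$. It then remains to observe that $\mathbb{F}_{\varnothing}=\mathbb{F}^{*}_{\varnothing}=1$ in any number of variables. This can be read off the explicit one-variable formulae \eqref{one-var1}--\eqref{one-var2}, which give $\mathbb{F}_{\varnothing/\varnothing}(x)=\mathbb{F}^{*}_{\varnothing/\varnothing}(x)=1$ since every $q$-Pochhammer symbol occurring there carries subscript $0$; combining this with the branching rule \eqref{branch3} and its starred analogue \eqref{branch2}, the only interlacing chain from $\varnothing$ to $\varnothing$ is the constant one, so $\mathbb{F}_{\varnothing}(x_1,\dots,x_m)=\prod_{i=1}^{m}\mathbb{F}_{\varnothing/\varnothing}(x_i)=1$, and likewise for $\mathbb{F}^{*}_{\varnothing}$. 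Alternatively one can see this immediately from the lattice definition: with $\lambda'=\mu'=\varnothing$ the $0$th column contributes the already-cancelled factor $\prod_i (-sx_i;q)_\infty/(s^2;q)_\infty$ and every vertex to its right is the empty vertex of weight $1$.

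Putting the two sides together yields \eqref{wh_cauchy}. For the concluding remark, setting $s=0$ makes $(-sx_i;q)_{\infty}=(s^2;q)_{\infty}=1$, so the right side becomes $\prod_{i,j}(x_iy_j;q)_{\infty}^{-1}$; together with the reductions $\mathbb{F}_{\lambda}\to P_{\lambda}(\,\cdot\,;q,0)$ and $\mathbb{F}^{*}_{\lambda}\to Q_{\lambda}(\,\cdot\,;q,0)$ established in Section \ref{sec:reduction}, this recovers the classical $q$--Whittaker Cauchy identity, and rewriting $\mathbb{F}^{*}_{\lambda}=\tilde{\c}_{\lambda'}(q,s)\mathbb{F}_{\lambda}$ puts \eqref{wh_cauchy} in the multivariate $q$-Gauss form quoted in the introduction. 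I do not expect any genuine obstacle here: the statement is a direct corollary, and the only points needing care are the bookkeeping of the conjugate-partition inclusion constraints (ensuring $\kappa$ is indeed forced to be $\varnothing$) and the elementary verification that $\mathbb{F}_{\varnothing}=\mathbb{F}^{*}_{\varnothing}=1$.
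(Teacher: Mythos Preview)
Your proposal is correct and follows the same approach as the paper: specialize \eqref{sk_wh_cauchy} at $\mu=\nu=\varnothing$, observe that the constraint $\kappa'\subset\varnothing$ forces $\kappa=\varnothing$, and use $\mathbb{F}_{\varnothing}=\mathbb{F}^{*}_{\varnothing}=1$. Your additional justification of this last fact via the one-variable formulae and branching (or the lattice picture) is more detailed than the paper's, which simply asserts it.
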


\begin{proof}
This is immediate from equation \eqref{sk_wh_cauchy}, by choosing $\mu = \nu = \varnothing$. Such a choice trivializes the sum on the right hand side of \eqref{sk_wh_cauchy}: the only term remaining in this sum corresponds with $\kappa = \varnothing$. Since $\mathbb{F}_{\varnothing} = \mathbb{F}^{*}_{\varnothing} = 1$, the result follows.
\end{proof}

\subsection{$q$--Gauss summation identity as special case}

Taking the $m=n=1$ case of \eqref{wh_cauchy}, we recover the well-known $q$--Gauss summation identity
\begin{align}
\label{q-gauss}
\sum_{i=0}^{\infty}
\frac{(-s/x;q)_i (-s/y;q)_i}{(s^2;q)_i (q;q)_i}
(xy)^i
=
\frac{(-sx;q)_\infty (-sy;q)_\infty}{(s^2;q)_\infty (xy;q)_\infty}.
\end{align}
To check that the left hand side of \eqref{q-gauss} is indeed recovered from \eqref{wh_cauchy}, we note that at $m=n=1$ the left hand side of \eqref{wh_cauchy} is summed over all partitions $\lambda$ of at most one part. Using the explicit form of the one-variable \wh polynomials \eqref{one-var1} and \eqref{one-var2}, the left hand side of \eqref{wh_cauchy} becomes
\begin{align*}
1+
\sum_{i=1}^{\infty} 
\mathbb{F}_{i}(x)
\mathbb{F}^{*}_{i}(y)
=
1+
\sum_{i=1}^{\infty}
\left( x^i \frac{(-s/x;q)_i}{(s^2;q)_i} \right)
\left( y^i \frac{(-s/y;q)_i}{(q;q)_i} \right),
\end{align*}
and the formula \eqref{q-gauss} follows immediately.

\subsection{Dual Cauchy identity}\label{sec:dual-cauchy}

\begin{thm}{\rm
Fix two positive integers $m$ and $n$, and let $\mu$ and $\nu$ be two partitions. The stable \hl functions and \wh polynomials satisfy the following summation identity\footnote{We could also write this identity as
\begin{align}
\label{alternative}
\sum_{\lambda}
\widetilde{\F}^{*}_{\lambda/\mu}(u_1,\dots,u_m)
\mathbb{F}_{\lambda'/\nu'}(x_1,\dots,x_n)
=
\prod_{i=1}^{m}
\prod_{j=1}^{n}
\left(
\frac{1+u_i x_j}{1-s u_i}
\right)
\sum_{\kappa}
\widetilde{\F}^{*}_{\nu/\kappa}(u_1,\dots,u_m)
\mathbb{F}_{\mu'/\kappa'}(x_1,\dots,x_n),
\end{align}
simply by multiplying \eqref{skew_dual_cauchy} by $\tilde{\c}_{\nu}(q,s) / \tilde{\c}_{\mu}(q,s)$, and redistributing factors within the summations. The algebraic origin of this alternative identity is, of course, the commutation relation \eqref{Cb}.
}:
\begin{multline}
\label{skew_dual_cauchy}
\sum_{\lambda}
\widetilde{\F}_{\lambda/\mu}(u_1,\dots,u_m)
\mathbb{F}^{*}_{\lambda'/\nu'}(x_1,\dots,x_n)
=
\\
\prod_{i=1}^{m}
\prod_{j=1}^{n}
\left(
\frac{1+u_i x_j}{1-s u_i}
\right)
\sum_{\kappa}
\widetilde{\F}_{\nu/\kappa}(u_1,\dots,u_m)
\mathbb{F}^{*}_{\mu'/\kappa'}(x_1,\dots,x_n).
\end{multline}
}
\end{thm}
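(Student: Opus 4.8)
The plan is to replay the proof of Theorem~\ref{thm:hl_cauchy} almost verbatim, with the \hl exchange relation \eqref{HL_exchange} replaced by the mixed exchange relation \eqref{cB} between the stable \hl operators $\widetilde{C}(u)$ and the dual \wh operators $\overline{\mathbb{B}}(x)$, both of which act on $\wt{\mathbb{V}}$. Using the skew analogue of \eqref{def_stable}, namely $\widetilde{\F}_{\lambda/\mu}(u_1,\dots,u_m) = \bbra{\mu}\widetilde{C}(u_1)\cdots\widetilde{C}(u_m)\kett{\lambda}$, and \eqref{dual-skew-s-wh} rewritten as $\mathbb{F}^{*}_{\lambda'/\nu'}(x_1,\dots,x_n) = \bbra{\lambda}\overline{\mathbb{B}}(x_n)\cdots\overline{\mathbb{B}}(x_1)\kett{\nu}$ (which follows from $(\lambda')'=\lambda$ and $(\nu')'=\nu$), I would introduce the mixed expectation value
\[
\mathcal{E}_{\mu,\nu}(u_1,\dots,u_m;x_1,\dots,x_n) := \bbra{\mu}\widetilde{C}(u_1)\cdots\widetilde{C}(u_m)\,\overline{\mathbb{B}}(x_n)\cdots\overline{\mathbb{B}}(x_1)\kett{\nu}
\]
and evaluate it in two different ways.

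On the one hand, inserting $\mathbbm{1} = \sum_{\lambda}\kett{\lambda}\bbra{\lambda}$ between the $\widetilde{C}$-block and the $\overline{\mathbb{B}}$-block and reading off the two matrix elements gives $\sum_{\lambda}\widetilde{\F}_{\lambda/\mu}(u_1,\dots,u_m)\,\mathbb{F}^{*}_{\lambda'/\nu'}(x_1,\dots,x_n)$, the left-hand side of \eqref{skew_dual_cauchy}. On the other hand, $mn$ applications of \eqref{cB} carry the whole $\overline{\mathbb{B}}$-block past the whole $\widetilde{C}$-block, each passage of $\widetilde{C}(u_i)$ through $\overline{\mathbb{B}}(x_j)$ contributing the scalar $(1+u_i x_j)/(1-su_i)$; after extracting $\prod_{i=1}^{m}\prod_{j=1}^{n}(1+u_i x_j)/(1-su_i)$ one inserts $\mathbbm{1} = \sum_{\kappa}\kett{\kappa}\bbra{\kappa}$ between the relocated $\overline{\mathbb{B}}$-block and the $\widetilde{C}$-block, and identifies $\bbra{\mu}\overline{\mathbb{B}}(x_n)\cdots\overline{\mathbb{B}}(x_1)\kett{\kappa} = \mathbb{F}^{*}_{\mu'/\kappa'}(x_1,\dots,x_n)$ together with $\bbra{\kappa}\widetilde{C}(u_1)\cdots\widetilde{C}(u_m)\kett{\nu} = \widetilde{\F}_{\nu/\kappa}(u_1,\dots,u_m)$. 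This reproduces the right-hand side of \eqref{skew_dual_cauchy}, so comparing the two evaluations proves the theorem. The alternative form \eqref{alternative} in the footnote then follows either by multiplying through by $\tilde{\c}_{\nu}(q,s)/\tilde{\c}_{\mu}(q,s)$ and redistributing normalizations, or by running the identical argument with $\mathbb{C}(x)$ and $\ast{\widetilde{B}}(u)$ in place of $\overline{\mathbb{B}}(x)$ and $\widetilde{C}(u)$ and invoking \eqref{Cb} instead of \eqref{cB}.

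The combinatorial bookkeeping here — two insertions of the identity and $mn$ commutations — is entirely parallel to the \hl case, so the genuine obstacle I expect is analytic rather than algebraic: one must check that $\mathcal{E}_{\mu,\nu}$ is a bona fide matrix element of a composite operator on the infinite tensor product $\wt{\mathbb{V}}$, that the $L\to\infty$ limits defining $\widetilde{C}(u)$ and $\overline{\mathbb{B}}(x)$ may legitimately be composed, and that the (a priori infinite) sum over $\lambda$ on the left-hand side converges. This should force the appropriate smallness hypotheses on the parameters — $|s|<1$ together with $|x_j|$, and possibly $|u_i|$, small enough that the stable \hl functions decay in $|\lambda|$, in the spirit of the conditions appearing in Theorem~\ref{thm:hl_cauchy} — under which all of the identity insertions and the reordering \eqref{cB} are justified. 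Once this is in place the right-hand sum is automatically finite, since $\mathbb{F}^{*}_{\mu'/\kappa'}$ vanishes unless $\kappa\subset\mu$ and $\widetilde{\F}_{\nu/\kappa}$ vanishes unless $\kappa\subset\nu$.
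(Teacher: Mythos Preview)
Your proposal is correct and matches the paper's own proof essentially line for line: the paper defines the same mixed expectation value $\bbra{\mu}\widetilde{C}(u_1)\cdots\widetilde{C}(u_m)\,\overline{\mathbb{B}}(x_n)\cdots\overline{\mathbb{B}}(x_1)\kett{\nu}$ and expands it in two ways using \eqref{def_stable}, \eqref{dual-skew-s-wh}, and the exchange relation \eqref{cB}, exactly as you describe. Your remarks on the analytic caveats (convergence of the $\lambda$-sum, legitimacy of the infinite-volume compositions) go slightly beyond what the paper makes explicit, but are in the same spirit as the hypotheses imposed elsewhere in the paper.
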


\begin{proof}
This is proved by arguments which should by now be familiar to the reader. The starting point is the expectation value
\begin{align*}
\mathcal{E}_{\mu,\nu}(u_1,\dots,u_m; x_1,\dots,x_n)
:=
\bbra{\mu} \widetilde{C}(u_1) \dots \widetilde{C}(u_m)
\mathbb{B}^{*}(x_n) \dots \mathbb{B}^{*}(x_1) \kett{\nu},
\end{align*}
which, using \eqref{def_stable}, \eqref{dual-skew-s-wh} and \eqref{cB}, can be expanded in two different ways in terms of skew stable \hl functions and skew \wh polynomials.
\end{proof}

\begin{cor}{\rm
For any two positive integers $m$ and $n$, the stable \hl functions and \wh polynomials satisfy the following dual Cauchy identity:
\begin{align}
\label{dual_cauchy}
\sum_{\lambda}
\widetilde{\F}_{\lambda}(u_1,\dots,u_m)
\mathbb{F}^{*}_{\lambda'}(x_1,\dots,x_n)
=
\sum_{\lambda}
\widetilde{\F}^{*}_{\lambda}(u_1,\dots,u_m)
\mathbb{F}_{\lambda'}(x_1,\dots,x_n)
=
\prod_{i=1}^{m}
\prod_{j=1}^{n}
\left(
\frac{1+u_i x_j}{1-s u_i}
\right).
\end{align}
Again, we note that this reproduces the correct dual Cauchy identity between Hall--Littlewood and $q$--Whittaker polynomials by setting $s=0$.
}
\end{cor}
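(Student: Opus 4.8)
The plan is to obtain \eqref{dual_cauchy} as the $\mu=\nu=\varnothing$ specialization of the skew dual Cauchy identity \eqref{skew_dual_cauchy}, in exact parallel with the way the Cauchy identity \eqref{wh_cauchy} was extracted from \eqref{sk_wh_cauchy}. So the first step is to set $\mu=\nu=\varnothing$ in \eqref{skew_dual_cauchy}. On the left-hand side the skew functions degenerate to the non-skew ones, giving $\sum_{\lambda}\widetilde{\F}_{\lambda}(u_1,\dots,u_m)\,\mathbb{F}^{*}_{\lambda'}(x_1,\dots,x_n)$. On the right-hand side, the summation over $\kappa$ is subject to $\kappa'\subset\mu'=\varnothing$ (equivalently $\kappa'\subset\nu'=\varnothing$), which forces $\kappa=\varnothing$, so exactly one term survives. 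Using $\widetilde{\F}_{\varnothing}(u_1,\dots,u_m)=1$ — immediate from the symmetrization formula \eqref{symmetriz} with $\ell(\lambda)=0$, or from the lattice picture \eqref{fused_lattice2} with all $J_i=1$ — together with $\mathbb{F}^{*}_{\varnothing}(x_1,\dots,x_n)=1$ (all boundary edges of the defining partition function are trivial), the right-hand side collapses to $\prod_{i=1}^{m}\prod_{j=1}^{n}\frac{1+u_i x_j}{1-s u_i}$. This yields the first equality in \eqref{dual_cauchy}.

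For the middle equality, I would invoke the elementary normalizations $\widetilde{\F}^{*}_{\lambda}=\tilde{\c}_{\lambda}(q,s)\,\widetilde{\F}_{\lambda}$ from \eqref{tildeFc}, and $\mathbb{F}^{*}_{\lambda'}=\tilde{\c}_{(\lambda')'}(q,s)\,\mathbb{F}_{\lambda'}=\tilde{\c}_{\lambda}(q,s)\,\mathbb{F}_{\lambda'}$, which is the $\mu=\varnothing$, $\sigma\mapsto\lambda'$ case of the definition $\mathbb{F}^{*}_{\sigma/\tau}=(\tilde{\c}_{\sigma'}/\tilde{\c}_{\tau'})\,\mathbb{F}_{\sigma/\tau}$ of the dual skew \wh polynomials. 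These show that the $\lambda$-th summand of $\sum_{\lambda}\widetilde{\F}_{\lambda}\,\mathbb{F}^{*}_{\lambda'}$ equals $\widetilde{\F}_{\lambda}\,\tilde{\c}_{\lambda}\,\mathbb{F}_{\lambda'}=\widetilde{\F}^{*}_{\lambda}\,\mathbb{F}_{\lambda'}$, so the two sums are literally identical term by term. (Alternatively, one reads off the second form directly by specializing the alternative skew identity \eqref{alternative} at $\mu=\nu=\varnothing$, which rests on the commutation relation \eqref{Cb}.)

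Since every ingredient is already in place — the skew identity \eqref{skew_dual_cauchy}, the normalization \eqref{tildeFc}, the definition of $\mathbb{F}^{*}$, and the two trivial evaluations $\widetilde{\F}_{\varnothing}=\mathbb{F}^{*}_{\varnothing}=1$ — there is no genuine obstacle. The only point that deserves a moment's care is verifying that the right-hand sum in \eqref{skew_dual_cauchy} really does reduce to the single $\kappa=\varnothing$ term once $\mu=\nu=\varnothing$, which is clear from the support conditions stated there, and (if one wants the $s=0$ remark) noting that both sides reduce to the classical dual Cauchy pairing of $Q$- and $P$-type functions with $q$--Whittaker polynomials.
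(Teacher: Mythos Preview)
Your proof is correct and follows the same approach as the paper: specialize \eqref{skew_dual_cauchy} at $\mu=\nu=\varnothing$, observe the right-hand sum collapses to $\kappa=\varnothing$, and use $\widetilde{\F}_{\varnothing}=\mathbb{F}^{*}_{\varnothing}=1$. Your additional justification of the middle equality via the normalizations $\widetilde{\F}^{*}_{\lambda}=\tilde{\c}_{\lambda}\widetilde{\F}_{\lambda}$ and $\mathbb{F}^{*}_{\lambda'}=\tilde{\c}_{(\lambda')'}\mathbb{F}_{\lambda'}=\tilde{\c}_{\lambda}\mathbb{F}_{\lambda'}$ is correct and fills in a detail the paper leaves implicit.
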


\begin{proof}
This follows from equation \eqref{skew_dual_cauchy} by choosing $\mu = \nu = \varnothing$.
\end{proof}

\subsection{Cauchy identity for stable \hl functions}

\begin{thm}{\rm
Fix two positive integers $m$ and $n$, and let $\mu$ and $\nu$ be two partitions. The stable \hl functions satisfy the summation identity
\begin{align}
\label{stable_hl_cauchy}
\sum_{\lambda}
\widetilde{\F}_{\lambda/\mu}(u_1,\dots,u_m)
\widetilde{\F}^{*}_{\lambda/\nu}(v_1,\dots,v_n)
=
\prod_{i=1}^{m}
\prod_{j=1}^{n}
\left(
\frac{1-q u_i v_j}{1- u_i v_j}
\right)
\sum_{\kappa}
\widetilde{\F}_{\nu/\kappa}(u_1,\dots,u_m)
\widetilde{\F}^{*}_{\mu/\kappa}(v_1,\dots,v_n),
\end{align}
assuming that $|(u_i-s)(v_j-s)| < |(1-su_i)(1-sv_j)|$ for all $i,j$.
}
\end{thm}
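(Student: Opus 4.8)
The plan is to mimic the proof of Theorem~\ref{thm:hl_cauchy} essentially word for word, with the Hall--Littlewood row operators $C(u)$ and $\b{D}(v)$ replaced by the stable operators $\widetilde{C}(u)$ and $\ast{\widetilde{B}}(v)$, and the exchange relation \eqref{HL_exchange} replaced by \eqref{cb}. Concretely, I would introduce the expectation value
\begin{align*}
\mathcal{E}_{\mu,\nu}(u_1,\dots,u_m;v_1,\dots,v_n)
:=
\bbra{\mu}\,
\widetilde{C}(u_1)\dots\widetilde{C}(u_m)\,
\ast{\widetilde{B}}(v_n)\dots\ast{\widetilde{B}}(v_1)\,
\kett{\nu}
\end{align*}
in ${\rm End}(\wt{\mathbb{V}})$, and evaluate it in two ways. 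First, inserting the resolution of the identity $\mathbbm{1}=\sum_{\lambda}\kett{\lambda}\bbra{\lambda}$ between $\widetilde{C}(u_m)$ and $\ast{\widetilde{B}}(v_n)$, and using the operator representations $\widetilde{\F}_{\lambda/\mu}(u_1,\dots,u_m)=\bbra{\mu}\widetilde{C}(u_1)\dots\widetilde{C}(u_m)\kett{\lambda}$ and $\widetilde{\F}^{*}_{\lambda/\nu}(v_1,\dots,v_n)=\bbra{\lambda}\ast{\widetilde{B}}(v_n)\dots\ast{\widetilde{B}}(v_1)\kett{\nu}$ of the skew stable functions (the natural extension of \eqref{def_stable} to skew shapes, with $\bbra{\varnothing}$ replaced by $\bbra{\mu}$), one obtains exactly the left-hand side of \eqref{stable_hl_cauchy}.

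Second, I would move every $\widetilde{C}(u_i)$ to the right of every $\ast{\widetilde{B}}(v_j)$ by $mn$ successive applications of \eqref{cb}, each elementary swap producing the scalar factor $(1-qu_iv_j)/(1-u_iv_j)$ (the order within each block of $\widetilde{C}$'s, resp.\ $\ast{\widetilde{B}}$'s, being immaterial in view of the stable limit of \eqref{commute2}, equivalently by the symmetry of the stable functions manifest from \eqref{symmetriz}). This yields
\begin{align*}
\mathcal{E}_{\mu,\nu}
=
\prod_{i=1}^{m}\prod_{j=1}^{n}\frac{1-qu_iv_j}{1-u_iv_j}\;
\bbra{\mu}\,
\ast{\widetilde{B}}(v_n)\dots\ast{\widetilde{B}}(v_1)\,
\widetilde{C}(u_1)\dots\widetilde{C}(u_m)\,
\kett{\nu}.
\end{align*}
Inserting now $\mathbbm{1}=\sum_{\kappa}\kett{\kappa}\bbra{\kappa}$ between $\ast{\widetilde{B}}(v_1)$ and $\widetilde{C}(u_1)$ and reusing the same operator representations turns the matrix element into $\sum_{\kappa}\widetilde{\F}_{\nu/\kappa}(u_1,\dots,u_m)\widetilde{\F}^{*}_{\mu/\kappa}(v_1,\dots,v_n)$, i.e.\ the right-hand side of \eqref{stable_hl_cauchy}. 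Comparing the two evaluations of $\mathcal{E}_{\mu,\nu}$ then finishes the argument.

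The one step that requires genuine care, rather than formal bookkeeping, is convergence. Unlike in the $q$--Whittaker Cauchy identity \eqref{sk_wh_cauchy}, where the summand is supported on finitely many partitions, here $\lambda$ and $\kappa$ have bounded lengths (at most $\ell(\mu)+m$ and $\ell(\nu)+n$ by the branching rule) but unbounded parts, so the sums over $\lambda$ and $\kappa$ are genuinely infinite; one must check absolute convergence of these sums, of the infinite operator products defining $\widetilde{C}(u)$ and $\ast{\widetilde{B}}(v)$ as $L\to\infty$ limits, and of the two state insertions. This is precisely where the hypothesis $|(u_i-s)(v_j-s)|<|(1-su_i)(1-sv_j)|$ enters: it is exactly the condition under which \eqref{cb} was established in ${\rm End}(\wt{\mathbb{V}})$, and, just as in the passage from \eqref{exchange} to \eqref{HL_exchange}, it forces the relevant tails to vanish so that all the manipulations above are legitimate. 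I expect this analytic verification to be the main (and essentially the only) obstacle; the algebraic content is identical to the Hall--Littlewood case treated in Theorem~\ref{thm:hl_cauchy}.
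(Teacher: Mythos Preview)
Your proposal is correct and follows exactly the approach the paper takes: the paper's proof simply says that \eqref{stable_hl_cauchy} is established in the same way as Theorem~\ref{thm:hl_cauchy}, but using the commutation relation \eqref{cb} in place of \eqref{HL_exchange}. Your expectation-value argument with $\widetilde{C}(u)$ and $\ast{\widetilde{B}}(v)$, two insertions of the identity, and $mn$ applications of \eqref{cb} is precisely this, and your remarks on convergence correctly identify where the hypothesis on $u_i,v_j$ is used.
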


\begin{proof}
This can be established in the same way as in the proof of the Cauchy identity \eqref{HL_skew_C} for (ordinary) \hl functions, but now using the commutation relation \eqref{cb}.
\end{proof}

\begin{cor}{\rm
For any two positive integers $m$ and $n$, the stable \hl functions satisfy the Cauchy identity
\begin{align*}
\sum_{\lambda}
\widetilde{\F}_{\lambda}(u_1,\dots,u_m)
\widetilde{\F}^{*}_{\lambda}(v_1,\dots,v_n)
=
\prod_{i=1}^{m}
\prod_{j=1}^{n}
\left(
\frac{1-q u_i v_j}{1- u_i v_j}
\right),
\end{align*}
assuming that $|(u_i-s)(v_j-s)| < |(1-su_i)(1-sv_j)|$ for all $i,j$.
}
\end{cor}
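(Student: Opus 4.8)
The plan is to obtain this corollary as the $\mu=\nu=\varnothing$ specialization of the skew Cauchy identity \eqref{stable_hl_cauchy}, which has just been proved; no genuinely new argument is needed.

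First I would set $\mu=\nu=\varnothing$ throughout \eqref{stable_hl_cauchy}. On the left-hand side, $\widetilde{\F}_{\lambda/\varnothing}=\widetilde{\F}_{\lambda}$ and $\widetilde{\F}^{*}_{\lambda/\varnothing}=\widetilde{\F}^{*}_{\lambda}$, so it becomes exactly $\sum_{\lambda}\widetilde{\F}_{\lambda}(u_1,\dots,u_m)\widetilde{\F}^{*}_{\lambda}(v_1,\dots,v_n)$. On the right-hand side the prefactor $\prod_{i=1}^{m}\prod_{j=1}^{n}\frac{1-qu_iv_j}{1-u_iv_j}$ is already what we want, so the task reduces to showing that the residual sum $\sum_{\kappa}\widetilde{\F}_{\varnothing/\kappa}(u_1,\dots,u_m)\widetilde{\F}^{*}_{\varnothing/\kappa}(v_1,\dots,v_n)$ equals $1$. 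Just as in the proof of Theorem \ref{thm:hl_cauchy}, the summand vanishes unless $\kappa\subset\varnothing$, forcing $\kappa=\varnothing$, so this sum collapses to the single term $\widetilde{\F}_{\varnothing}(u_1,\dots,u_m)\,\widetilde{\F}^{*}_{\varnothing}(v_1,\dots,v_n)$.

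It then remains to check $\widetilde{\F}_{\varnothing}=\widetilde{\F}^{*}_{\varnothing}=1$. For the first I would put $\lambda=\varnothing$ (hence $\ell(\lambda)=0$) in the symmetrization formula \eqref{symmetriz} and use the classical identity $\sum_{\sigma\in S_m}\sigma\bigl\{\prod_{1\leq i<j\leq m}\frac{u_i-qu_j}{u_i-u_j}\bigr\}=\frac{(q;q)_m}{(1-q)^m}$, which exactly cancels the normalization $\frac{(1-q)^m}{(q;q)_m}$; equivalently one can read this off from the lattice definition \eqref{def_stable}, where the only non-vanishing configuration is the all-empty one, of weight $1$. For the second I would use \eqref{tildeFc}, namely $\widetilde{\F}^{*}_{\varnothing}=\tilde{\c}_{\varnothing}(q,s)\widetilde{\F}_{\varnothing}$ with $\tilde{\c}_{\varnothing}(q,s)=\prod_{i\geq1}\frac{(s^2;q)_{m_i(\varnothing)}}{(q;q)_{m_i(\varnothing)}}=1$, since $m_i(\varnothing)=0$ for all $i\geq1$. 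Matching the two sides then yields the stated Cauchy identity.

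Since the whole argument is a pure specialization of an already-established identity, there is no real obstacle; the only step that needs any thought at all is the evaluation $\widetilde{\F}_{\varnothing}=1$, and even that is a routine Hall--Littlewood computation (or an immediate consequence of the lattice picture, in which all vertices are empty).
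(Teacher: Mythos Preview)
Your proposal is correct and follows exactly the same approach as the paper, which simply records that the corollary is the $\mu=\nu=\varnothing$ case of \eqref{stable_hl_cauchy}. Your additional verification that $\widetilde{\F}_{\varnothing}=\widetilde{\F}^{*}_{\varnothing}=1$ is a welcome elaboration of a detail the paper leaves implicit.
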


\begin{proof}
This is the $\mu = \nu = \varnothing$ case of \eqref{stable_hl_cauchy}.
\end{proof}

\subsection{Pieri rules}

In symmetric function theory, the {\it Pieri rules} constitute the simplest types of product formulae, \ie\ they are rules for multiplying a certain symmetric function by (typically) a more elementary one. Here we list two such formulae, which arise as special cases of the Cauchy and dual Cauchy identities for skew polynomials.

\underline{\it First Pieri rule for $\mathbb{F}_{\lambda}$.} We take the specialization $n=1$, $\mu=\varnothing$ of the identity \eqref{sk_wh_cauchy}. This trivializes the summation on the right hand side of \eqref{sk_wh_cauchy}, forcing $\kappa = \varnothing$. We then read
\begin{align*}
\sum_{\lambda}
\mathbb{F}_{\lambda}(x_1,\dots,x_m)
\mathbb{F}^{*}_{\lambda/\nu}(y)
=
\prod_{i=1}^{m}
\frac{(-sx_i;q)_{\infty} (-sy;q)_{\infty}}{(s^2;q)_{\infty} (x_i y;q)_{\infty}}\,
\mathbb{F}_{\nu}(x_1,\dots,x_m).
\end{align*}
This can be converted into a formula more closely resembling a product rule, by expanding the multiplicative factor appearing on the right hand side in terms of \wh polynomials. We are easily able to do that, using the Cauchy identity \eqref{wh_cauchy} itself at $n=1$, and the explicit expression \eqref{one-var2} for the one-variable dual \wh polynomial. We obtain the equation
\begin{align*}
\sum_{\lambda}
\mathbb{F}_{\lambda}(x_1,\dots,x_m)
\mathbb{F}^{*}_{\lambda/\nu}(y)
=
\left(1+
\sum_{i=1}^{\infty}
y^i \frac{(-s/y;q)_i}{(q;q)_i}
\mathbb{F}_i(x_1,\dots,x_m)
\right)
\mathbb{F}_{\nu}(x_1,\dots,x_m).
\end{align*}
This identity is the natural $s$-generalization of the ``horizontal'' Pieri rule for $q$--Whittaker polynomials: indeed, at $s=0$ the $y$ variable becomes a generating parameter that can be dropped from both sides of the equation, and we obtain
\begin{align*}
\sum_{\lambda \succ \nu: |\lambda|-|\nu|=i}
P_{\lambda}(x_1,\dots,x_m)
\varphi'_{\lambda/\nu}(q)
&=
\frac{1}{(q;q)_i}
P_i(x_1,\dots,x_m)
P_{\nu}(x_1,\dots,x_m),
\\
\text{where}\quad
\varphi'_{\lambda/\nu}(q)
:&=
\prod_{j \geq 1}
\frac{(q;q)_{\nu_j-\nu_{j+1}}}
{(q;q)_{\lambda_j-\nu_j} (q;q)_{\nu_j-\lambda_{j+1}}},
\end{align*}
expressing the product of a one-row $q$--Whittaker polynomial $P_i$ and a general $q$--Whittaker polynomial $P_{\nu}$ as a sum over the ways of adding weight $i$ horizontal strips to the starting Young diagram $\nu$.

\underline{\it Second Pieri rule for $\mathbb{F}_{\lambda}$.} We take the $m=1$, $\nu = \varnothing$ specialization of \eqref{alternative}. This reduces the summation on its right hand side to the single term $\kappa = \varnothing$, and we find that
\begin{align*}
\sum_{\lambda}
\mathbb{F}_{\lambda}(x_1,\dots,x_n)
\widetilde{\F}^{*}_{\lambda'/\mu'}(u)
=
\prod_{j=1}^{n}
\left(
\frac{1+u x_j}{1-s u}
\right)
\mathbb{F}_{\mu}(x_1,\dots,x_n),
\end{align*}
where we have also conjugated all partitions appearing in the identity. Expanding the multiplicative factor on the right hand side using the dual Cauchy identity \eqref{dual_cauchy}, we then have
\begin{multline*}
\sum_{\lambda}
\mathbb{F}_{\lambda}(x_1,\dots,x_n)
\widetilde{\F}^{*}_{\lambda'/\mu'}(u)
=
\\
\left( 1+\frac{u(1-s^2)}{1-su} \sum_{i=1}^{n} \left( \frac{u-s}{1-su} \right)^{i-1} 
\mathbb{F}_{1^i}(x_1,\dots,x_n) \right)
\mathbb{F}_{\mu}(x_1,\dots,x_n).
\end{multline*}
This identity, in turn, plays the role of an $s$-generalization of the ``vertical'' Pieri rule for $q$--Whittaker polynomials. At $s=0$, it becomes
\begin{align*}
\sum_{\lambda' \succ \mu': |\lambda|-|\mu|=i}
P_{\lambda}(x_1,\dots,x_n)
\psi_{\lambda'/\mu'}(q)
&=
P_{1^i}(x_1,\dots,x_n)
P_{\mu}(x_1,\dots,x_n),
\\
\text{where}\quad
\psi_{\lambda/\mu}(q)
:&=
\prod_{j \geq 1: m_j(\mu) = m_j(\lambda)+1}
\left(1-q^{m_j(\mu)} \right),
\end{align*}
expressing the product of a one-column $q$--Whittaker polynomial $P_{1^i}$ and a general $q$--Whittaker polynomial $P_{\mu}$ as a sum over the ways of adding weight $i$ vertical strips to the starting Young diagram $\mu$.

\section{Integral representation of \wh polynomials}
\label{sec:integral}

We conclude with an elegant integral formula for the \wh polynomials. The derivation of this formula is based on a known integral expression for the \hl function $\G_{\lambda}$, found in \cite{Borodin}. Indeed, since the \wh polynomials are obtained in a systematic way by the fusion/analytic continuation procedure of Section \ref{sec:wh_poly}, we need only apply these steps to the existing integral formula for $\G_{\lambda}$ and observe what we obtain at the end of the calculation.

\subsection{Multiple integral formula for $\G_{\lambda}(v_1,\dots,v_N)$}

Let $n \geq k$ be two positive integers, and begin by fixing a partition $\lambda \in {\rm Part}_n$, whose first $n-k$ parts are positive and whose final $k$ parts are equal to zero; \ie\ we have $\lambda_i \geq 1$ for all $1 \leq i \leq n-k$, and $\lambda_{n-k+1} = \cdots = \lambda_{n} = 0$. Choose another integer $N$ such that $N \geq n-k$. Quoting equation (7.8) from \cite{Borodin}, the following integral formula for \hl functions holds:
\begin{multline*}
\G_{\lambda}(v_1,\dots,v_N)
=
(s^2;q)_n \times
\\
\oint_{\mathcal{C}} \frac{du_1}{2\pi\i}
\cdots
\oint_{\mathcal{C}} \frac{du_n}{2\pi\i}
\prod_{1 \leq i<j \leq n}
\left(
\frac{u_i-u_j}{u_i-q u_j}
\right)
\prod_{i=1}^{n}
\left(
\frac{1}{(1-s u_i)(u_i-s)}
\left( 
\frac{1-su_i}{u_i-s}
\right)^{\lambda_i}
\prod_{j=1}^{N}
\frac{1-qu_i v_j}{1-u_i v_j}
\right),
\end{multline*}
where all integrations take place along the same positively-oriented contour $\mathcal{C}$. This contour is chosen such that {\bf 1.} The points $s^{-1}$ and $\{v_1^{-1},\dots,v_N^{-1}\}$ lie outside $\mathcal{C}$; {\bf 2.} The points $\{s,sq,\dots,sq^{n-1}\}$ lie inside $\mathcal{C}$; {\bf 3.} The image of $\mathcal{C}$ under multiplication by $q$, denoted $q\mathcal{C}$, lies completely inside $\mathcal{C}$. An example of a suitable contour is shown in Figure \ref{fig:contour}.

\begin{figure}
\begin{tikzpicture}[>=stealth]
\draw[gray,thick,->] (-2.5,0) -- (9,0);
\draw[gray,thick,->] (0,-3) -- (0,3);
\draw[thick,<-] (5,0) arc (0:-360:3 and 2);
\draw[densely dotted] (1.7,0) ellipse [x radius=2.5cm, y radius=1.5cm];
\node at (-0.2,-0.2) {\scriptsize $0$};
\node at (1.65,1.65) {\scriptsize$q\mathcal{C}$};
\node at (0.75,2) {\scriptsize$\mathcal{C}$};
\foreach\x in {0.5,0.9,1.5,2.3,3.3}{
\filldraw[fill=lgray] (\x,0) arc (0:-360:0.05);
}
\node[below,text centered] at (3.3,0) {\tiny$s$};
\node[below,text centered] at (2.3,0) {\tiny$sq$};
\node[below,text centered] at (1.5,0) {\tiny$\cdots$};
\node[below,text centered] at (0.5,0) {\tiny$sq^{n-1}$};
\foreach\x in {6.45,6.6,6.75,6.9,8}{
\filldraw[fill=lgray] (\x,0) arc (0:-360:0.05);
}
\node[below,text centered] at (6.6,0) {\tiny$\{v_j^{-1}\}$};
\node[below,text centered] at (8,0) {\tiny$s^{-1}$};
\end{tikzpicture}
\caption{A possible arrangement of $\mathcal{C}$ and poles in the plane.}
\label{fig:contour}
\end{figure}
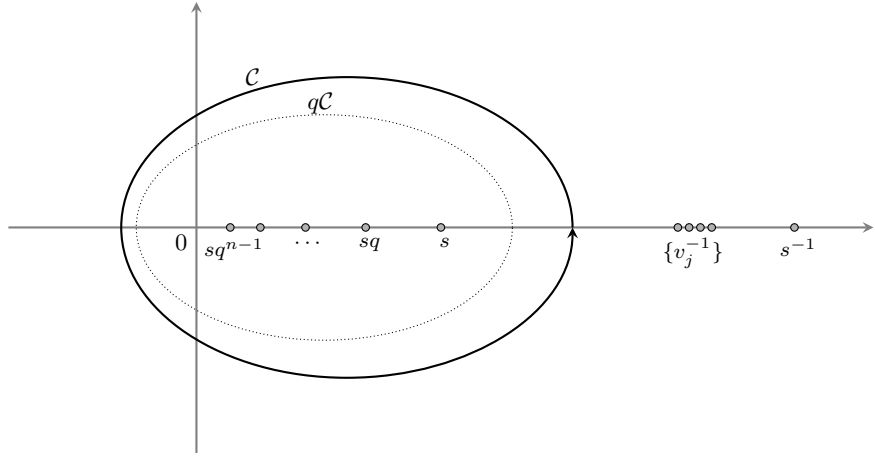
Following Section 7 of \cite{Borodin}, we integrate over the $u_n,\dots,u_{n-k+1}$ contours explicitly, starting with $u_n$ and working backwards sequentially. This is a straightforward calculation, in view of the assumption $\lambda_{n-k+1} = \cdots = \lambda_{n} = 0$. For each $j=n,\dots,n-k+1$, one readily sees that the $u_j$ contour only surrounds a simple pole at $u_j=sq^{n-j}$, whose residue can be taken immediately. After performing these integrations, the formula then reads
\begin{multline}
\label{eq7.9}
\G_{\lambda}(v_1,\dots,v_N)
=
\frac{(s^2;q)_n}{(s^2;q)_k}
\prod_{j=1}^{N}
\left(
\frac{1-sq^k v_j}{1-sv_j} 
\right)
\times
\oint_{\mathcal{C}} \frac{du_1}{2\pi\i}
\cdots
\oint_{\mathcal{C}} \frac{du_{n-k}}{2\pi\i}
\\
\prod_{1 \leq i<j \leq n-k}
\left(
\frac{u_i-u_j}{u_i-q u_j}
\right)
\prod_{i=1}^{n-k}
\left(
\frac{1}{(1-s u_i)(u_i-sq^k)}
\left( 
\frac{1-su_i}{u_i-s}
\right)^{\lambda_i}
\prod_{j=1}^{N}
\frac{1-qu_i v_j}{1-u_i v_j}
\right),
\end{multline}
\cf\ equation (7.9) in \cite{Borodin}.

\subsection{Fusion combined with analytic continuation}

We now apply the following steps to the integral \eqref{eq7.9}: {\bf 1.} We send $n,k \rightarrow \infty$ while keeping $n-k$ fixed and finite. For simplicity, we write $n-k \equiv \ell$; {\bf 2.} We take a $\{K_1,\dots,K_m;s,\dots,s\}$-specialization of the variables $(v_1,\dots,v_N)$, where it is assumed that $N = K_1+ \cdots + K_m$; {\bf 3.} The resulting expression depends on $K_1,\dots,K_m$ only via $q^{K_1},\dots,q^{K_m}$, allowing us to analytically continue in these variables, letting $q^{K_i} \mapsto -x_i/s$ for all $1 \leq i \leq m$; {\bf 4.} We normalize by dividing by $\prod_{i=1}^{m} (-sx_i;q)_{\infty}/(s^2;q)_{\infty}$, which must be a common factor of the final expression.

As we explained in Section \ref{sec:G_fusion}, performing these steps to the \hl function $\G_{\lambda}(v_1,\dots,v_N)$ transforms it exactly into the \wh polynomial $\mathbb{F}_{\lambda'}(x_1,\dots,x_m)$, where $\lambda'$ is the conjugate partition of $(\lambda_1,\dots,\lambda_{\ell})$. The result of these calculations will be, therefore, a multiple integral formula for $\mathbb{F}_{\lambda'}(x_1,\dots,x_m)$.

Applying the first step, we see immediately that
\begin{multline*}
\G_{\lambda}(v_1,\dots,v_N)
\Big|_{\rm Step\ {\bf 1}}
=
\prod_{j=1}^{N}
\left(
\frac{1}{1-sv_j} 
\right)
\times
\\
\oint_{\mathcal{C}} \frac{du_1}{2\pi\i u_1}
\cdots
\oint_{\mathcal{C}} \frac{du_{\ell}}{2\pi\i u_{\ell}}
\prod_{1 \leq i<j \leq \ell}
\left(
\frac{u_i-u_j}{u_i-q u_j}
\right)
\prod_{i=1}^{\ell}
\left(
\frac{1}{1-s u_i}
\left( 
\frac{1-su_i}{u_i-s}
\right)^{\lambda_i}
\prod_{j=1}^{N}
\frac{1-qu_i v_j}{1-u_i v_j}
\right),
\end{multline*}
where the contour of integration $\mathcal{C}$ is as before: all points $s q^i$, $i \in \mathbb{Z}_{\geq 0}$, and the point $0$, are contained within it. The geometric specialization\footnote{Note that setting each $v$ variable to $s q^i$ for some $i \in \mathbb{Z}_{\geq 0}$ is consistent with the assumption that the points $\{v_j^{-1}\}$ lie outside of $\mathcal{C}$.} of the second step yields
\begin{multline*}
\G_{\lambda}(v_1,\dots,v_N)
\Big|_{\rm Steps\ {\bf 1},{\bf 2}}
=
\prod_{i=1}^{m}
\frac{(s^2 q^{K_i};q)_{\infty}}{(s^2;q)_{\infty}} 
\times
\\
\oint_{\mathcal{C}} \frac{du_1}{2\pi\i u_1}
\cdots
\oint_{\mathcal{C}} \frac{du_{\ell}}{2\pi\i u_{\ell}}
\prod_{1 \leq i<j \leq \ell}
\left(
\frac{u_i-u_j}{u_i-q u_j}
\right)
\prod_{i=1}^{\ell}
\left(
\frac{1}{1-s u_i}
\left( 
\frac{1-su_i}{u_i-s}
\right)^{\lambda_i}
\prod_{j=1}^{m}
\frac{1-s u_i q^{K_j}}{1-s u_i}
\right),
\end{multline*}
and finally, the analytic continuation of the third step gives
\begin{multline*}
\G_{\lambda}(v_1,\dots,v_N)
\Big|_{\rm Steps\ {\bf 1},{\bf 2},{\bf 3}}
=
\prod_{i=1}^{m}
\frac{(-sx_i;q)_{\infty}}{(s^2;q)_{\infty}} 
\times
\\
\oint_{\mathcal{C}} \frac{du_1}{2\pi\i u_1}
\cdots
\oint_{\mathcal{C}} \frac{du_{\ell}}{2\pi\i u_{\ell}}
\prod_{1 \leq i<j \leq \ell}
\left(
\frac{u_i-u_j}{u_i-q u_j}
\right)
\prod_{i=1}^{\ell}
\left(
\frac{1}{1-s u_i}
\left( 
\frac{1-su_i}{u_i-s}
\right)^{\lambda_i}
\prod_{j=1}^{m}
\frac{1+u_i x_j}{1-s u_i}
\right).
\end{multline*}
After the normalization required by the fourth step (noting that the correct overall factor does indeed emerge from the calculation) we arrive at our desired  formula, which we now quote as a theorem:

\begin{thm}{\rm
Let $m$ be a positive integer, and $\lambda,\lambda'$ denote a partition and its conjugate, chosen such that $\ell(\lambda) = \lambda'_1 \leq m$. The \wh polynomial $\mathbb{F}_{\lambda}(x_1,\dots,x_m)$ is given by the integral expression
\begin{multline*}
\mathbb{F}_{\lambda}(x_1,\dots,x_m)
=
\\
\oint_{\mathcal{C}} \frac{du_1}{2\pi\i u_1}
\cdots
\oint_{\mathcal{C}} \frac{du_L}{2\pi\i u_L}
\prod_{1 \leq i<j \leq L}
\left(
\frac{u_i-u_j}{u_i-q u_j}
\right)
\prod_{i=1}^{L}
\left( 
\frac{1-su_i}{u_i-s}
\right)^{\lambda'_i}
\left(
\frac{\prod_{j=1}^{m}(1+u_i x_j)}{(1-s u_i)^{m+1}}
\right),
\end{multline*}
where $L=\lambda_1$ denotes the largest part of $\lambda$ and the contour $\mathcal{C}$ is as specified above; see Figure \ref{fig:contour}. An alternative way to arrive at this formula would be to use the orthogonality of $\F_{\lambda}$'s (\cf\ \cite{Borodin,BorodinP1,BorodinP2}) and the dual Cauchy identity \eqref{dual_cauchy}, above.
}
\end{thm}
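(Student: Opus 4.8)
The plan is to follow exactly the recipe advertised in the preamble of Section \ref{sec:integral}: take the known multiple integral formula \eqref{eq7.9} for $\G_\lambda(v_1,\dots,v_N)$ from \cite{Borodin}, and apply the four-step fusion/analytic-continuation procedure (send $n,k\to\infty$ with $n-k=\ell$ fixed; $\{K_1,\dots,K_m;s,\dots,s\}$-specialize the $v$'s; analytically continue $q^{K_i}\mapsto -x_i/s$; normalize by $\prod_i (-sx_i;q)_\infty/(s^2;q)_\infty$). By the discussion in Section \ref{sec:G_fusion}, this procedure turns $\G_\lambda$ into $\mathbb{F}_{\lambda'}(x_1,\dots,x_m)$, so the output is automatically a formula for the \wh polynomial. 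Since the excerpt already carries out Steps {\bf 1}, {\bf 2}, {\bf 3} explicitly, all that is really left for the proof is to record Step {\bf 4} and then rename variables ($\lambda\leftrightarrow\lambda'$, identify $\ell=\ell(\lambda')=\lambda_1=:L$).

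Concretely, I would first justify Step {\bf 1}: in \eqref{eq7.9}, letting $n,k\to\infty$ with $n-k=\ell$, the prefactor $\tfrac{(s^2;q)_n}{(s^2;q)_k}\prod_j\tfrac{1-sq^kv_j}{1-sv_j}$ tends to $\prod_j(1-sv_j)^{-1}$ (each finite $v_j$ makes $sq^kv_j\to0$, and $(s^2;q)_n/(s^2;q)_k\to 1$ since $|s|<1$), and the pole $u_i-sq^k$ in the denominator of the integrand becomes simply $u_i$; one must note the contour $\mathcal C$ still makes sense because it encircles $0$ and all $sq^i$. This reproduces the first displayed equation after ``Applying the first step.'' Next, Step {\bf 2} is the geometric specialization: substituting $\{v_{(\mathcal K_{i-1}+1)},\dots\}=\{s,qs,\dots,q^{K_i-1}s\}$, the double product $\prod_{i,j}\tfrac{1-qu_iv_j}{1-u_iv_j}$ telescopes within each block to $\prod_j\tfrac{1-su_iq^{K_j}}{1-su_i}$, exactly as in the telescoping computation used in the proof of \eqref{CB}; and $\prod_j(1-sv_j)^{-1}$ becomes $\prod_i (s^2q^{K_i};q)_\infty/(s^2;q)_\infty$ (again by the $q$-Pochhammer identity $\prod_{r=0}^{K_i-1}(1-s^2q^r)^{-1}=(s^2q^{K_i};q)_\infty/(s^2;q)_\infty$). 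Step {\bf 3}: replace $q^{K_i}\mapsto -x_i/s$; both sides are rational (and then, after the normalization, analytic) in each $q^{K_i}$, which is legitimate by the analytic-continuation argument already invoked for Corollary after \eqref{fused_yb_xy} — here the relevant series are manifestly analytic in the stated regime. This sends $\prod_i(s^2q^{K_i};q)_\infty\mapsto\prod_i(-sx_i;q)_\infty$ and $1-su_iq^{K_j}\mapsto 1+u_ix_j$. Finally Step {\bf 4}: divide by $\prod_i(-sx_i;q)_\infty/(s^2;q)_\infty$, which is exactly the prefactor produced, leaving the clean integrand; then absorb the stray $\prod_i(1-su_i)^{-1}$ together with the $\prod_j(1-su_i)^{-1}$ from Step {\bf 2} into the $(1-su_i)^{-(m+1)}$ in the statement, and rewrite $du_i/(2\pi\i)$ as $du_i/(2\pi\i\,u_i)$ times $u_i$ — in fact the $1/u_i$ is already present from Step {\bf 1}. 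Renaming $\lambda\to$ its conjugate so that the integration rank $\ell$ becomes $L=\lambda_1$ and the exponents $\lambda_i$ become $\lambda'_i$ yields precisely the claimed formula.

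The only genuinely delicate point — and the one I would spend the most care on — is Step {\bf 1}, the double limit $n,k\to\infty$: one must check that the explicit residue evaluations of the $u_n,\dots,u_{n-k+1}$ contours performed in \cite{Borodin} to pass from the unintegrated formula to \eqref{eq7.9} remain valid uniformly, i.e. that swapping ``take $n-k$ contour residues'' with ``let $k\to\infty$'' is harmless; equivalently, that the contour $\mathcal C$ can be chosen once and for all (enclosing $0$ and $\{sq^i:i\ge0\}$, excluding $s^{-1}$ and the $v_j^{-1}$) independent of how large $n,k$ get, and that the factor $(s^2q^k;q)_{\infty}$-type tails really do converge to $1$. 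This is routine given $|q|,|s|<1$ but should be stated. A secondary (minor) subtlety is confirming that the $\{K_j;s,\dots,s\}$-specialization is compatible with the contour hypothesis ``$\{v_j^{-1}\}$ lie outside $\mathcal C$'' — but $v_j=sq^r$ gives $v_j^{-1}=s^{-1}q^{-r}$, which lies outside $\mathcal C$ since it lies beyond $s^{-1}$; this is the content of the footnote in the excerpt. I would also remark, as the theorem statement itself does, that an independent route exists via the orthogonality of the $\F_\lambda$'s and the dual Cauchy identity \eqref{dual_cauchy}, which could serve as a cross-check but which I would not develop in detail.
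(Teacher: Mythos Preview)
Your proposal is correct and follows essentially the same approach as the paper: the paper's proof consists precisely of applying Steps {\bf 1}--{\bf 4} to the integral formula \eqref{eq7.9} for $\G_\lambda$, with the same telescoping computations and the same final renaming $\lambda\leftrightarrow\lambda'$, $\ell\to L$. Your identification of the delicate points (the $n,k\to\infty$ limit and the contour compatibility under the geometric specialization) matches the paper's own remarks, including the footnote you cite.
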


\bibliographystyle{alpha}
\bibliography{references}

\end{document}